\definecolor{LightCyan}{rgb}{0.70,1,1}
\newtheorem{definition}{Definition}
\newtheorem{theorem}{Theorem}
\newtheorem{lemma}{Lemma}
\newtheorem{remark}{Remark}
\newtheorem{proposition}{Proposition}
\theoremstyle{thmstyletwo} 
\renewcommand{\min}[1]{\mathchoice
  {\underset{#1}{\operatorname{min}}\,}%
  {\operatorname{min}_{#1}}%
  {\operatorname{min}_{#1}}%
  {\operatorname{min}_{#1}}%
}
\renewcommand{\max}[1]{\mathchoice
  {\underset{#1}{\operatorname{max}}\,}%
  {\operatorname{max}_{#1}}%
  {\operatorname{max}_{#1}}%
  {\operatorname{max}_{#1}}%
}
\newcommand{\argmin}[1]{\mathchoice
  {\operatorname{arg}\, \min{#1}}%
  {\operatorname{arg}\, \min{#1}}%
  {\operatorname{arg}\, \min{#1}}%
  {\operatorname{arg}\, \min{#1}}%
}
\newcommand{\norm}[1]{ \left\Vert {#1} \right\Vert}
\newcommand{\euclideannorm}[1]{\norm{{#1}}_2}
\newcommand{\evaluatedat}[1]{ \Biggr\lvert_{#1}}
\newcommand{\fixed}[1]{\tilde{#1}}
\newcommand{\utilfun}{u_{\scriptscriptstyle \succsim}} 
\newcommand{\preffun}{{\pi}_{\scriptscriptstyle \succsim}} 
\newcommand{\surrpreffun}[1]{{\hat{\pi}}_{{\scriptscriptstyle \succsim}_{#1}}} 
\newcommand{\quotes}[1]{``{#1}''}
\newcommand{\blue}[1]{{\textcolor{blue}{#1}}}
\newcommand{\qedwhite}{\null\nobreak\hfill\ensuremath{\square}}
\newcommand{\important}[1]{\emph{#1}}
\newcommand{\figname}{Fig.}
\newcommand{\RBF}{RBF} 
\newcommand{\GP}{$\mathcal{GP}$} 
\newcommand{\IDW}{IDW} 
\newcommand{\DM}{DM} 
\newcommand{\LHD}{LHD} 
\newcommand{\LOOCV}{LOOCV} 
\newcommand{\PBO}{PBO} 
\newcommand{\GLISpmethod}{\texttt{GLISp} \cite{Bemporad2021}}
\newcommand{\CGLISpmethod}{\texttt{C-GLISp} \cite{zhu2021c}}
\newcommand{\GLISpCGLISpmethods}{\texttt{GLISp}/\texttt{C-GLISp} \cite{Bemporad2021,zhu2021c}}
\newcommand{\GLISprmethod}{\texttt{GLISp-r}}
\newcommand{\MSRSmethod}{\texttt{MSRS} \cite{regis2007stochastic}}
\newcommand{\SOSAmethod}{\texttt{SO-SA} \cite{wang2014general}}
\newcommand{\GutmannRBF}{\texttt{Gutmann-RBF} \cite{gutmann2001radial}}
\newcommand{\CORSmethod}{\texttt{CORS} \cite{regis2005constrained}}
\newcommand{\bemporadGOP}{\texttt{bemporad} \cite{Bemporad2020}}
\newcommand{\gramacyandleeGOP}{\texttt{gramacy-lee} \cite{gramacy2012cases}}
\newcommand{\ackleyGOP}{\texttt{ackley} \cite{jamil2013literature}}
\newcommand{\bukinsixGOP}{\texttt{bukin 6} \cite{jamil2013literature}}
\newcommand{\levythirteenGOP}{\texttt{levy 13} \cite{mishra2006some}}
\newcommand{\adjimanGOP}{\texttt{adjiman} \cite{jamil2013literature}}
\newcommand{\rosenbrockGOP}{\texttt{rosenbrock} \cite{jamil2013literature}}
\newcommand{\steptwoGOP}{\texttt{step 2} \cite{jamil2013literature}}
\newcommand{\salomonGOP}{\texttt{salomon} \cite{jamil2013literature}}
\newenvironment{algparams}{\begin{enumerate*}[(i), nosep]}{\end{enumerate*}}
\newcounter{algsubstate}
\renewcommand{\thealgsubstate}{\alph{algsubstate}}
\algnewcommand{\IfThenElse}[3]{
  \State \algorithmicif\ #1\ \algorithmicthen\ #2\ \algorithmicelse\ #3}
\algnewcommand{\IfThen}[2]{
  \State \algorithmicif\ #1\ \algorithmicthen\ #2}
\newcommand{\setalgorithmstretch}{\setstretch{1}} 
\newcommand{\setalgorithmfontsize}{\small}
\newcommand{\settablestretch}{\setstretch{2}}
\newcommand{\settablefontsize}{\footnotesize}
\begin{document}

\title{\texttt{GLISp-r}: a preference-based optimization algorithm with convergence guarantees}

\author{
    Davide Previtali\\
    Department of Management, Information and Production Engineering\\
    University of Bergamo (Via G. Marconi 5, 24044, Dalmine (BG), Italy)\\
    \texttt{davide.previtali@unibg.it}\\
    \And
    Mirko Mazzoleni\\
    Department of Management, Information and Production Engineering\\
    University of Bergamo (Via G. Marconi 5, 24044, Dalmine (BG), Italy)\\
    \texttt{mirko.mazzoleni@unibg.it}\\
    \And
    Antonio Ferramosca\\
    Department of Management, Information and Production Engineering\\
    University of Bergamo (Via G. Marconi 5, 24044, Dalmine (BG), Italy)\\
    \texttt{antonio.ferramosca@unibg.it}\\
    \And
    Fabio Previdi\\
    Department of Management, Information and Production Engineering\\
    University of Bergamo (Via G. Marconi 5, 24044, Dalmine (BG), Italy)\\
    \texttt{fabio.previdi@unibg.it}\\
}

\date{}

\maketitle

\begin{abstract}
    Preference-based optimization algorithms are iterative procedures that seek the optimal calibration of a decision vector based only on comparisons between couples of different tunings. At each iteration, a human decision-maker expresses a preference between two calibrations (samples), highlighting which one, if any, is better than the other. The optimization procedure must use the observed preferences to find the tuning of the decision vector that is most preferred by the  decision-maker, while also minimizing the number of comparisons. In this work, we formulate the preference-based optimization problem from a utility theory perspective. Then, we propose \texttt{GLISp-r}, an extension of a recent preference-based optimization procedure called \texttt{GLISp}. The latter uses a Radial Basis Function surrogate to describe the tastes of the decision-maker. Iteratively, \texttt{GLISp} proposes new samples to compare with the best calibration available by trading off exploitation of the surrogate model and exploration of the decision space. In \texttt{GLISp-r}, we propose a different criterion to use when looking for new candidate samples that is inspired by \texttt{MSRS}, a popular procedure in the black-box optimization framework.  
    Compared to \texttt{GLISp}, \texttt{GLISp-r} is less likely to get stuck on local optima of the preference-based optimization problem. We motivate this claim theoretically, with a proof of global convergence, and empirically, by comparing the performances of \texttt{GLISp} and \texttt{GLISp-r} on several benchmark optimization problems.
\end{abstract}

\keywords{
    Global optimization, Preference-based optimization, Surrogate-based methods, Active preference learning, Utility theory
}

\section{Introduction}
\label{sec:Introduction}
\important{Preference-Based Optimization (\PBO{})} algorithms seek the global solution of an optimization problem whose objective function is unknown and unmeasurable. Instead, the \quotes{goodness} of a 
\important{decision vector} is assessed by a human \important{Decision-Maker (\DM{})}, who compares different samples (i.e. tunings of the decision vector) and states which of them he/she prefers. 
In this work, we 
consider \important{queries} to the \DM{} that involve two options. The output of a query is the preference between the two calibrations of the decision vector expressed by the decision-maker (e.g. \quotes{this tuning is better than that one}) \cite{Bemporad2021}.

Preference-based optimization is closely related to industry practice. 
In the context of control systems, often the performances achieved by a regulator tuning are evaluated by a decision-maker, 
who expresses his/her judgment 
by observing the behavior of the system under control. Multiple experiments must be performed until the \DM{} is satisfied by the closed-loop performances. If a trial-and-error approach is adopted, then there is no guarantee that the final tuning is optimal in some sense. Moreover, the calibration process might be quite time-consuming since possibly many combinations of the controller parameters need to be tested. 
\PBO{} algorithms constitute a better alternative to the trial-and-error methodology 
due to the fact that they drive the experiments by exploiting the information carried by the preferences expressed by the \DM{}. The goal is to \important{seek the calibration of the decision vector that is most preferred by the decision-maker while also minimizing the number of queries}, thus performing fewer experiments. Successful applications of preference-based optimization procedures include \cite{zhu2020preference}, where the authors use algorithm \GLISpmethod{} to tune the controller for a continuous stirring tank reactor and for autonomous driving vehicles. The same algorithm has been employed in \cite{roveda2021pairwise} to calibrate the parameters of a velocity planner for robotic sealing tasks. 

Although preference-based optimization is a valuable tool for solving those calibration tasks that involve human decision-makers, its applicability is broader. In multi-objective optimization, \PBO{} can be employed to scalarize the multi-objective optimization problem into a single objective one by tuning the weights for the weighted sum method, or to choose which, among a set of Pareto optimal solutions, is the most suited for the \DM{} \cite{Bemporad2021, liu2002multiobjective}. \PBO{} can also be used to perform active preference learning. In general, preference learning methods estimate a predictive model that describes the tastes of a human decision-maker \cite{furnkranz2010preference}. Preference-based optimization methods also rely on a predictive model (called surrogate model), which is updated after each query to the \DM{}, 
but its prediction accuracy is not the main concern. Rather, the sole purpose of the surrogate model is to drive the search towards the most preferred tuning of the decision vector. 

The preference-based optimization problem can be formalized by applying notions of 
\important{utility theory} \cite{ok2011real}. Suppose that there exists a binary relation, called the \important{preference relation}, which describes the tastes of the decision-maker (i.e. the outputs of the queries). If the preference relation of the \DM{} 
exhibits certain properties, then it is possible to represent it with a continuous (latent) \important{utility function}\footnote{As a matter of fact, the goal of some preference learning methods is to estimate the utility function of the decision-maker \cite{furnkranz2010preference}.} \cite{debreu1971theory}, which assigns an abstract degree of \quotes{goodness} to all possible calibrations of the decision vector. 
The tuning that is most preferred by the decision-maker is the one with the highest utility and corresponds to the optimizer of the \PBO{} problem.

As previously mentioned, many preference-based optimization algorithms rely on a \important{surrogate model}, 
which is an approximation of the latent utility function built from the preference information at hand. In general, any procedure that uses a surrogate model when solving an optimization problem is said to be a \important{surrogate-based} (or \important{response surface}) \important{method}. These algorithms are mostly used for black-box optimization problems, where the objective function is unknown but measurable (see \cite{vu2017surrogate,jones2001taxonomy}). 
Nevertheless, surrogate-based methods can also be employed for \PBO{} problems, provided that the surrogate model is properly defined. In practice, preference-based response surface methods iteratively propose new samples to be compared to the available best candidate by trading off \important{exploitation} (or local search), i.e. selecting samples that are likely to offer an improvement based on the data at our disposal, and \important{exploration} (or global search), i.e. finding samples in regions of the decision space of which we have little to no knowledge. 
Typically, new candidate samples are sought by minimizing or maximizing an \important{acquisition function} that encapsulates these two aspects.

Most surrogate models either rely on Gaussian Processes (\GP{}s) \cite{rasmussen2006gaussian}, giving rise to preferential Bayesian optimization \cite{gonzalez2017preferential}, or Radial Basis Functions (\RBF{}s) \cite{gutmann2001radial}. For example, in \cite{chu2005preference} the authors propose a predictive model for the latent utility function that is based on \GP{}s. 
The latter is used as a surrogate model in \cite{brochu2007active} to carry out preference-based optimization. Alternative preferential Bayesian optimization algorithms are proposed in \cite{benavoli2021preferential} and in \cite{gonzalez2017preferential}. Recently, in \cite{Bemporad2021} the authors developed a preference-based optimization method, called \texttt{GLISp}, which is based on a \RBF{} approximation of the latent utility function.

In this work, \important{we propose an extension of the \GLISpmethod{} algorithm (that we will refer to as \GLISprmethod{}) which is more robust in finding the global solution of the preference-based optimization problem}. To do so, we:
\begin{enumerate}
      \item Address some limitations of the acquisition function used in \cite{Bemporad2021}, which can cause the procedure to miss the global solution and get stuck on local optima;
      \item Dynamically vary the exploration-exploitation trade-off weight, allowing \GLISprmethod{} to alternate between local and global search. This is commonly done in the black-box optimization framework, see for example \cite{gutmann2001radial,regis2007stochastic,wang2014general,regis2005constrained}, but it has not been tried for \GLISpmethod{};
      \item Provide a \important{proof of global convergence} for \GLISprmethod{}, furtherly motivating its robustness. Currently, no such proof is available for \GLISpmethod{} (or for any of the most popular preference-based response surface methods, 
            cf. also \cite{brochu2007active,gonzalez2017preferential,benavoli2021preferential}).
\end{enumerate}
Regarding this last point, as a minor contribution of this work, we formalize the preference-based optimization problem from a utility theory perspective, allowing us to analyze the existence of a solution and, ultimately, to prove the global convergence of \GLISprmethod{}.

This paper is organized as follows. In Section \ref{sec:Problem_formulation}, we introduce the preference-based optimization problem. Section \ref{sec:Handling_exploration_and_exploitation} addresses how to build the surrogate model (as in \GLISpmethod{}) and look for the next sample to evaluate, keeping in mind the exploration-exploitation dilemma. We also briefly cover the exploration function used in \cite{Bemporad2021}. The latter is thoroughly analyzed in Section \ref{sec:Next_candidate_sample_search}, where we propose a solution to the shortcomings that we have encountered in \GLISpmethod{}. Section \ref{sec:GLISp-r_and_convergence} describes algorithm \GLISprmethod{} and addresses its convergence. Then, Section \ref{sec:Empirical_results} compares the performances of \GLISprmethod{} with \GLISpmethod{} and \CGLISpmethod{}, a revisitation of the latter method by the same authors, on several benchmark optimization problems. Finally, Section \ref{sec:Discussion} gives some concluding remarks.
\section{Problem formulation}
\label{sec:Problem_formulation}
Let $\boldsymbol{x} \in \mathbb{R}^n, \boldsymbol{x} = \begin{bmatrix}x^{(1)} & \ldots & x^{(n)}\end{bmatrix}^{\top},$ be the $n$-dimensional decision vector and suppose that we are interested in finding its calibration that is most preferred by the decision-maker within a subset of $\mathbb{R}^{n}$, namely $\Omega \subset \mathbb{R}^{n}$. In particular, we define the \important{constraint set} $\Omega$ as:
\begin{equation}
    \label{eq:constraint_set_Omega}
    \Omega=\left\{\boldsymbol{x}: \boldsymbol{l}\leq\boldsymbol{x}\leq\boldsymbol{u}, \boldsymbol{g_{ineq}}(\boldsymbol{x})\leq\boldsymbol{0}_{q_{ineq}}, \boldsymbol{g_{eq}}(\boldsymbol{x})=\boldsymbol{0}_{q_{eq}}\right\}.
\end{equation}
In \eqref{eq:constraint_set_Omega}, $\boldsymbol{l}, \boldsymbol{u} \in \mathbb{R}^{n}, \boldsymbol{l} \leq \boldsymbol{u},$ are the lower and upper bounds on the decision vector while $\boldsymbol{g_{ineq}}: \mathbb{R}^{n} \to \mathbb{R}^{q_{ineq}}$ and $\boldsymbol{g_{eq}}: \mathbb{R}^{n} \to \mathbb{R}^{q_{eq}}$ are the constraints functions associated to the inequality and equality constraints respectively (which are $q_{ineq} \in \mathbb{N} \cup \left\{ 0 \right\}$ and $q_{eq} \in \mathbb{N} \cup \left\{ 0 \right\}$). Notation-wise, $\boldsymbol{0}_{q_{ineq}}$ represents the $q_{ineq}$-dimensional zero column vector (and similarly for $\boldsymbol{0}_{q_{eq}}$). We suppose that: (i) all of the constraints in \eqref{eq:constraint_set_Omega} are completely known and (ii) $\Omega$ includes, at least, the bound constraints $\boldsymbol{l}\leq\boldsymbol{x}\leq\boldsymbol{u}$ (the remaining equality and inequality constraints can be omitted, resulting in $q_{ineq} = q_{eq} = 0$).

In this work, we formalize the preference-based optimization problem from a \important{utility theory} \cite{ok2011real} perspective. We start by introducing preference relations, a particular kind of binary relations that play a key role in \PBO{}.
\begin{definition}[Binary relation \cite{ok2011real}]
    \label{def:binary_relation}
    Consider the constraint set $\Omega$ in \eqref{eq:constraint_set_Omega}; we define a generic binary relation $\mathcal{R}$ on $\Omega$ as a subset $\mathcal{R} \subseteq \Omega \times \Omega$.
\end{definition}
Notation-wise, given two samples $\boldsymbol{x}_{i}, \boldsymbol{x}_{j} \in \Omega$, we denote the ordered pairs for which the binary relation holds, $\left(\boldsymbol{x}_{i}, \boldsymbol{x}_{j}\right) \in \mathcal{R}$, as $\boldsymbol{x}_{i} \mathcal{R} \boldsymbol{x}_{j}$.
\begin{definition}[Preference relation \cite{ok2011real}]
    \label{def:preference_relation}
    A preference relation, $\succsim \subseteq \Omega \times \Omega$, is a binary relation that describes the tastes of a human  decision-maker.
\end{definition}

In the context of utility theory, $\boldsymbol{x}_{i} \succsim \boldsymbol{x}_{j}$ implies that the \DM{} with preference relation $\succsim$ on $\Omega$ deems the alternative $\boldsymbol{x}_{i}$ at least as good as $\boldsymbol{x}_{j}$. We say that the decision-maker is rational (in an economics sense) if his/her preference relation exhibits certain properties, as highlighted by the following Definition.
\begin{definition}[Rational decision-maker \cite{ok2011real}]
    \label{def:rational_decision_maker}
    Consider a decision-maker with preference relation $\succsim$ on $\Omega$. We say that the \DM{} is rational if $\succsim$ is a reflexive, transitive and complete binary relation on $\Omega$.
\end{definition}
Now, we briefly review each of the aforementioned properties and give some insights as to why they characterize the rationality of an individual (see \cite{debreu1971theory,ok2011real,feldman2006welfare} for more details):
\begin{itemize}
    \item \important{Reflexivity} of $\succsim$ on $\Omega$ implies that, for the \DM{}, any alternative is as good as itself, i.e. $\boldsymbol{x}_{i} \succsim \boldsymbol{x}_{i}$ for each $\boldsymbol{x}_{i} \in \Omega$;
    \item A decision-maker whose preference relation $\succsim$ on $\Omega$ is \important{transitive} is able to express his/her preferences coherently since if $\boldsymbol{x}_{i} \succsim \boldsymbol{x}_{j}$ and $\boldsymbol{x}_{j} \succsim \boldsymbol{x}_{k}$ hold, then $\boldsymbol{x}_{i} \succsim \boldsymbol{x}_{k}$, for any $\boldsymbol{x}_{i}, \boldsymbol{x}_{j}, \boldsymbol{x}_{k} \in \Omega$;
    \item \important{Completeness} of $\succsim$ on $\Omega$ implies that the \DM{} is able to express a preference between any two alternatives in $\Omega$, i.e. either $\boldsymbol{x}_{i} \succsim \boldsymbol{x}_{j}$ or $\boldsymbol{x}_{j} \succsim \boldsymbol{x}_{i}$ holds for each $\boldsymbol{x}_{i}, \boldsymbol{x}_{j} \in \Omega$.
\end{itemize}
The preference relation $\succsim$ on $\Omega$ of a rational decision-maker is usually \quotes{split} into two transitive binary relations \cite{ok2011real}:
\begin{itemize}
    \item The \important{strict preference relation} $\succ$ on $\Omega$, i.e. $\boldsymbol{x}_{i} \succ \boldsymbol{x}_{j}$ if and only if $\boldsymbol{x}_{i} \succsim \boldsymbol{x}_{j}$ but not $\boldsymbol{x}_{j} \succsim \boldsymbol{x}_{i}$ ($\boldsymbol{x}_{i}$ is \quotes{better than} $\boldsymbol{x}_{j}$ or, equivalently, $\boldsymbol{x}_j$ is \quotes{worse than} $\boldsymbol{x}_i$), and
    \item The \important{indifference relation} $\sim$ on $\Omega$, i.e. $\boldsymbol{x}_{i} \sim \boldsymbol{x}_{j}$ if and only if $\boldsymbol{x}_{i} \succsim \boldsymbol{x}_{j}$ and $\boldsymbol{x}_{j} \succsim \boldsymbol{x}_{i}$ ($\boldsymbol{x}_{i}$ is \quotes{as good as} $\boldsymbol{x}_{j}$).
\end{itemize}
One last relevant property for preference relations is continuity.
\begin{definition}[Continuous preference relation \cite{ok2011real}]
    \label{def:continuous_preference_relation}
    A preference relation $\succsim$ on $\Omega$ is continuous if the strict upper and lower $\succsim$-contour sets:
    \begin{align*}
        \mathcal{U}_{\scriptscriptstyle \succ}\left(\boldsymbol{x}\right) & = \left\{\fixed{\boldsymbol{x}}: \fixed{\boldsymbol{x}} \in \Omega, \fixed{\boldsymbol{x}} \succ \boldsymbol{x}\right\} \quad \text{and}           \\
        \mathcal{L}_{\scriptscriptstyle \succ}\left(\boldsymbol{x}\right) & = \left\{\fixed{\boldsymbol{x}}: \fixed{\boldsymbol{x}} \in \Omega, \boldsymbol{x} \succ \fixed{\boldsymbol{x}}\right\} \quad \text{respectively},
    \end{align*}
    are open subsets of $\Omega$ for each $\boldsymbol{x} \in \Omega$.
\end{definition}
Intuitively speaking, if $\succsim$ on $\Omega$ is continuous and $\boldsymbol{x}_i \succ \boldsymbol{x}_j$ holds, then an alternative $\boldsymbol{x}_k$ which is \quotes{very close} to $\boldsymbol{x}_j$ should also be deemed strictly worse than $\boldsymbol{x}_i$ by the decision-maker, i.e. $\boldsymbol{x}_i \succ \boldsymbol{x}_k$.

Having defined the preference relation $\succsim$ on $\Omega$ thoroughly, we can finally state the goal of preference-based optimization:
\begin{equation}
    \label{eq:goal_preference-based_optimization}
    find\ \boldsymbol{x}^{\boldsymbol{*}} \in \Omega\ such\ that\  \boldsymbol{x^{*}}\succsim\boldsymbol{x},\forall\boldsymbol{x}\in\Omega.
\end{equation}
Formally, $\boldsymbol{x^{*}}$ is called the \important{$\succsim$-maximum of $\Omega$} \cite{ok2011real}, i.e. the sample that is most preferred by the decision-maker with preference relation $\succsim$ on $\Omega$. Concerning the existence of $\boldsymbol{x}^*$, we can state the following Proposition, which can be seen as a generalization of the Extreme Value Theorem \cite{audet2017derivative} for preference relations.
\begin{proposition}[Existence of a $\succsim$-maximum of $\Omega$ \cite{ok2011real}]
    \label{prop:existence_of_preference_relation_maximum}
    A $\succsim$-maximum of $\Omega$ is guaranteed to exist if $\Omega$ is a compact subset of a metric space (in our case $\Omega \subset \mathbb{R}^{n}$) and $\succsim$ is a continuous preference relation on $\Omega$ of a rational decision-maker (see Definition \ref{def:rational_decision_maker} and Definition \ref{def:continuous_preference_relation}).
\end{proposition}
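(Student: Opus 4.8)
The plan is to argue by contradiction, turning the absence of a $\succsim$-maximum into an open cover of $\Omega$ and then exploiting compactness, completeness and transitivity. First note that $\Omega$ is nonempty, since by assumption it contains at least the box $\boldsymbol{l} \leq \boldsymbol{x} \leq \boldsymbol{u}$ with $\boldsymbol{l} \leq \boldsymbol{u}$. Suppose, for the sake of contradiction, that $\Omega$ admits no $\succsim$-maximum. Then for every $\boldsymbol{x} \in \Omega$ there must exist some $\fixed{\boldsymbol{x}} \in \Omega$ with $\fixed{\boldsymbol{x}} \succ \boldsymbol{x}$: indeed, if no such $\fixed{\boldsymbol{x}}$ existed, then by completeness of $\succsim$ we would have $\boldsymbol{x} \succsim \fixed{\boldsymbol{x}}$ for every $\fixed{\boldsymbol{x}} \in \Omega$, making $\boldsymbol{x}$ a $\succsim$-maximum, a contradiction. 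This is exactly the step where the rationality hypothesis (specifically completeness) is used.

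Next I would translate this into an open cover. Since $\fixed{\boldsymbol{x}} \succ \boldsymbol{x}$ is equivalent to $\boldsymbol{x} \in \mathcal{L}_{\scriptscriptstyle \succ}(\fixed{\boldsymbol{x}})$, the previous step shows that the family $\{\mathcal{L}_{\scriptscriptstyle \succ}(\fixed{\boldsymbol{x}})\}_{\fixed{\boldsymbol{x}} \in \Omega}$ covers $\Omega$; by Definition \ref{def:continuous_preference_relation}, each $\mathcal{L}_{\scriptscriptstyle \succ}(\fixed{\boldsymbol{x}})$ is an open subset of $\Omega$. Compactness of $\Omega$ then yields finitely many points $\fixed{\boldsymbol{x}}_1, \ldots, \fixed{\boldsymbol{x}}_m \in \Omega$ with $\Omega = \bigcup_{k=1}^{m} \mathcal{L}_{\scriptscriptstyle \succ}(\fixed{\boldsymbol{x}}_k)$.

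The final step derives the contradiction from this finite subcover. Using completeness and transitivity of $\succsim$, a finite nonempty subset of $\Omega$ always has a $\succsim$-maximum (a routine induction on $m$: compare the running best with the next element via completeness, and keep the better of the two via transitivity), so let $\fixed{\boldsymbol{x}}_j$ satisfy $\fixed{\boldsymbol{x}}_j \succsim \fixed{\boldsymbol{x}}_k$ for all $k \in \{1, \ldots, m\}$. Since $\fixed{\boldsymbol{x}}_j \in \Omega$ and the $\mathcal{L}_{\scriptscriptstyle \succ}(\fixed{\boldsymbol{x}}_k)$ cover $\Omega$, there is an index $k$ with $\fixed{\boldsymbol{x}}_j \in \mathcal{L}_{\scriptscriptstyle \succ}(\fixed{\boldsymbol{x}}_k)$, i.e. $\fixed{\boldsymbol{x}}_k \succ \fixed{\boldsymbol{x}}_j$. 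But $\fixed{\boldsymbol{x}}_j \succsim \fixed{\boldsymbol{x}}_k$, which by the very definition of $\succ$ (namely, $\fixed{\boldsymbol{x}}_k \succ \fixed{\boldsymbol{x}}_j$ requires that \emph{not} $\fixed{\boldsymbol{x}}_j \succsim \fixed{\boldsymbol{x}}_k$) gives the desired contradiction. Hence a $\succsim$-maximum of $\Omega$ exists.

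I expect no deep difficulty here; the only delicate points are bookkeeping ones: carefully justifying that the absence of a maximum forces a strictly-better alternative at every point, and verifying that a finite set admits a $\succsim$-maximum (where transitivity, and implicitly reflexivity, enter). An alternative, equally valid route would first invoke Debreu's representation theorem \cite{debreu1971theory} to obtain a continuous utility function $\utilfun : \Omega \to \mathbb{R}$ with $\utilfun(\boldsymbol{x}_i) \geq \utilfun(\boldsymbol{x}_j)$ if and only if $\boldsymbol{x}_i \succsim \boldsymbol{x}_j$ — legitimate because $\Omega \subset \mathbb{R}^n$ is separable and $\succsim$ is a continuous rational preference relation — and then apply the Extreme Value Theorem to $\utilfun$ over the compact set $\Omega$. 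I would nonetheless prefer the open-cover argument above, as it is self-contained and avoids the representation machinery.
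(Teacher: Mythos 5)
Your open-cover argument is correct and complete. Note, however, that the paper does not actually prove Proposition \ref{prop:existence_of_preference_relation_maximum}: it is stated as a cited result from the utility-theory literature \cite{ok2011real}, and the surrounding text (which frames it as ``a generalization of the Extreme Value Theorem'') together with the placement of Theorem \ref{theo:debreu_utility_representation} suggests the authors have in mind precisely the second route you sketch --- represent $\succsim$ by a continuous $\utilfun$ via Debreu's theorem and apply the Extreme Value Theorem on the compact set $\Omega$. Your direct proof is the standard one and arguably preferable here: it is self-contained, uses only the definitions the paper actually states (completeness to force a strictly better alternative at every non-maximal point, openness of the lower contour sets $\mathcal{L}_{\scriptscriptstyle \succ}(\fixed{\boldsymbol{x}})$ to get an open cover, compactness for the finite subcover, and transitivity plus completeness to extract a maximum of the finite subfamily and reach the contradiction), and it avoids invoking the representation machinery, which is a strictly stronger result. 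The one small slip is your justification of nonemptiness: the paper's assumption is that the \emph{definition} of $\Omega$ always includes the bound constraints, so $\Omega \subseteq \left\{\boldsymbol{x}: \boldsymbol{l}\leq\boldsymbol{x}\leq\boldsymbol{u}\right\}$ rather than the reverse, and $\Omega$ could in principle be empty if the additional constraints are infeasible; nonemptiness is an implicit standing assumption of the optimization problem rather than a consequence of $\boldsymbol{l}\leq\boldsymbol{u}$. This does not affect the substance of the argument.
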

Proposition \ref{prop:existence_of_preference_relation_maximum} will be relevant when proving the convergence of the proposed algorithm,  in Section \ref{sec:GLISp-r_and_convergence}. Lastly, in order to write Problem \eqref{eq:goal_preference-based_optimization} as a  typical global optimization problem, we need to state one of the most important results in utility theory.
\begin{theorem}[Debreu's utility representation Theorem for $\mathbb{R}^{n}$ \cite{debreu1971theory}]
    \label{theo:debreu_utility_representation}
    Let $\Omega$ be any nonempty subset of $\mathbb{R}^{n}$ and $\succsim$ be a preference relation on $\Omega$ of a rational decision-maker (as in Definition \ref{def:rational_decision_maker}). If $\succsim$ on $\Omega$ is continuous, then it can be represented by a continuous utility function $\utilfun:\Omega \to \mathbb{R}$ such that, for any $\boldsymbol{x}_i, \boldsymbol{x}_j \in \Omega$:
    \begin{align*}
        \begin{split}
            \boldsymbol{x}_i \succsim \boldsymbol{x}_j \quad&\text{if and only if} \quad
            \utilfun\left(\boldsymbol{x}_i\right) \geq \utilfun\left(\boldsymbol{x}_j\right), \\
            \boldsymbol{x}_i \succ \boldsymbol{x}_j \quad &\text{if and only if} \quad
            \utilfun\left(\boldsymbol{x}_i\right) > \utilfun\left(\boldsymbol{x}_j\right), \\
            \boldsymbol{x}_i \sim \boldsymbol{x}_j \quad &\text{if and only if} \quad
            \utilfun\left(\boldsymbol{x}_i\right) = \utilfun\left(\boldsymbol{x}_j\right).
        \end{split}
    \end{align*}
\end{theorem}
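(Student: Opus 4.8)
The plan is to establish just the single equivalence $\boldsymbol{x}_i \succsim \boldsymbol{x}_j \iff \utilfun(\boldsymbol{x}_i) \geq \utilfun(\boldsymbol{x}_j)$ for a continuous $\utilfun$; the other two equivalences are then automatic. Indeed, by the definition of the strict preference relation, $\boldsymbol{x}_i \succ \boldsymbol{x}_j$ means $\boldsymbol{x}_i \succsim \boldsymbol{x}_j$ and not $\boldsymbol{x}_j \succsim \boldsymbol{x}_i$, i.e.\ $\utilfun(\boldsymbol{x}_i) \geq \utilfun(\boldsymbol{x}_j)$ and not $\utilfun(\boldsymbol{x}_j) \geq \utilfun(\boldsymbol{x}_i)$, which --- since $\geq$ on $\mathbb{R}$ is complete --- is $\utilfun(\boldsymbol{x}_i) > \utilfun(\boldsymbol{x}_j)$; and $\boldsymbol{x}_i \sim \boldsymbol{x}_j$ unfolds to $\utilfun(\boldsymbol{x}_i) = \utilfun(\boldsymbol{x}_j)$ in the same way. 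So everything reduces to constructing one continuous, order-preserving $\utilfun : \Omega \to \mathbb{R}$.

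For that I would use the classical three-stage argument. \emph{Stage 1 (a countable order-dense set).} Since $\mathbb{R}^{n}$ is second countable, so is $\Omega$ with the subspace topology; fix a countable dense $E \subseteq \Omega$. Using the continuity of $\succsim$ (Definition \ref{def:continuous_preference_relation}), I would build a countable $D = \{\boldsymbol{d}_1, \boldsymbol{d}_2, \ldots\} \subseteq \Omega$ that is order-dense for $\succsim$, meaning that whenever $\boldsymbol{x} \succ \boldsymbol{y}$ there is $\boldsymbol{d}_k \in D$ with $\boldsymbol{x} \succsim \boldsymbol{d}_k \succsim \boldsymbol{y}$: if some $\boldsymbol{z}$ satisfies $\boldsymbol{x} \succ \boldsymbol{z} \succ \boldsymbol{y}$, then $\mathcal{U}_{\scriptscriptstyle \succ}(\boldsymbol{y}) \cap \mathcal{L}_{\scriptscriptstyle \succ}(\boldsymbol{x})$ is a nonempty open set, hence meets $E$ and yields such a $\boldsymbol{d}_k$; the remaining ``jumps'' (pairs $\boldsymbol{x} \succ \boldsymbol{y}$ with nothing strictly in between) are only countably many by second countability, and a representative of each is added to $D$ together with $E$. \emph{Stage 2 (a representation).} Define $v : \Omega \to \mathbb{R}$ by $v(\boldsymbol{x}) = \sum_{k \,:\, \boldsymbol{x} \succsim \boldsymbol{d}_k} 2^{-k}$ (with a minor refinement separating $\succsim$ from $\succ$ at each $\boldsymbol{d}_k$ so as to register jumps). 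Transitivity and completeness make $v$ nondecreasing along $\succsim$, and order-density of $D$ supplies the converse implication, so $v$ represents $\succsim$ ordinally --- though it need not be continuous, since its range may contain jumps/gaps.

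\emph{Stage 3 (continuity --- the crux).} The function $v$ need not be continuous, because its range may contain ``gaps''; Debreu's gap lemma (\cite{debreu1971theory,ok2011real}) supplies a strictly increasing map $g$ on $v(\Omega)$ such that, using the continuity of $\succsim$, $\utilfun := g \circ v$ is continuous while still representing $\succsim$ ordinally. Concretely, for each $c \in \mathbb{R}$ the set $\{\boldsymbol{x} \in \Omega : \utilfun(\boldsymbol{x}) > c\}$ equals $\Omega$, $\emptyset$, or a strict upper contour set $\mathcal{U}_{\scriptscriptstyle \succ}(\boldsymbol{z})$ for some $\boldsymbol{z} \in \Omega$, and $\{\boldsymbol{x} \in \Omega : \utilfun(\boldsymbol{x}) < c\}$ likewise equals $\Omega$, $\emptyset$, or a strict lower contour set $\mathcal{L}_{\scriptscriptstyle \succ}(\boldsymbol{z})$ --- the role of $g$ being precisely to rule out the ``endpoint attained or not'' pathologies that would otherwise break this identification --- and all these sets are open by Definition \ref{def:continuous_preference_relation}. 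Since the open rays $(c, \infty)$ and $(-\infty, c)$ form a subbasis for the topology of $\mathbb{R}$, $\utilfun$ is continuous, which completes the proof.

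The genuinely hard part is Stage 3 together with the jump/gap bookkeeping in Stages 1--2: rationality alone (reflexivity, transitivity, completeness; Definition \ref{def:rational_decision_maker}) yields at best a discontinuous representation, and it is the continuity of $\succsim$, via the openness of the contour sets, that both delivers the countable order-dense set for free (because $\Omega \subseteq \mathbb{R}^{n}$ is second countable) and forces the range of $v$ to have only removable gaps. As this theorem enters the paper only as a structural fact underpinning the reformulation of Problem \eqref{eq:goal_preference-based_optimization}, in practice one simply cites \cite{debreu1971theory,ok2011real} rather than reproducing the argument.
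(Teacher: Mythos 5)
The paper does not prove this theorem: it is quoted verbatim from \cite{debreu1971theory} (see also \cite{ok2011real}), and your three-stage sketch --- countable $\succsim$-order-dense set via second countability of $\Omega \subseteq \mathbb{R}^{n}$, dyadic-sum ordinal representation, then Debreu's gap lemma combined with openness of the strict contour sets to secure continuity --- is precisely the classical argument given in that reference, and your reduction of the second and third equivalences to the first is also correct. The one step you wave at that genuinely needs care is the countability of the jumps: it follows because distinct jumps give pairwise disjoint nonempty open order-intervals and a second countable space satisfies the countable chain condition, but as the paper only uses the theorem as a cited structural fact, deferring to \cite{debreu1971theory} is exactly what the authors do.
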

Using Theorem \ref{theo:debreu_utility_representation}, we can build an optimization problem to find the $\succsim$-maximum of $\Omega$. In particular, we define the \important{scoring function}, $f: \mathbb{R}^{n} \to \mathbb{R}$, as $f\left(\boldsymbol{x}\right) = - \utilfun\left(\boldsymbol{x}\right)$ and re-write Problem \eqref{eq:goal_preference-based_optimization} as:
\begin{align}
    \label{eq:preference-based_optimization_problem}
    \boldsymbol{x}^{\boldsymbol{*}} & = \argmin{\boldsymbol{x}} f(\boldsymbol{x}) \\
    \text{s.t.}                     & \quad\boldsymbol{x}\in\Omega. \nonumber
\end{align}
\begin{remark}
    \label{rem:multiple_global_minimizers}
    Formally, there could be more than one $\succsim$-maximum of $\Omega$, i.e. Problem \eqref{eq:preference-based_optimization_problem} could admit multiple global solutions, as described by the set:
    \begin{equation*}
        \mathcal{X}^* = \left\{\boldsymbol{x}^{\boldsymbol{*}}_i: \boldsymbol{x}^{\boldsymbol{*}}_i \in \Omega \text{\ such that \ }  \nexists \boldsymbol{x}: \boldsymbol{x} \succ \boldsymbol{x}^{\boldsymbol{*}}_i \right\}.
    \end{equation*}
    In this work, without loss of generality, we assume that there exists only one global solution $\boldsymbol{x}^{\boldsymbol{*}}$ as in \eqref{eq:preference-based_optimization_problem}. In practice, as we will see in Section \ref{sec:GLISp-r_and_convergence}, any globally convergent preference-based optimization procedure generates a set of samples that is dense in $\Omega$ and thus, at least asymptotically, it actually finds all the global minimizers in $\mathcal{X}^*$. We do not make any assumptions on the local solutions of \eqref{eq:preference-based_optimization_problem}, which can be more than one.
\end{remark}
On a side note, we could view preference-based optimization as a particular instance of black-box optimization \cite{vu2017surrogate,jones2001taxonomy} where the cost function is not measurable in any way. Instead, information on $f(\boldsymbol{x})$ in \eqref{eq:preference-based_optimization_problem} comes in the form of preferences, as described in the next Section.

\subsection{Data available to preference-based optimization procedures}
\label{subsec:available_data_PBO}
In \GLISpmethod{}, instead of considering the preference relation $\succsim$ on $\Omega$ explicitly, the authors describe the outputs of the queries to the \DM{} using the \important{preference function} $\preffun:\mathbb{R}^{n} \times \mathbb{R}^{n} \to \{-1,0,1\}$, defined as:
\begin{equation}
    \label{eq:preference_function}
    \preffun \left(\boldsymbol{x}_{i},\boldsymbol{x}_{j} \right)=\begin{cases}
        -1 & \text{if }\boldsymbol{x}_{i} \succ \boldsymbol{x}_{j} \Leftrightarrow f\left(\boldsymbol{x}_{i}\right)<f\left(\boldsymbol{x}_{j}\right)  \\
        0  & \text{if } \boldsymbol{x}_{i} \sim \boldsymbol{x}_{j} \Leftrightarrow f\left(\boldsymbol{x}_{i}\right)=f\left(\boldsymbol{x}_{j}\right)  \\
        1  & \text{if } \boldsymbol{x}_{j} \succ \boldsymbol{x}_{i} \Leftrightarrow f\left(\boldsymbol{x}_{i}\right)>f\left(\boldsymbol{x}_{j}\right)
    \end{cases}.
\end{equation}
In light of the just reviewed utility theory literature, we can see that $\preffun(\boldsymbol{x}_{i},\boldsymbol{x}_{j})$ in \eqref{eq:preference_function} is obtained from the utility representation of the preference relation $\succsim$ on $\Omega$ (see Theorem \ref{theo:debreu_utility_representation}) and from the fact that $f(\boldsymbol{x}) = - \utilfun(\boldsymbol{x})$. In the case of rational decision-makers (Definition \ref{def:rational_decision_maker}), reflexivity and transitivity of the preference relation are highlighted by the following properties of the preference function:
\begin{itemize}
    \item $\preffun(\boldsymbol{x}_{i},\boldsymbol{x}_{i}) = 0$, for each $\boldsymbol{x}_{i} \in \mathbb{R}^{n}$,
    \item $\preffun(\boldsymbol{x}_{i},\boldsymbol{x}_{j}) = \preffun(\boldsymbol{x}_{j},\boldsymbol{x}_{k}) = b \Rightarrow \preffun(\boldsymbol{x}_{i},\boldsymbol{x}_{k}) = b$, for any $\boldsymbol{x}_{i}, \boldsymbol{x}_{j}, \boldsymbol{x}_{k} \in \mathbb{R}^{n}$.
\end{itemize}
In the context of \PBO{}, surrogate-based methods aim solve Problem \eqref{eq:preference-based_optimization_problem} starting from a set of $N \in \mathbb{N}$ \important{distinct samples} of the decision vector:
\begin{equation}
    \label{eq:sample_set_X}
    \mathcal{X}=\left\{ \boldsymbol{x}_{i}:i=1,\ldots,N,\boldsymbol{x}_{i}\in\Omega,
    \boldsymbol{x}_{i} \neq \boldsymbol{x}_{j}, \forall i \neq j\right\}
\end{equation}
and a set of $M \in \mathbb{N}$ \important{preferences} expressed by the  decision-maker:
\begin{equation}
    \label{eq:Set_of_preferences_B}
    \mathcal{B}=\left\{ b_{h}:h=1,\ldots,M,b_{h}\in\{-1,0,1\}\right\}.
\end{equation}
The term $b_h$ in \eqref{eq:Set_of_preferences_B} is the output of the $h$-th query, where the decision-maker was asked to compare two samples in $\mathcal{X}$, as highlighted by the following \important{mapping set}:
\begin{align}
    \label{eq:Mapping_set_for_preferences_S}
    \mathcal{S}=\Big\{\left(\ell(h),\kappa(h)\right): \  & h=1,\ldots,M,\ell(h),\kappa(h)\in\mathbb{N},  b_{h}=\preffun\left(\boldsymbol{x}_{\ell(h)},\boldsymbol{x}_{\kappa(h)}\right), \\
                                                         & b_{h}\in\mathcal{B},\boldsymbol{x}_{\ell(h)},\boldsymbol{x}_{\kappa(h)}\in\mathcal{X}\Big\}.\nonumber
\end{align}
In \eqref{eq:Mapping_set_for_preferences_S}, $\ell: \mathbb{N} \to \mathbb{N}$ and $\kappa: \mathbb{N} \to \mathbb{N}$ are two mapping functions that associate the indexes of the samples, contained inside $\mathcal{X}$, to their respective preferences in $\mathcal{B}$. The cardinalities of these sets are $\lvert\mathcal{X}\rvert=N$ and $\lvert\mathcal{B}\rvert=\lvert\mathcal{S}\rvert=M$. Also note that $1\leq M\leq\begin{pmatrix}N \\
        2
    \end{pmatrix}$.
\section{Handling exploration and exploitation}
\label{sec:Handling_exploration_and_exploitation}
In this Section, we review some key concepts that are common in most preference-based optimization algorithms. We also cover briefly how exploration and exploitation are handled by algorithm \GLISpmethod.

Preference-based response surface methods iteratively propose new samples to evaluate with the objective of solving Problem \eqref{eq:preference-based_optimization_problem} while also minimizing the number of queries. Suppose that, at iteration $k$, we have at our disposal the set of samples $\mathcal{X}$ in \eqref{eq:sample_set_X},  $\lvert\mathcal{X}\rvert = N$, and the sets $\mathcal{B}$ in \eqref{eq:Set_of_preferences_B} and $\mathcal{S}$ in \eqref{eq:Mapping_set_for_preferences_S}, $\lvert\mathcal{B}\rvert=\lvert\mathcal{S}\rvert=M$. We define the best sample found so far as:
\begin{equation*}
    \boldsymbol{x_{best}}\left(N\right) \in \Omega: \boldsymbol{x_{best}}\left(N\right) \in \mathcal{X}, \lvert \mathcal{X}\rvert = N, \text{ and } \boldsymbol{x_{best}}\left(N\right) \succsim \boldsymbol{x}_i, \forall \boldsymbol{x}_i \in \mathcal{X}.
\end{equation*}
The new candidate sample,
\begin{equation*}
    \boldsymbol{x}_{N+1} \in \Omega,
\end{equation*}
is obtained by solving an additional global optimization problem:
\begin{align}
    \label{eq:Next_sample_search_no_black-box_constraints}
    \boldsymbol{x}_{N+1} & = \argmin{\boldsymbol{x}} a_N(\boldsymbol{x}) \\
    \text{s.t.}          & \quad\boldsymbol{x}\in\Omega, \nonumber
\end{align}
where $a_N: \mathbb{R}^{n} \to \mathbb{R}$ is a properly defined \important{acquisition function} which trades off exploration and exploitation.

In practice, once $\boldsymbol{x}_{N+1}$ has been computed, we let the decision-maker express a preference between the best sample found so far and the new one, obtaining:
\begin{equation*}
    b_{M+1} = \preffun\left(\boldsymbol{x}_{N+1},\boldsymbol{x_{best}}\left(N\right)\right).
\end{equation*}
After that, $\boldsymbol{x}_{N+1}$ is added to the set $\mathcal{X}$ and, similarly, the sets $\mathcal{B}$ and $\mathcal{S}$ are also updated with the new preference $b_{M+1}$. The process is iterated until a certain condition is met. Typically, a \important{budget}, or rather a maximum number of samples $N_{max} \in \mathbb{N}$, is set and the optimization procedure is stopped once it is reached.

In our case, $a_N\left(\boldsymbol{x}\right)$ in \eqref{eq:Next_sample_search_no_black-box_constraints} is defined starting from a surrogate model $\hat{f}_N: \mathbb{R}^{n} \to \mathbb{R}$, which approximates the scoring function $f\left(\boldsymbol{x}\right)$ of Problem \eqref{eq:preference-based_optimization_problem}, and a function $z_N: \mathbb{R}^{n} \to \mathbb{R}$ that promotes the exploration of those regions of $\Omega$ where fewer samples have been evaluated. The acquisition function that we will propose in this work is a weighted sum of these two contributions (see Section \ref{subsec:Acquisition_function}). 
$\hat{f}_N\left(\boldsymbol{x}\right)$ and $z_N\left(\boldsymbol{x}\right)$ are defined as in \GLISpmethod{}, which we now review.

\subsection{Surrogate model}
\label{subsec:Surrogate_model}
Given $N$ samples $\boldsymbol{x}_i \in \mathcal{X}$ in \eqref{eq:sample_set_X}, we define the surrogate model $\hat{f}_N:\mathbb{R}^{n} \to \mathbb{R}$ as the \important{radial basis function expansion} \cite{fasshauer2007meshfree}:
\begin{equation}
    \begin{split}
        \label{eq:RBF_surrogate_model}
        \hat{f}_N\left(\boldsymbol{x}\right) &=\sum_{i=1}^{N}\beta^{(i)}\cdot\varphi\left(\epsilon\cdot\euclideannorm{\boldsymbol{x}-\boldsymbol{x}_{i}}\right) \\
        &=\sum_{i=1}^{N}\beta^{(i)}\cdot\phi_{i}\left(\boldsymbol{x}\right) \\
        &=\boldsymbol{\phi}\left(\boldsymbol{x}\right)^{\top}\cdot\boldsymbol{\beta},
    \end{split}
\end{equation}
where $\varphi:\mathbb{R}_{\geq 0}\to\mathbb{R}$ is a properly chosen \important{radial function}, $\epsilon\in\mathbb{R}_{>0}$ is the so-called \important{shape parameter} (which needs to be tuned) and $\phi_{i}:\mathbb{R}^{n}\to\mathbb{R}$ is the \important{radial basis function} originated from $\varphi\left(\cdot\right)$ and center $\boldsymbol{x}_{i}\in\mathcal{X}$, namely $\phi_{i}\left(\boldsymbol{x}\right)=\varphi\left(\epsilon\cdot \euclideannorm{\boldsymbol{x}-\boldsymbol{x}_{i}}\right)$. Moreover, $\boldsymbol{\phi}\left(\boldsymbol{x}\right) \in \mathbb{R}^{N}$, $\boldsymbol{\phi}\left(\boldsymbol{x}\right)=\begin{bmatrix}\phi_{1}\left(\boldsymbol{x}\right) & \ldots & \phi_{N}\left(\boldsymbol{x}\right)\end{bmatrix}^{\top}$, is the radial basis function vector and $\boldsymbol{\beta}=\begin{bmatrix}\beta^{(1)} & \ldots & \beta^{(N)}\end{bmatrix}^{\top}\in\mathbb{R}^{N}$ is a vector of weights that has to be estimated from the preferences in $\mathcal{B}$ \eqref{eq:Set_of_preferences_B} and $\mathcal{S}$ \eqref{eq:Mapping_set_for_preferences_S}. Given a distance $r = \euclideannorm{\boldsymbol{x} - \boldsymbol{x}_i}$, some commonly used radial functions are \cite{fornberg2015primer}:
\begin{itemize}
    \item Inverse quadratic: $\varphi\left(\epsilon \cdot r\right)=\frac{1}{1+\left(\epsilon\cdot r\right)^{2}}$;
    \item Multiquadratic: $\varphi\left(\epsilon \cdot r\right)=\sqrt{1+\left(\epsilon\cdot r\right)^{2}}$;
    \item Linear: $\varphi\left(\epsilon \cdot r\right)=\epsilon\cdot r$;
    \item Gaussian: $\varphi\left(\epsilon \cdot r\right)=e^{-\left(\epsilon\cdot r\right)^{2}}$;
    \item Thin plate spline: $\varphi\left(\epsilon \cdot r\right)=\left(\epsilon\cdot r\right)^{2}\cdot\log\left(\epsilon\cdot r\right)$;
    \item Inverse multiquadratic: $\varphi\left(\epsilon \cdot r\right)=\frac{1}{\sqrt{1+\left(\epsilon\cdot r\right)^{2}}}$.
\end{itemize}
One advantage of using \eqref{eq:RBF_surrogate_model} as the surrogate model is highlighted by the following Proposition.
\begin{proposition}
    \label{prop:Differentiability_of_surrogate_function}
    $\hat{f}_N\left(\boldsymbol{x}\right)$ in \eqref{eq:RBF_surrogate_model} is differentiable everywhere\footnote{Note that, whenever we say that a multivariable function, such as $\hat{f}_N\left(\boldsymbol{x}\right)$ in \eqref{eq:RBF_surrogate_model}, is \quotes{differentiable everywhere}, we imply that it is differentiable at each $\boldsymbol{x} \in \mathbb{R}^{n}$.} 
    if and only if the chosen radial basis function $\phi_i \left(\boldsymbol{x}\right) = \varphi\left(\epsilon\cdot\euclideannorm{\boldsymbol{x}-\boldsymbol{x}_{i}} \right)$ is differentiable everywhere.
\end{proposition}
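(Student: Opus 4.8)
The plan is to handle the two implications separately and, in both, to use the fact that each basis function is a translate of the single map $\phi\left(\boldsymbol{x}\right) := \varphi\left(\epsilon \cdot \euclideannorm{\boldsymbol{x}}\right)$, namely $\phi_i\left(\boldsymbol{x}\right) = \phi\left(\boldsymbol{x} - \boldsymbol{x}_i\right)$, so that the statement ``$\phi_i$ is differentiable everywhere'' in fact does not depend on $i$. The ``if'' direction is immediate and is the one that matters in practice: if $\phi$ is differentiable at every point of $\mathbb{R}^n$, then so is every $\phi_i$ (differentiability is invariant under translation of the argument), and therefore $\hat{f}_N = \sum_{i=1}^{N} \beta^{(i)} \cdot \phi_i$, being a finite linear combination of functions differentiable everywhere, is differentiable everywhere. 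I would spell this out in two lines via linearity of the differential (and the chain rule).

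For the ``only if'' direction I would argue by contraposition: assuming that $\phi_{i_0}$ fails to be differentiable at some point for some index $i_0$, I would exhibit a point at which $\hat{f}_N$ fails to be differentiable. The structural observation is that $\boldsymbol{x} \mapsto \euclideannorm{\boldsymbol{x} - \boldsymbol{x}_i}$ is $C^\infty$ and strictly positive on $\mathbb{R}^n \setminus \left\{\boldsymbol{x}_i\right\}$, so by the chain rule (applied in one direction, and along rays emanating from $\boldsymbol{x}_i$ in the other) $\phi_i$ is differentiable at a point $\boldsymbol{x}_0 \neq \boldsymbol{x}_i$ if and only if $\varphi$ is differentiable at $\epsilon \cdot \euclideannorm{\boldsymbol{x}_0 - \boldsymbol{x}_i}$; consequently the set of points at which $\phi_i$ is not differentiable is contained in the union of $\left\{\boldsymbol{x}_i\right\}$ and a family of spheres centered at $\boldsymbol{x}_i$. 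Since the centers $\boldsymbol{x}_1, \ldots, \boldsymbol{x}_N$ are distinct and finite in number, and the bad sets of the remaining $\phi_i$, $i \neq i_0$, are (finitely many) such spheres/points about the \emph{other} centers, a point $\boldsymbol{z}$ at which $\phi_{i_0}$ is not differentiable can be chosen (on the relevant sphere about $\boldsymbol{x}_{i_0}$, or $\boldsymbol{z} = \boldsymbol{x}_{i_0}$ itself) so that every other $\phi_i$ is differentiable on a neighborhood of $\boldsymbol{z}$; placing such a $\boldsymbol{z}$ only requires the elementary remark that finitely many spheres have empty interior. Then $\hat{f}_N - \beta^{(i_0)} \cdot \phi_{i_0}$ is differentiable near $\boldsymbol{z}$ while $\phi_{i_0}$ is not, so, provided $\beta^{(i_0)} \neq 0$, the function $\hat{f}_N$ cannot be differentiable at $\boldsymbol{z}$, which is the desired contradiction.

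The main obstacle is precisely the caveat just flagged: if $\beta^{(i_0)} = 0$ the non-smooth basis function contributes nothing and the equivalence fails, and one must also exclude a conspiratorial cancellation among several non-smooth terms sharing a common sphere. I would dispose of this by reading the statement at the level of the functional form of the radial basis function (equivalently, in the non-degenerate situation where $\boldsymbol{\beta}$ has no zero entry, which is natural since a zero weight makes the corresponding center redundant), together with the observation that for each radial function listed after \eqref{eq:RBF_surrogate_model} the map $\varphi$ is in fact $C^\infty$ on $\left(0, \infty\right)$ --- so the only possible non-differentiability point of $\phi_i$ is its own center $\boldsymbol{x}_i$ --- at which point the distinctness of the centers eliminates the cancellation issue entirely and the contraposition argument above goes through verbatim. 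The remaining steps, i.e. the chain-rule computations and the sphere-placement remark, are routine.
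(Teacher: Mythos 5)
Your proof is correct, and it is considerably more careful than the paper's own argument, which in the source is relegated to a commented-out block and amounts to two sentences: the \quotes{if} direction is dispatched, exactly as you do, by observing that a finite linear combination of everywhere-differentiable functions is differentiable, while the \quotes{only if} direction is simply asserted (\quotes{if $\phi_i$ is not differentiable everywhere, then the same can be said for the surrogate}), backed only by the example of the linear radial function, for which $\hat{f}_N$ fails to be differentiable at each center. Your contrapositive argument supplies precisely the justification the paper skips: to pass from \quotes{one summand is bad at $\boldsymbol{z}$} to \quotes{the sum is bad at $\boldsymbol{z}$} one must first isolate the offending term $\beta^{(i_0)}\cdot\phi_{i_0}$ by checking that the remaining terms are differentiable in a neighborhood of $\boldsymbol{z}$, and your localization of the non-differentiability sets (the center plus spheres about it, via the chain rule and restriction to rays) is the right way to do this. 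You are also correct that the equivalence as literally stated has a degenerate failure mode: if $\beta^{(i_0)}=0$ the bad term vanishes, and since all the $\phi_i$ are built from the same $\varphi$ (so that, for the radial functions listed in the paper, they fail simultaneously and only at their respective distinct centers), the statement genuinely breaks only when $\boldsymbol{\beta}=\boldsymbol{0}_N$ --- a case the paper silently ignores. Your repair, reading the claim at the level of the functional form of $\varphi$ and exploiting that each listed $\varphi$ is smooth on $\left(0,\infty\right)$ so that the only candidate bad point of $\phi_i$ is $\boldsymbol{x}_i$ itself, is legitimate and also disposes of the cancellation worry. The only cost of your route is length; the payoff is that the \quotes{only if} direction is actually proved rather than asserted.
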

The surrogate model in \eqref{eq:RBF_surrogate_model} can be used to define the \important{surrogate preference function}
$\surrpreffun{N}:{\mathbb{R}^{n}\times\mathbb{R}^{n}\to\{-1,0,1\}}$. Differently from $\preffun(\boldsymbol{x}_{i},\boldsymbol{x}_{j})$ in \eqref{eq:preference_function}, we consider a tolerance $\sigma \in \mathbb{R}_{>0}$ to avoid using strict inequalities and equalities and define $\surrpreffun{N}(\boldsymbol{x}_{i},\boldsymbol{x}_{j})$ as \cite{Bemporad2021}:
\begin{equation}
    \label{eq:RBF_surrogate_preference_function}
    \surrpreffun{N}(\boldsymbol{x}_{i},\boldsymbol{x}_{j})=\begin{cases}
        -1 & \text{if }\hat{f}_N\left(\boldsymbol{x}_{i}\right)-\hat{f}_N\left(\boldsymbol{x}_{j}\right)\leq-\sigma            \\
        0  & \text{if }\lvert\hat{f}_N\left(\boldsymbol{x}_{i}\right)-\hat{f}_N\left(\boldsymbol{x}_{j}\right)\rvert\leq\sigma \\
        1  & \text{if }\hat{f}_N\left(\boldsymbol{x}_{i}\right)-\hat{f}_N\left(\boldsymbol{x}_{j}\right)\geq\sigma
    \end{cases}.
\end{equation}
Suppose now that we have at our disposal the sets $\mathcal{X}$ in \eqref{eq:sample_set_X}, $\mathcal{B}$ in \eqref{eq:Set_of_preferences_B} and $\mathcal{S}$ in \eqref{eq:Mapping_set_for_preferences_S}. Then, we are interested in a surrogate model $\hat{f}_N\left(\boldsymbol{x}\right)$ in \eqref{eq:RBF_surrogate_model} that correctly describes the preferences expressed by the decision-maker, i.e. we would like the corresponding surrogate preference function $\surrpreffun{N}(\boldsymbol{x}_{i},\boldsymbol{x}_{j})$ in \eqref{eq:RBF_surrogate_preference_function} to be such that:
\begin{equation*}
    b_{h}=\surrpreffun{N}\left(\boldsymbol{x}_{\ell(h)},\boldsymbol{x}_{\kappa(h)}\right), \quad
    \forall b_{h}\in\mathcal{B}, \left(\ell(h),\kappa(h)\right) \in \mathcal{S}, h = 1, \ldots, M.
\end{equation*}
This, in turn, translates into some constraints on  $\hat{f}_N\left(\boldsymbol{x}\right)$, which can be used to estimate $\boldsymbol{\beta}$. To do so, the authors of \GLISpmethod{} define the following convex optimization problem:
\begin{align}
    \label{eq:Beta_computation_optimization_problem}
                & \quad \argmin{\boldsymbol{\varepsilon},\boldsymbol{\beta}}
    \frac{\lambda}{2} \cdot \boldsymbol{\beta}^{\top} \cdot \boldsymbol{\beta} + \boldsymbol{r}^\top \cdot \boldsymbol{\varepsilon}                                                                                                   \\
    \text{s.t.} & \quad\hat{f}_N\left(\boldsymbol{x}_{\ell(h)}\right)-\hat{f}_N\left(\boldsymbol{x}_{\kappa(h)}\right)\leq-\sigma+\varepsilon^{(h)}                                                    & \forall h:b_{h}=-1 \nonumber \\
                & \quad\lvert\hat{f}_N\left(\boldsymbol{x}_{\ell(h)}\right)-\hat{f}_N\left(\boldsymbol{x}_{\kappa(h)}\right)\rvert\leq\sigma+\varepsilon^{(h)}                                         & \forall h:b_{h}=0  \nonumber \\
                & \quad\hat{f}_N\left(\boldsymbol{x}_{\ell(h)}\right)-\hat{f}_N\left(\boldsymbol{x}_{\kappa(h)}\right)\geq\sigma-\varepsilon^{(h)}                                                     & \forall h:b_{h}=1 \nonumber  \\
                & \quad\varepsilon^{(h)} \geq 0                                                                                                                                              \nonumber                                \\
                & \quad h=1,\ldots,M, \nonumber
\end{align}
where:
\begin{itemize}
    \item $\boldsymbol{\varepsilon} = \begin{bmatrix} \varepsilon^{(1)} & \ldots & \varepsilon^{(M)} \end{bmatrix}^{\top} \in\mathbb{R}_{\geq 0}^{M}$ is a vector of \important{slack variables} (one for each preference) which takes into consideration that: (i) there might be some outliers in $\mathcal{B}$ and $\mathcal{S}$ if the decision-maker expresses the preferences in an inconsistent way, and (ii) the selected radial function and/or shape parameter for $\hat{f}_N\left(\boldsymbol{x}\right)$ in \eqref{eq:RBF_surrogate_model} do not allow a proper approximation of the scoring function $f\left(\boldsymbol{x}\right)$;
    \item $\boldsymbol{r} = \begin{bmatrix} r^{(1)} & \ldots & r^{(M)} \end{bmatrix}^{\top} \in \mathbb{R}^{M}_{>0}$ is a vector of weights that can be used to penalize more some slacks related to the most important preferences.
          In \GLISpmethod{}, the authors weigh more the preferences associated to the current best candidate and define $\boldsymbol{r}$ as follows:
          \begin{align*}
              r^{(h)} = 1, \quad  & \forall h: \left(\ell(h),\kappa(h)\right) \in \mathcal{S}, \boldsymbol{x}_{\ell(h)} \neq \boldsymbol{x_{best}}(N)      \text{ and } \boldsymbol{x}_{\kappa(h)} \neq \boldsymbol{x_{best}}(N), \\
              r^{(h)} = 10, \quad & \forall h: \left(\ell(h),\kappa(h)\right) \in \mathcal{S}, \boldsymbol{x}_{\ell(h)} = \boldsymbol{x_{best}}(N) \text{ or } \boldsymbol{x}_{\kappa(h)} = \boldsymbol{x_{best}}(N).
          \end{align*}
    \item $\lambda \in \mathbb{R}_{\geq 0}$ plays the role of a \important{regularization parameter}. It is easy to see that, for $\lambda=0$, Problem \eqref{eq:Beta_computation_optimization_problem} is a Linear Program (LP) while, for $\lambda>0$, it is a Quadratic Program (QP).
\end{itemize}
Problem \eqref{eq:Beta_computation_optimization_problem} ensures that, at least approximately, $\hat{f}_N\left(\boldsymbol{x}\right)$ in \eqref{eq:RBF_surrogate_model} is a suitable representation of the unknown preference relation $\succsim$ on $\Omega$ which produced the data described in Section \ref{subsec:available_data_PBO} (see Theorem \ref{theo:debreu_utility_representation}).
\subsection{Exploration function}
\label{subsec:Exploration_function}
Consider a sample $\boldsymbol{x}_i \in \mathcal{X}$ in \eqref{eq:sample_set_X}. Its corresponding \important{Inverse Distance Weighting (\IDW{}) function} $w_{i}:\mathbb{R}^{n}\setminus\left\{ \boldsymbol{x}_{i}\right\} \to\mathbb{R}_{>0}$ is defined as \cite{shepard1968two}:
\begin{equation}
    \label{eq:Inverse_distance_weighting_function}
    w_{i}\left(\boldsymbol{x}\right)=\frac{1}{\euclideannorm{\boldsymbol{x}-\boldsymbol{x}_{i}}^{2}}.
\end{equation}
\GLISpmethod{} uses the so-called \important{\IDW{} distance function} $z_N: \mathbb{R}^{n} \to (-1, 0]$,
\begin{equation}
    \label{eq:Inverse_distance_weighting_distance_function}
    z_N\left(\boldsymbol{x}\right)=\begin{cases}
        0                                                                                                 & \text{if }\boldsymbol{x}\in\mathcal{X} \\
        {-\frac{2}{\pi}}\cdot\arctan\left(\frac{1}{\sum_{i=1}^{N}w_{i}\left(\boldsymbol{x}\right)}\right) & \text{otherwise}
    \end{cases},
\end{equation}
to promote the exploration in those regions of $\mathbb{R}^{n}$ where fewer samples have been evaluated. 
It is possible to prove the following Proposition and Lemma.
\begin{proposition}
    \label{prop:Differentiability_of_IDW_distance_function}
    The IDW distance function $z_N\left(\boldsymbol{x}\right)$ in \eqref{eq:Inverse_distance_weighting_distance_function} is differentiable everywhere.
\end{proposition}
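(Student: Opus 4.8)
The plan is to prove differentiability pointwise, splitting into the two cases that the piecewise definition \eqref{eq:Inverse_distance_weighting_distance_function} suggests: $\boldsymbol{x}\notin\mathcal{X}$ and $\boldsymbol{x}\in\mathcal{X}$. Away from the sample set the statement is almost immediate. Since $\mathcal{X}$ is finite it is closed, so any $\boldsymbol{x}\notin\mathcal{X}$ has a neighbourhood on which $z_N$ coincides with the function $\boldsymbol{y}\mapsto -\frac{2}{\pi}\arctan\bigl(1/\sum_{i=1}^{N}w_i(\boldsymbol{y})\bigr)$; there every $w_i$ from \eqref{eq:Inverse_distance_weighting_function} is differentiable (because $\euclideannorm{\boldsymbol{x}-\boldsymbol{x}_i}>0$), the sum $\sum_{i=1}^{N}w_i$ is differentiable and strictly positive, and $t\mapsto 1/t$ and $\arctan$ are differentiable on $\mathbb{R}_{>0}$ and $\mathbb{R}$ respectively. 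The chain rule then gives differentiability of $z_N$ at $\boldsymbol{x}$ (and, incidentally, a closed-form gradient there).

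The real work is at a centre $\boldsymbol{x}_j\in\mathcal{X}$, where $\sum_{i=1}^{N}w_i(\boldsymbol{x})\to+\infty$ as $\boldsymbol{x}\to\boldsymbol{x}_j$ and the closed-form expression is useless. Here I would verify differentiability directly from the definition, showing that $z_N$ is differentiable at $\boldsymbol{x}_j$ with $\nabla_{\boldsymbol{x}}z_N(\boldsymbol{x}_j)=\boldsymbol{0}_n$. The key estimate is
\[
    \lvert z_N(\boldsymbol{x})\rvert \;\le\; \frac{2}{\pi}\,\euclideannorm{\boldsymbol{x}-\boldsymbol{x}_j}^{2}\qquad\text{for every }\boldsymbol{x}\in\mathbb{R}^{n}.
\]
To obtain it, note that if $\boldsymbol{x}\in\mathcal{X}$ then $z_N(\boldsymbol{x})=0$ and the bound is trivial, while if $\boldsymbol{x}\notin\mathcal{X}$ then $\sum_{i=1}^{N}w_i(\boldsymbol{x})\ge w_j(\boldsymbol{x})=1/\euclideannorm{\boldsymbol{x}-\boldsymbol{x}_j}^{2}>0$, hence $0<1/\sum_{i=1}^{N}w_i(\boldsymbol{x})\le\euclideannorm{\boldsymbol{x}-\boldsymbol{x}_j}^{2}$, and the elementary inequality $0\le\arctan(t)\le t$ for $t\ge 0$ finishes it. Since $z_N(\boldsymbol{x}_j)=0$, the estimate yields
\[
    \frac{\bigl\lvert z_N(\boldsymbol{x})-z_N(\boldsymbol{x}_j)-\boldsymbol{0}_n^{\top}(\boldsymbol{x}-\boldsymbol{x}_j)\bigr\rvert}{\euclideannorm{\boldsymbol{x}-\boldsymbol{x}_j}}\;\le\;\frac{2}{\pi}\,\euclideannorm{\boldsymbol{x}-\boldsymbol{x}_j}\xrightarrow[\boldsymbol{x}\to\boldsymbol{x}_j]{}0,
\]
which is exactly differentiability at $\boldsymbol{x}_j$ with zero gradient.

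The only delicate point — the \textbf{hard part} — is recognising how to handle the centres: trying to differentiate the explicit formula fails because of the blow-up of $\sum_i w_i$, and the fix is to exploit the saturating bound $\arctan(t)\le t$, which converts that blow-up into a quadratic vanishing of $z_N$ near each $\boldsymbol{x}_j$, far more than is needed for differentiability. Everything else (closedness of the finite set $\mathcal{X}$, differentiability of the $w_i$, the chain rule) is routine, so the proof reduces to this one estimate together with the two-line limit above.
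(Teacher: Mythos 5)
Your proof is correct. Note that the paper does not actually reproduce a proof of Proposition \ref{prop:Differentiability_of_IDW_distance_function}: it defers entirely to the cited reference \cite{Bemporad2020}, and in Appendix \ref{sec:additional_proofs} it simply invokes the fact that the partial derivatives vanish at each $\boldsymbol{x}_i \in \mathcal{X}$ when deriving the gradient in Lemma \ref{lemma:Gradient_of_IDW_distance_function}. So what you have written supplies the argument the paper omits, and it does so along the natural lines: off the (finite, hence closed) sample set the chain rule applies, and at a centre $\boldsymbol{x}_j$ the bound $\arctan(t)\le t$ for $t\ge 0$ combined with $1/\sum_i w_i(\boldsymbol{x}) \le \euclideannorm{\boldsymbol{x}-\boldsymbol{x}_j}^{2}$ gives $\lvert z_N(\boldsymbol{x})\rvert \le \tfrac{2}{\pi}\euclideannorm{\boldsymbol{x}-\boldsymbol{x}_j}^{2}$, i.e. quadratic vanishing. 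One point in your favour worth keeping: by working directly with the Fr\'echet difference quotient you establish genuine differentiability at the centres with gradient $\boldsymbol{0}_n$, rather than merely the existence of vanishing partial derivatives (which alone would not suffice), and your conclusion is consistent with the first branch of \eqref{eq:Inverse_distance_weighting_distance_function_gradient}. No gaps.
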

The proof of Proposition \ref{prop:Differentiability_of_IDW_distance_function} is given in \cite{Bemporad2020}. Here, we derive the gradient of $z_N\left(\boldsymbol{x}\right)$ in \eqref{eq:Inverse_distance_weighting_distance_function} (see Appendix \ref{sec:additional_proofs} for its derivation).
\begin{lemma}
    \label{lemma:Gradient_of_IDW_distance_function}
    The gradient of the \IDW{} distance function $z_N\left(\boldsymbol{x}\right)$ in \eqref{eq:Inverse_distance_weighting_distance_function} is:
    \begin{equation}
        \label{eq:Inverse_distance_weighting_distance_function_gradient}
        \nabla _{\boldsymbol{x}} z_N\left(\boldsymbol{x}\right)=\begin{cases}
            \boldsymbol{0}_{n}                                                                                                                                                                                  & \text{if }\boldsymbol{x}\in\mathcal{X} \\
            -\frac{4}{\pi}\cdot\frac{\sum_{i=1}^{N}\left(\boldsymbol{x}-\boldsymbol{x}_{i}\right)\cdot w_{i}\left(\boldsymbol{x}\right)^{2}}{1+\left[\sum_{i=1}^{N}w_{i}\left(\boldsymbol{x}\right)\right]^{2}} & \text{otherwise}
        \end{cases}.
    \end{equation}
\end{lemma}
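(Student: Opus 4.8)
The plan is to treat the two branches of $z_N\left(\boldsymbol{x}\right)$ in \eqref{eq:Inverse_distance_weighting_distance_function} separately, the ``otherwise'' case being a routine chain-rule computation and the ``$\boldsymbol{x}\in\mathcal{X}$'' case being the only point that needs care. Throughout I would use that, by Proposition \ref{prop:Differentiability_of_IDW_distance_function}, $z_N\left(\boldsymbol{x}\right)$ is already known to be differentiable at every $\boldsymbol{x}\in\mathbb{R}^{n}$, so the task is only to \emph{identify} the gradient, not to establish differentiability from scratch.

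First I would handle $\boldsymbol{x}\notin\mathcal{X}$. On the open set $\mathbb{R}^{n}\setminus\mathcal{X}$ each weight $w_{i}\left(\boldsymbol{x}\right)=\euclideannorm{\boldsymbol{x}-\boldsymbol{x}_{i}}^{-2}$ is smooth and strictly positive, hence so is $W\left(\boldsymbol{x}\right):=\sum_{i=1}^{N}w_{i}\left(\boldsymbol{x}\right)$, and $z_N\left(\boldsymbol{x}\right)=-\frac{2}{\pi}\arctan\!\left(1/W\left(\boldsymbol{x}\right)\right)$ is a composition of smooth maps. Using $\nabla_{\boldsymbol{x}}\euclideannorm{\boldsymbol{x}-\boldsymbol{x}_{i}}=\left(\boldsymbol{x}-\boldsymbol{x}_{i}\right)/\euclideannorm{\boldsymbol{x}-\boldsymbol{x}_{i}}$ and the chain rule gives $\nabla_{\boldsymbol{x}}w_{i}\left(\boldsymbol{x}\right)=-2\left(\boldsymbol{x}-\boldsymbol{x}_{i}\right)w_{i}\left(\boldsymbol{x}\right)^{2}$, hence $\nabla_{\boldsymbol{x}}W\left(\boldsymbol{x}\right)=-2\sum_{i=1}^{N}\left(\boldsymbol{x}-\boldsymbol{x}_{i}\right)w_{i}\left(\boldsymbol{x}\right)^{2}$. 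Since $\frac{d}{ds}\arctan\left(1/s\right)=-\frac{1}{s^{2}+1}$, the outer chain rule yields $\nabla_{\boldsymbol{x}}z_N\left(\boldsymbol{x}\right)=\frac{2}{\pi}\cdot\frac{1}{W\left(\boldsymbol{x}\right)^{2}+1}\nabla_{\boldsymbol{x}}W\left(\boldsymbol{x}\right)$, and substituting $\nabla_{\boldsymbol{x}}W$ reproduces exactly the ``otherwise'' expression in \eqref{eq:Inverse_distance_weighting_distance_function_gradient}.

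Next, the case $\boldsymbol{x}=\boldsymbol{x}_{j}\in\mathcal{X}$, which is the main obstacle: here $z_N$ has no smooth formula (the weight $w_{j}$ is singular at $\boldsymbol{x}_{j}$), so the gradient cannot simply be read off from the first case. Since $z_N$ is differentiable at $\boldsymbol{x}_{j}$, it is enough to show that all of its directional derivatives there vanish. For a unit vector $\boldsymbol{d}$ and $t>0$ small enough that $\boldsymbol{x}_{j}+t\boldsymbol{d}\notin\mathcal{X}$, one has $w_{j}\left(\boldsymbol{x}_{j}+t\boldsymbol{d}\right)=1/t^{2}$, hence $W\left(\boldsymbol{x}_{j}+t\boldsymbol{d}\right)\geq 1/t^{2}$ and $0<1/W\left(\boldsymbol{x}_{j}+t\boldsymbol{d}\right)\leq t^{2}$; combining this with $0\leq\arctan u\leq u$ for $u\geq 0$ and $z_N\left(\boldsymbol{x}_{j}\right)=0$ gives
\[
    \left| \frac{z_N\left(\boldsymbol{x}_{j}+t\boldsymbol{d}\right)-z_N\left(\boldsymbol{x}_{j}\right)}{t} \right| = \frac{2}{\pi t}\arctan\!\left(\frac{1}{W\left(\boldsymbol{x}_{j}+t\boldsymbol{d}\right)}\right) \leq \frac{2}{\pi t}\cdot t^{2} = \frac{2t}{\pi} \longrightarrow 0
\]
as $t\to 0^{+}$. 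Thus every directional derivative of $z_N$ at $\boldsymbol{x}_{j}$ is $0$, and by differentiability $\nabla_{\boldsymbol{x}}z_N\left(\boldsymbol{x}_{j}\right)=\boldsymbol{0}_{n}$, which is the first branch of \eqref{eq:Inverse_distance_weighting_distance_function_gradient}.

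An alternative way to close the sample-point case, worth mentioning, is to note that as $\boldsymbol{x}\to\boldsymbol{x}_{j}$ the numerator of the ``otherwise'' gradient blows up like $\euclideannorm{\boldsymbol{x}-\boldsymbol{x}_{j}}^{-3}$ while the denominator blows up like $\euclideannorm{\boldsymbol{x}-\boldsymbol{x}_{j}}^{-4}$, so the ratio tends to $\boldsymbol{0}_{n}$; then invoke the standard fact that a function differentiable on a neighbourhood of a point, whose gradient has a limit at that point, is $C^{1}$ there with that limiting gradient. Both routes work; I would present the directional-derivative bound as the cleaner one, since it is self-contained and avoids a limiting argument on the gradient formula.
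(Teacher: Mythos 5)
Your computation for $\boldsymbol{x}\notin\mathcal{X}$ is the same chain-rule derivation as the paper's (the paper differentiates $w_i$, then $h(\boldsymbol{x})=1/\sum_i w_i(\boldsymbol{x})$, then composes with $-\tfrac{2}{\pi}\arctan(\cdot)$, landing on the identical expression), so that branch matches. Where you genuinely diverge is the sample-point branch: the paper does not prove $\nabla_{\boldsymbol{x}}z_N(\boldsymbol{x}_i)=\boldsymbol{0}_n$ at all, but simply cites \cite{Bemporad2020} for the fact that all partial derivatives vanish at each $\boldsymbol{x}_i\in\mathcal{X}$, whereas you supply a direct argument via the bound $w_j(\boldsymbol{x}_j+t\boldsymbol{d})=1/t^2$, $\arctan u\leq u$, and hence $\lvert z_N(\boldsymbol{x}_j+t\boldsymbol{d})-z_N(\boldsymbol{x}_j)\rvert\leq\tfrac{2}{\pi}t^2$. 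That argument is correct, and it buys more than you claim: since the bound is uniform in the direction $\boldsymbol{d}$, the remainder is $o(t)$, which establishes differentiability at the sample points with zero gradient outright, so you do not actually need to import Proposition \ref{prop:Differentiability_of_IDW_distance_function} for that case (the paper, by contrast, leans on the external reference for both the differentiability and the value of the gradient there). Your alternative closing route (the off-sample gradient formula tends to $\boldsymbol{0}_n$ like $\euclideannorm{\boldsymbol{x}-\boldsymbol{x}_j}$, then apply the standard $C^1$-extension fact) is also sound, with the stated growth rates of numerator and denominator being correct. In short: same computation where the paper computes, and a self-contained replacement for the step the paper delegates to a citation.
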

The gradient $\nabla _{\boldsymbol{x}} z_N\left(\boldsymbol{x}\right)$ in \eqref{eq:Inverse_distance_weighting_distance_function_gradient} will be particularly relevant in the following Section.
\section{Next candidate sample search}
\label{sec:Next_candidate_sample_search}
As previously mentioned in Section \ref{sec:Handling_exploration_and_exploitation}, a key aspect of surrogate-based methods is the exploration-exploitation dilemma. Typically, new candidate samples are sought by minimizing an acquisition function $a_N\left(\boldsymbol{x}\right)$ that is a weighted sum between the surrogate model and the exploration function. 
In practice, $\hat{f}_N\left(\boldsymbol{x}\right)$ in \eqref{eq:RBF_surrogate_model} and $z_N\left(\boldsymbol{x}\right)$ in \eqref{eq:Inverse_distance_weighting_distance_function} often exhibit different ranges and need to be rescaled. In particular, \GLISpmethod{} adopts the following $a_N\left(\boldsymbol{x}\right)$:
\begin{equation}
    \label{eq:Acquisition_function_GLISp}
    a_N(\boldsymbol{x})=\frac{\hat{f}_N\left(\boldsymbol{x}\right)}{\Delta \hat{F}} + \delta \cdot z_N\left(\boldsymbol{x}\right),
\end{equation}
where $\delta \in \mathbb{R}_{\geq 0}$ is the exploration-exploitation trade-off weight. The division by
\begin{equation}
    \label{eq:Delta_f_acquisition_function_GLISp}
    \Delta \hat{F} = \max{\boldsymbol{x}_i \in \mathcal{X}} \hat{f}_N(\boldsymbol{x}_i) - \min{\boldsymbol{x}_i \in \mathcal{X}} \hat{f}_N(\boldsymbol{x}_i)
\end{equation}
aims to rescale the surrogate model to make it assume a range that is comparable to that of the \IDW{} distance function in \eqref{eq:Inverse_distance_weighting_distance_function}, which is $(-1, 0]$.

In this Section, we address some limitations of $a_N\left(\boldsymbol{x}\right)$ in \eqref{eq:Acquisition_function_GLISp}, which might prevent \GLISpmethod{} from reaching the global minimizer of Problem \eqref{eq:preference-based_optimization_problem}, and define an alternative acquisition function. Furthermore, we propose a strategy to iteratively vary the exploration-exploitation trade-off.

\subsection{Shortcomings of \texttt{GLISp}}
\label{subsec:Improving_exploration_capabilities_of_GLISp}
There are two shortcomings of $a_N\left(\boldsymbol{x}\right)$ in \eqref{eq:Acquisition_function_GLISp} that limit the exploratory capabilities of \GLISpmethod{}. First, the rescaling of $\hat{f}_N\left(\boldsymbol{x}\right)$ in \eqref{eq:Acquisition_function_GLISp}, which relies on $\Delta \hat{F}$ in \eqref{eq:Delta_f_acquisition_function_GLISp}, only takes into account the previously evaluated samples inside $\mathcal{X}$ in \eqref{eq:sample_set_X} and thus it can be ineffective in making $\frac{\hat{f}_N\left(\boldsymbol{x}\right)}{\Delta \hat{F}}$ and $z_N\left(\boldsymbol{x}\right)$ comparable over all $\Omega$ (see Problem \eqref{eq:Next_sample_search_no_black-box_constraints}). Second, the \IDW{} distance function in \eqref{eq:Inverse_distance_weighting_distance_function} exhibits two characteristics that can make its contribution negligible in $a_N\left(\boldsymbol{x}\right)$ in \eqref{eq:Acquisition_function_GLISp} and complicate the selection of $\delta$:
\begin{enumerate}
    \item Even though the range of $z_N\left(\boldsymbol{x}\right)$ is $(-1, 0]$, what we are really interested in when solving Problem \eqref{eq:Next_sample_search_no_black-box_constraints} and, ultimately, Problem \eqref{eq:preference-based_optimization_problem}, are the values that it assumes for $\boldsymbol{x} \in \Omega$ and not on its whole domain $\mathbb{R}^{n}$. In particular, there are some situations for which $\lvert z_N\left(\boldsymbol{x}\right) \rvert \ll 1, \forall \boldsymbol{x} \in \Omega$. 
          Consider, for example, the case $\mathcal{X} = \left\{\boldsymbol{x}_1\right\}$ ($N = 1$); then, $\forall \boldsymbol{x} \in \mathbb{R}^n \setminus \mathcal{X}$, the \IDW{} distance function simply becomes:
          \begin{equation*}
              z_1\left(\boldsymbol{x}\right) =-\frac{2}{\pi}\cdot\arctan\left(\euclideannorm{\boldsymbol{x}-\boldsymbol{x}_{1}}^{2}\right).
          \end{equation*}
          Suppose that Problem \eqref{eq:preference-based_optimization_problem} is only bound constrained, i.e. $\Omega = \left\{ \boldsymbol{x}: \boldsymbol{l}\leq\boldsymbol{x}\leq\boldsymbol{u}  \right\}$. Then,  $z_1\left(\boldsymbol{x}\right)$ assumes its lowest value at one (or more) of the vertices of the box defined by the constraints in $\Omega$. Define $\boldsymbol{v}_{\Omega} \in \Omega$ as one of such vertices; if $\euclideannorm{\boldsymbol{v}_{\Omega} - \boldsymbol{x}_{1}}$ is close to zero, then $\lvert z_N\left(\boldsymbol{x}\right) \rvert \ll 1, \forall \boldsymbol{x} \in \Omega$. 
          Thus, unless $\delta \gg 1$ in \eqref{eq:Acquisition_function_GLISp}, $\hat{f}_N\left(\boldsymbol{x}\right)$ in \eqref{eq:RBF_surrogate_model} and $z_N\left(\boldsymbol{x}\right)$ in \eqref{eq:Inverse_distance_weighting_distance_function} might not be comparable.
    \item The (absolute) values assumed by the \IDW{} distance function decrease as the number of samples increases. To clarify this, consider two sets of samples:
          \begin{align*}
              \mathcal{X}'  & = \left\{ \boldsymbol{x}_{1}, \ldots , \boldsymbol{x}_{N}  \right\}, \quad \lvert\mathcal{X}'\rvert = N, \\
              \mathcal{X}'' & = \mathcal{X}' \cup \left\{ \boldsymbol{x}_{N + 1} \right\}, \quad  \lvert\mathcal{X}''\rvert = N + 1.
          \end{align*}
          Given any point $\fixed{\boldsymbol{x}} \in \mathbb{R}^n \setminus \mathcal{X}''$, the \IDW{} distance functions obtained from the previously defined sets are:
          \begin{align*}
              z_{N}\left(\boldsymbol{\tilde{x}}\right)   & = {-\frac{2}{\pi}}\cdot\arctan\left(\frac{1}{\sum_{i=1}^{N}w_{i}\left(\boldsymbol{\tilde{x}}\right)}\right),   \\
              z_{N+1}\left(\boldsymbol{\tilde{x}}\right) & = {-\frac{2}{\pi}}\cdot\arctan\left(\frac{1}{\sum_{i=1}^{N+1}w_{i}\left(\boldsymbol{\tilde{x}}\right)}\right).
          \end{align*}
          Note that $w_i\left(\boldsymbol{\tilde{x}} \right) > 0, \forall \boldsymbol{\tilde{x}}\in \mathbb{R}^n \setminus \mathcal{X}''$ and $i = 1, \ldots, N + 1$ (see \eqref{eq:Inverse_distance_weighting_function}). Hence:
          \begin{equation*}
              \lvert z_{N}\left(\boldsymbol{\tilde{x}}\right)\rvert > \lvert z_{N+1}\left(\boldsymbol{\tilde{x}}\right)\rvert > 0,
          \end{equation*}
          proving the above point. In practice, unless $\delta$ in \eqref{eq:Acquisition_function_GLISp} is progressively increased as the iterations go on, \GLISpmethod{} will explore the constraint set $\Omega$ of Problem \eqref{eq:preference-based_optimization_problem} less as the number of samples increases, regardless of whether a region that contains the global minimizer $\boldsymbol{x^*}$ has been located.
\end{enumerate}
A visualization of these two characteristics of $z_N\left(\boldsymbol{x}\right)$ in \eqref{eq:Inverse_distance_weighting_distance_function} is presented in \figname{} \ref{fig:Limitations_of_the_IDW_distance_function}.

\begin{figure*}[!htb]
    \centering
    \subfloat{
        \centering
        \includegraphics[width=.5\textwidth]{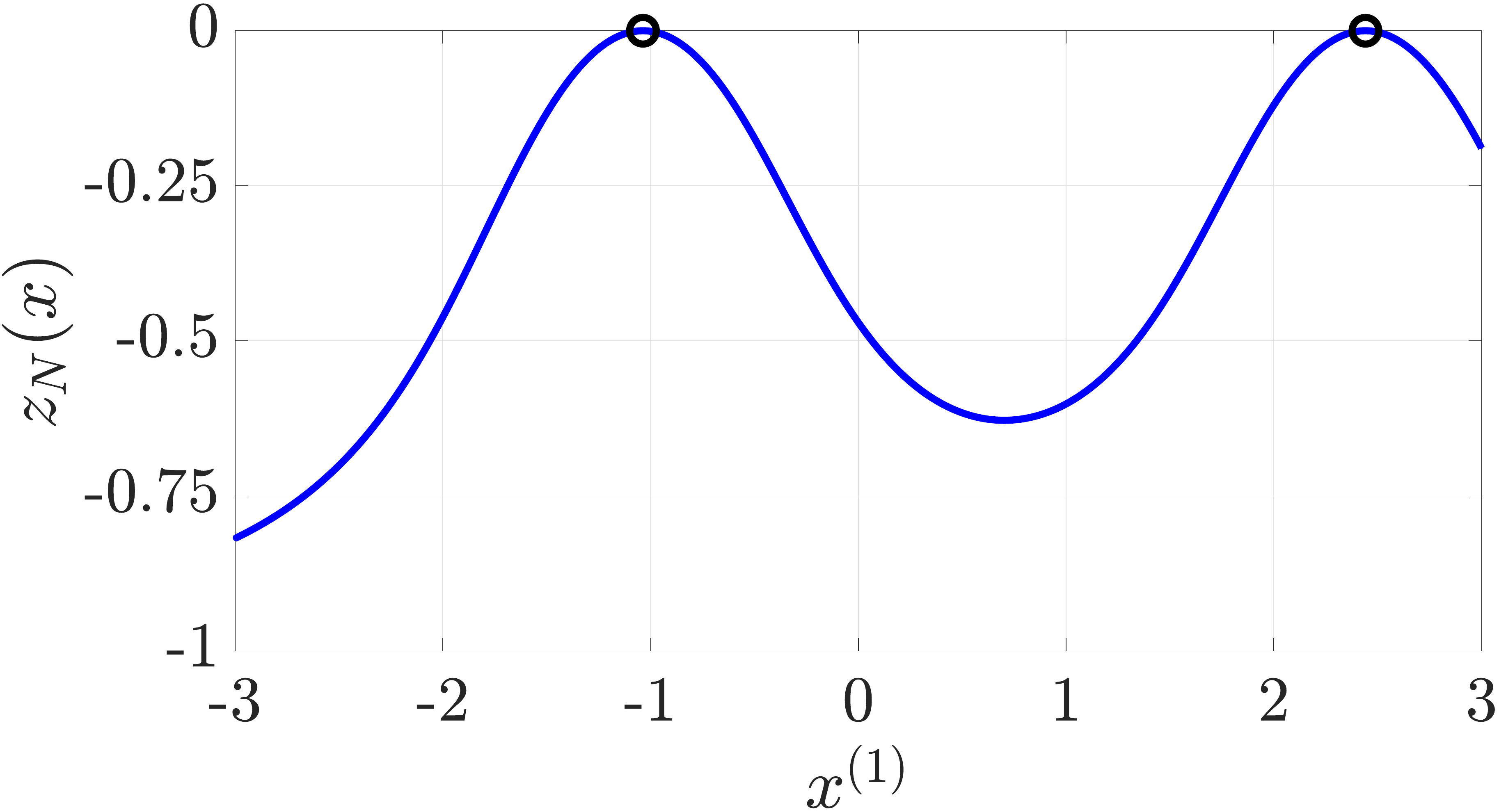}
    }
    \subfloat{
        \centering
        \includegraphics[width=.5\textwidth]{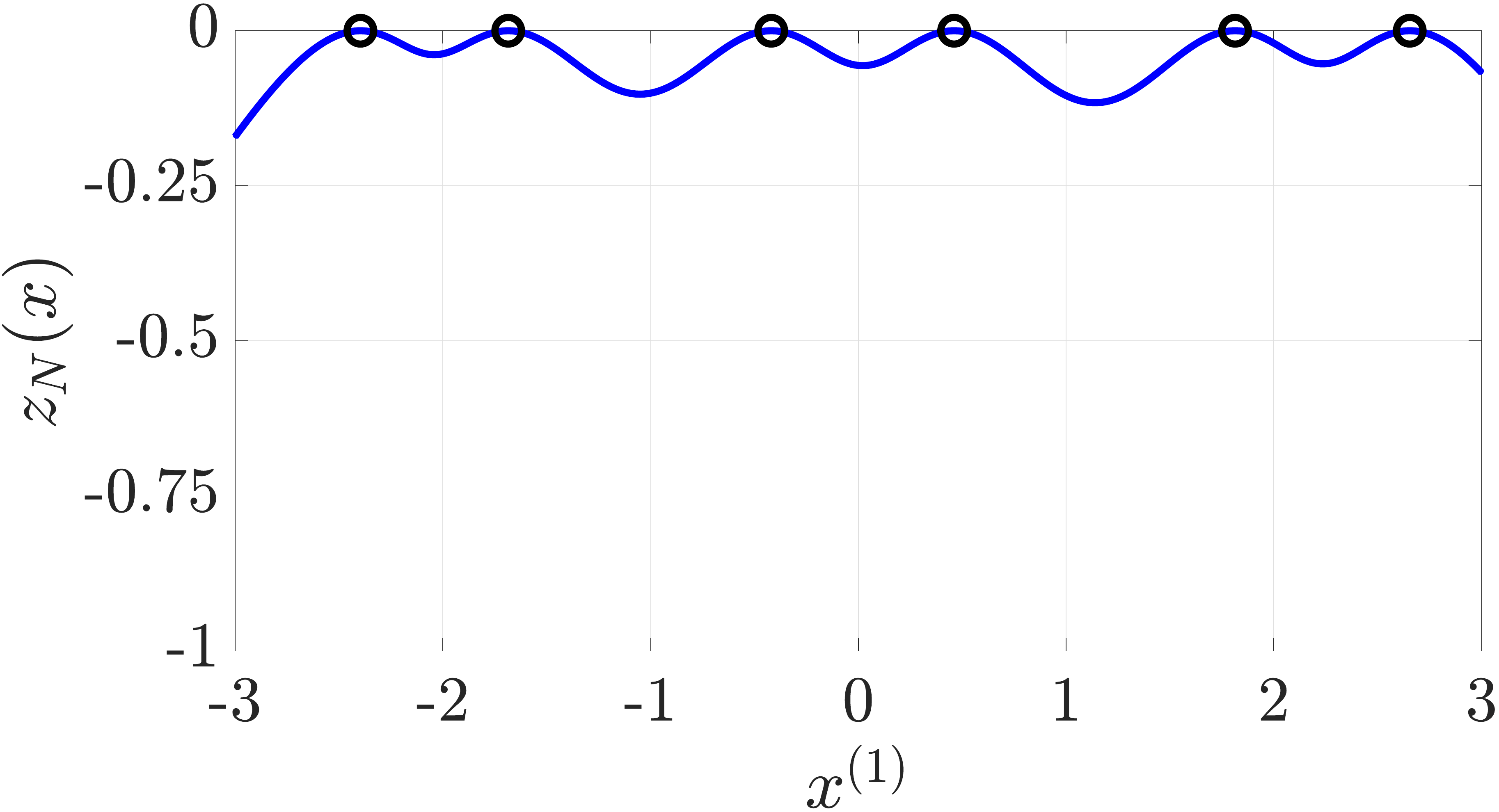}
    }

    \caption{
        \label{fig:Limitations_of_the_IDW_distance_function}
        Examples of the \IDW{} distance function $z_N\left(x\right)$ in \eqref{eq:Inverse_distance_weighting_distance_function} for  different numbers of points ($N = 2$ on the left, $N = 6$ on the right) and $-3 = l \leq x \leq u = 3$. Notice how $z_N\left(x\right)$ does not cover its whole range $(-1, 0]$, at least inside the bound constraints, and its absolute values decrease as the number of samples increases.
    }
\end{figure*}

In this work, we overcome the limitations of $a_N\left(\boldsymbol{x}\right)$ in \eqref{eq:Acquisition_function_GLISp} by defining an acquisition function that is similar to the one used by the Metric Stochastic Response Surface (\MSRSmethod{}) algorithm, a popular black-box optimization procedure. In \MSRSmethod{}, the surrogate $\hat{f}_N\left(\boldsymbol{x}\right)$ and the exploration function $z_N\left(\boldsymbol{x}\right)$ 
are made comparable by randomly sampling $\Omega$ and rescaling the two using min-max normalization. Then, Problem \eqref{eq:Next_sample_search_no_black-box_constraints} is not solved explicitly but by choosing the generated random sample that achieves the lowest value for the acquisition function. Here, we propose to rescale $z_N\left(\boldsymbol{x}\right)$ in \eqref{eq:Inverse_distance_weighting_distance_function} and $\hat{f}_N\left(\boldsymbol{x}\right)$ in \eqref{eq:RBF_surrogate_model} using some insights on the stationary points of the \IDW{} distance function and solve Problem \eqref{eq:Next_sample_search_no_black-box_constraints} explicitly, using a proper global optimization solver.  

\subsection{Novel rescaling strategy}
In this Section, we derive the approximate locations of the stationary points of the \IDW{} distance function in \eqref{eq:Inverse_distance_weighting_distance_function} and use them to define an augmented set of samples $\mathcal{X}_{aug} \supset \mathcal{X}$ that is suited for the min-max normalization of both $z_N\left(\boldsymbol{x}\right)$ and $\hat{f}_N\left(\boldsymbol{x}\right)$.

\subsubsection{Stationary points of the \IDW{} distance function}
\label{subsubsec:Stationary_points_for_IDW_distance_function}
The locations of the global maximizers of $z_N\left(\boldsymbol{x}\right)$ can be deduced immediately from \eqref{eq:Inverse_distance_weighting_distance_function} and Lemma \ref{lemma:Gradient_of_IDW_distance_function}, as stated by the following Proposition.
\begin{proposition}
    \label{prop:global_maximizers_of_IDW_distance}
    Each $\boldsymbol{x}_i \in \mathcal{X}$ in \eqref{eq:sample_set_X} is a global maximizer of $z_N\left(\boldsymbol{x}\right)$ in \eqref{eq:Inverse_distance_weighting_distance_function}.
\end{proposition}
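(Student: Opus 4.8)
The plan is to prove the stronger statement that $z_N(\boldsymbol{x}) \le 0$ for every $\boldsymbol{x} \in \mathbb{R}^{n}$, with equality holding exactly on $\mathcal{X}$. Since $z_N\left(\boldsymbol{x}_i\right) = 0$ for each $\boldsymbol{x}_i \in \mathcal{X}$ by the first branch of \eqref{eq:Inverse_distance_weighting_distance_function}, this immediately gives that every $\boldsymbol{x}_i \in \mathcal{X}$ attains the maximum value of $z_N\left(\boldsymbol{x}\right)$ and is therefore a global maximizer (and, as a byproduct, that $\mathcal{X}$ is precisely the set of global maximizers).

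The argument proceeds by inspecting the two branches of the definition \eqref{eq:Inverse_distance_weighting_distance_function} separately. If $\boldsymbol{x} \in \mathcal{X}$, then $z_N\left(\boldsymbol{x}\right) = 0$ by definition. If $\boldsymbol{x} \notin \mathcal{X}$, then $\euclideannorm{\boldsymbol{x}-\boldsymbol{x}_{i}} > 0$ for all $i = 1, \ldots, N$, so every \IDW{} weight $w_{i}\left(\boldsymbol{x}\right) = \euclideannorm{\boldsymbol{x}-\boldsymbol{x}_{i}}^{-2}$ in \eqref{eq:Inverse_distance_weighting_function} is well defined and strictly positive; consequently $\sum_{i=1}^{N} w_{i}\left(\boldsymbol{x}\right) > 0$ and $\frac{1}{\sum_{i=1}^{N} w_{i}\left(\boldsymbol{x}\right)} > 0$. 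Because $\arctan$ is strictly increasing with $\arctan\left(0\right) = 0$, it follows that $\arctan\!\left(\frac{1}{\sum_{i=1}^{N} w_{i}\left(\boldsymbol{x}\right)}\right) > 0$, hence $z_N\left(\boldsymbol{x}\right) = -\frac{2}{\pi}\arctan\!\left(\frac{1}{\sum_{i=1}^{N} w_{i}\left(\boldsymbol{x}\right)}\right) < 0$. Combining the two cases yields $z_N\left(\boldsymbol{x}\right) \le 0 = z_N\left(\boldsymbol{x}_i\right)$ for all $\boldsymbol{x} \in \mathbb{R}^{n}$ and all $\boldsymbol{x}_i \in \mathcal{X}$, which is the claim.

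There is essentially no obstacle here: the result is a direct consequence of the positivity of the \IDW{} weights and the sign of $\arctan$. The only point requiring a little care is that the $\boldsymbol{x} \in \mathcal{X}$ case must be handled through the first branch of \eqref{eq:Inverse_distance_weighting_distance_function} rather than the $\arctan$ expression, since each $w_{i}\left(\boldsymbol{x}\right)$ is undefined at $\boldsymbol{x}_{i}$ (its domain is $\mathbb{R}^{n}\setminus\{\boldsymbol{x}_{i}\}$) — which is exactly why $z_N\left(\boldsymbol{x}\right)$ is defined piecewise. If desired, one may cross-check the conclusion against Lemma \ref{lemma:Gradient_of_IDW_distance_function}, which gives $\nabla_{\boldsymbol{x}} z_N\left(\boldsymbol{x}\right) = \boldsymbol{0}_{n}$ on $\mathcal{X}$, consistent with these points being maximizers; but the gradient expression is not needed for the proof itself.
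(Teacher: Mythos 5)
Your proof is correct and follows essentially the same route as the paper's: the paper likewise observes that $z_N\left(\boldsymbol{x}\right) < 0$ off $\mathcal{X}$ and $z_N\left(\boldsymbol{x}_i\right) = 0$ on $\mathcal{X}$, and concludes global maximality from the resulting inequality $z_N\left(\boldsymbol{x}\right) \leq z_N\left(\boldsymbol{x}_i\right)$ for all $\boldsymbol{x} \in \mathbb{R}^{n}$. The only differences are cosmetic: you justify the strict negativity explicitly via the positivity of the \IDW{} weights and the sign of $\arctan$, and you correctly note that the paper's additional appeal to $\nabla_{\boldsymbol{x}} z_N\left(\boldsymbol{x}_i\right) = \boldsymbol{0}_{n}$ is not needed for the conclusion.
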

\begin{proof}
    Recall that:
    \begin{enumerate}[(i)]
        \item $\nabla _{\boldsymbol{x}} z_N\left(\boldsymbol{x}_i\right) = \boldsymbol{0}_{n}, \forall \boldsymbol{x}_i \in \mathcal{X}$, see \eqref{eq:Inverse_distance_weighting_distance_function_gradient}; \label{proof:item:global_maximizers_of_IDW_distance_1}
        \item $z_N\left(\boldsymbol{x}\right) < 0, \forall \boldsymbol{x} \in \mathbb{R}^n \setminus \mathcal{X}$, see \eqref{eq:Inverse_distance_weighting_distance_function}; \label{proof:item:global_maximizers_of_IDW_distance_2}
        \item $z_N\left(\boldsymbol{x}_i\right) = 0, \forall \boldsymbol{x}_i \in \mathcal{X}$, see \eqref{eq:Inverse_distance_weighting_distance_function}. \label{proof:item:global_maximizers_of_IDW_distance_3}
    \end{enumerate}
    From Item \ref{proof:item:global_maximizers_of_IDW_distance_1} we deduce that each $\boldsymbol{x}_i \in \mathcal{X}$ is a stationary point of $z_N\left(\boldsymbol{x}\right)$. Item \ref{proof:item:global_maximizers_of_IDW_distance_2}, in conjunction with Item \ref{proof:item:global_maximizers_of_IDW_distance_3}, implies that such samples are local maximizers of the \IDW{} distance function in \eqref{eq:Inverse_distance_weighting_distance_function} since there exists a neighborhood of $\boldsymbol{x}_i \in \mathcal{X}$, denoted as $\mathcal{N}\left(\boldsymbol{x}_i\right)$, such that $z_N\left(\boldsymbol{x}\right) \leq z_N\left(\boldsymbol{x}_i \right), \forall \boldsymbol{x} \in \mathcal{N}\left(\boldsymbol{x}_i\right)$. Moreover, due to Item \ref{proof:item:global_maximizers_of_IDW_distance_2}, $z_N\left(\boldsymbol{x}\right) \leq z_N\left(\boldsymbol{x}_i \right), \forall \boldsymbol{x} \in \mathbb{R}^n$ and not just in a neighborhood of $\boldsymbol{x}_i \in \mathcal{X}$. Hence, each $\boldsymbol{x}_i \in \mathcal{X}$ is a global maximizer of $z_N\left(\boldsymbol{x}\right)$ in \eqref{eq:Inverse_distance_weighting_distance_function}.
\end{proof}
Reaching similar conclusions for the minimizers of $z_N\left( \boldsymbol{x} \right)$ is much harder; however, we can consider some simplified situations. Note that we are not necessarily interested in finding the minimizers of the \IDW{} distance function in \eqref{eq:Inverse_distance_weighting_distance_function} with high accuracy, but rather to gain some insights on where they are likely to be located so that we can rescale both $z_N\left( \boldsymbol{x} \right)$ in \eqref{eq:Inverse_distance_weighting_distance_function} and $\hat{f}_N\left(\boldsymbol{x}\right)$ in \eqref{eq:RBF_surrogate_model} sufficiently enough to make them comparable. Moreover, their approximate locations can be used to solve the following global  optimization problem (\important{pure exploration}):
\begin{align}
    \label{eq:Minimization_of_IDW_distance_function_optimization_problem}
    \boldsymbol{x}_{N+1} & = \argmin{\boldsymbol{x}} z_N(\boldsymbol{x}) \\
    \text{s.t.}          & \quad\boldsymbol{x}\in\Omega \nonumber
\end{align}
by using a multi-start derivative-based optimization method with warm-start \cite{nocedal1999numerical,marti2018multi} (recall that $z_N\left(\boldsymbol{x}\right)$ is differentiable everywhere, see Proposition \ref{prop:Differentiability_of_IDW_distance_function}). Problem \eqref{eq:Minimization_of_IDW_distance_function_optimization_problem} is quite relevant for the global convergence of the algorithm that we will propose in Section \ref{sec:GLISp-r_and_convergence}.
\begin{remark}
    In the following Paragraphs, we  analyze where the local minimizers of $z_N\left( \boldsymbol{x} \right)$ in \eqref{eq:Inverse_distance_weighting_distance_function} and the solution(s) of the simplified problem:
    \begin{align}
        \label{eq:Minimization_of_IDW_distance_function_optimization_problem_simplified}
        \boldsymbol{x}_{N+1} & = \argmin{\boldsymbol{x}} z_N(\boldsymbol{x})                        \\
        \text{s.t.}          & \quad\boldsymbol{l} \leq \boldsymbol{x} \leq \boldsymbol{u}\nonumber
    \end{align}
    are located in some specific cases. Note that $\left\{\boldsymbol{x}: \boldsymbol{l} \leq \boldsymbol{x} \leq \boldsymbol{u}\right\} \supseteq \Omega$ (see \eqref{eq:constraint_set_Omega}) and thus the global minimum 
    of Problem \eqref{eq:Minimization_of_IDW_distance_function_optimization_problem_simplified} is lower than or at most equal to the global minimum of Problem \eqref{eq:Minimization_of_IDW_distance_function_optimization_problem}. Therefore, the minimizers of Problem \eqref{eq:Minimization_of_IDW_distance_function_optimization_problem_simplified} are better suited to perform min-max rescaling of $z_N\left( \boldsymbol{x} \right)$ than those of Problem \eqref{eq:Minimization_of_IDW_distance_function_optimization_problem}.
\end{remark}

\paragraph{Case $\mathcal{X} = \left\{\boldsymbol{x}_1\right\}$ ($N = 1$)} The \IDW{} distance function and its gradient $\forall \boldsymbol{x} \in \mathbb{R}^n \setminus \mathcal{X}$ are:
\begin{align*}
    z_N\left(\boldsymbol{x}\right)                         & =-\frac{2}{\pi}\cdot\arctan\left(\euclideannorm{\boldsymbol{x}-\boldsymbol{x}_{1}}^{2}\right),                                                              \\
    \nabla_{\boldsymbol{x}} z_N\left(\boldsymbol{x}\right) & =-\frac{4}{\pi}\cdot\left(\boldsymbol{x}-\boldsymbol{x}_{1}\right)\cdot\frac{w_{1}\left(\boldsymbol{x}\right)^{2}}{1+w_{1}\left(\boldsymbol{x}\right)^{2}}.
\end{align*}
Clearly, $\forall \boldsymbol{x} \in \mathbb{R}^n \setminus \mathcal{X}$, the gradient is never zero since $w_{1}\left(\boldsymbol{x}\right) > 0$. Therefore, the only stationary point is the global maximizer $\boldsymbol{x}_1$ (see Proposition \ref{prop:global_maximizers_of_IDW_distance}). However, if we were to consider Problem \eqref{eq:Minimization_of_IDW_distance_function_optimization_problem_simplified}, 
then its solution would be located at one of the vertices of the box defined by the bound constraints $\boldsymbol{l} \leq \boldsymbol{x} \leq \boldsymbol{u}$.

\paragraph{Case $\mathcal{X} = \left\{\boldsymbol{x}_1, \boldsymbol{x}_2\right\}$ ($N = 2$)} The gradient of the \IDW{} distance function $\forall \boldsymbol{x} \in \mathbb{R}^n \setminus \mathcal{X}$ is:
\begin{equation*}
    \nabla_{\boldsymbol{x}}z_N\left(\boldsymbol{x}\right)=-\frac{4}{\pi}\cdot\frac{\left(\boldsymbol{x}-\boldsymbol{x}_{1}\right)\cdot w_{1}\left(\boldsymbol{x}\right)^{2}+\left(\boldsymbol{x}-\boldsymbol{x}_{2}\right)\cdot w_{2}\left(\boldsymbol{x}\right)^{2}}{1+\left[w_{1}\left(\boldsymbol{x}\right)+w_{2}\left(\boldsymbol{x}\right)\right]^{2}}.
\end{equation*}
Let us consider the midpoint $\boldsymbol{x_{\mu}}=\frac{\boldsymbol{x}_{1}+\boldsymbol{x}_{2}}{2}$, that is such that $\euclideannorm{\boldsymbol{x_{\mu}}-\boldsymbol{x}_{1}}=\euclideannorm{\boldsymbol{x_{\mu}}-\boldsymbol{x}_{2}}$ and for which $w_{1}\left(\boldsymbol{x_{\mu}}\right)=w_{2}\left(\boldsymbol{x_{\mu}}\right)$. If we substitute it in the previous expression, we obtain:
\begin{equation*}
    \nabla_{\boldsymbol{x}}z_N\left(\boldsymbol{x_{\mu}}\right)=\boldsymbol{0}_{n},
\end{equation*}
which means that $\boldsymbol{x_{\mu}}$ is a stationary point for $z_N\left(\boldsymbol{x}\right)$ in \eqref{eq:Inverse_distance_weighting_distance_function}. It is easy to see by visual inspection that such point is actually a local minimizer for the \IDW{} distance function (see for example \figname{} \ref{fig:IDW_distance_function_and_gradient}). However, note that $\boldsymbol{x_{\mu}}$ is not necessarily the global solution of Problem \eqref{eq:Minimization_of_IDW_distance_function_optimization_problem_simplified}, it might just be a local one.

\begin{figure}[!htb]
    \centering
    \subfloat[$\mathcal{X} = \left\{x_1 \right\}$.]{
        \centering
        \includegraphics[width=.5\textwidth]{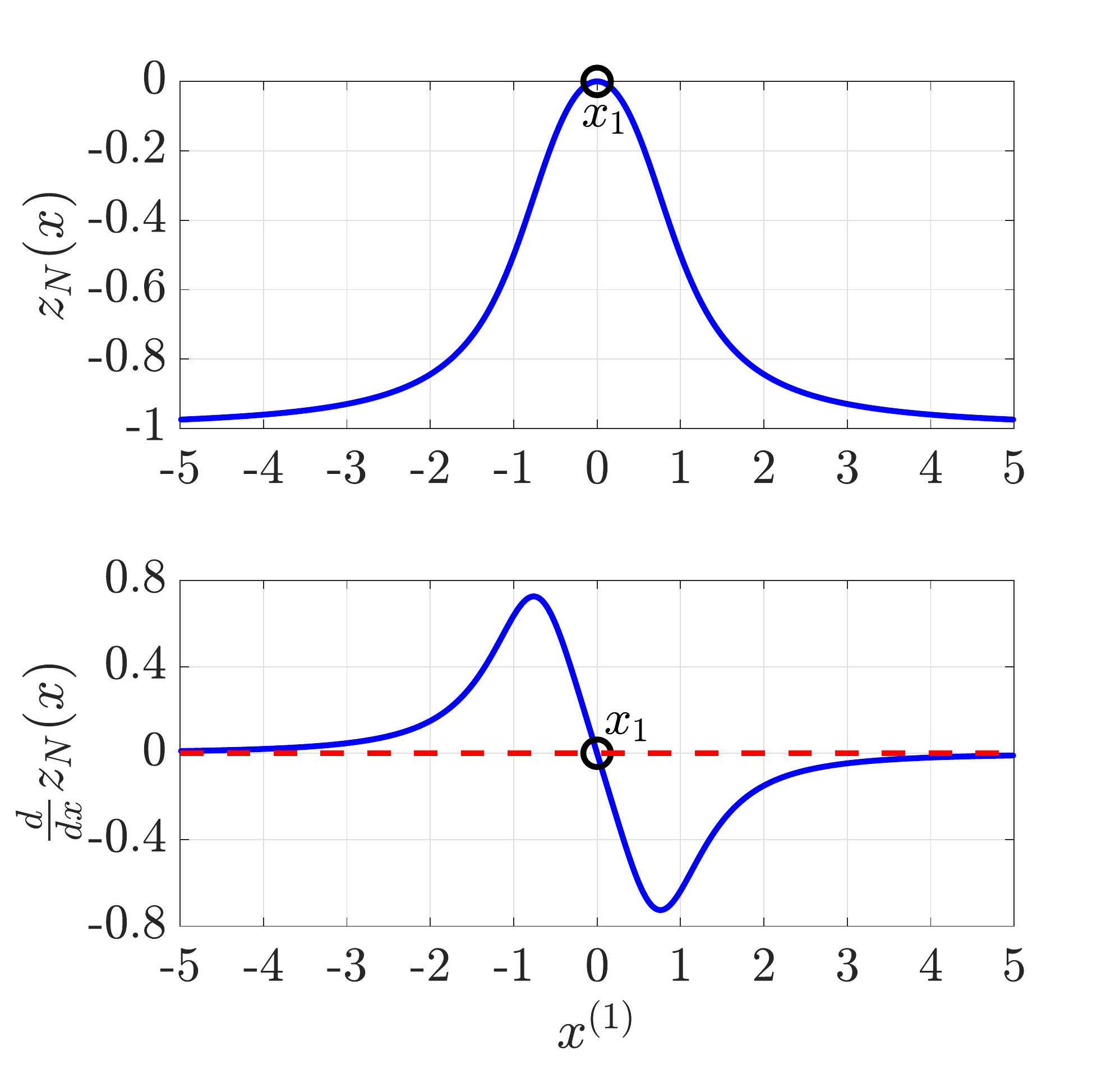}
    }
    \subfloat[$\mathcal{X} = \left\{x_1, x_2\right\}$. \label{fig:IDW_distance_function_and_gradient_2_points}]{
        \centering
        \includegraphics[width=.5\textwidth]{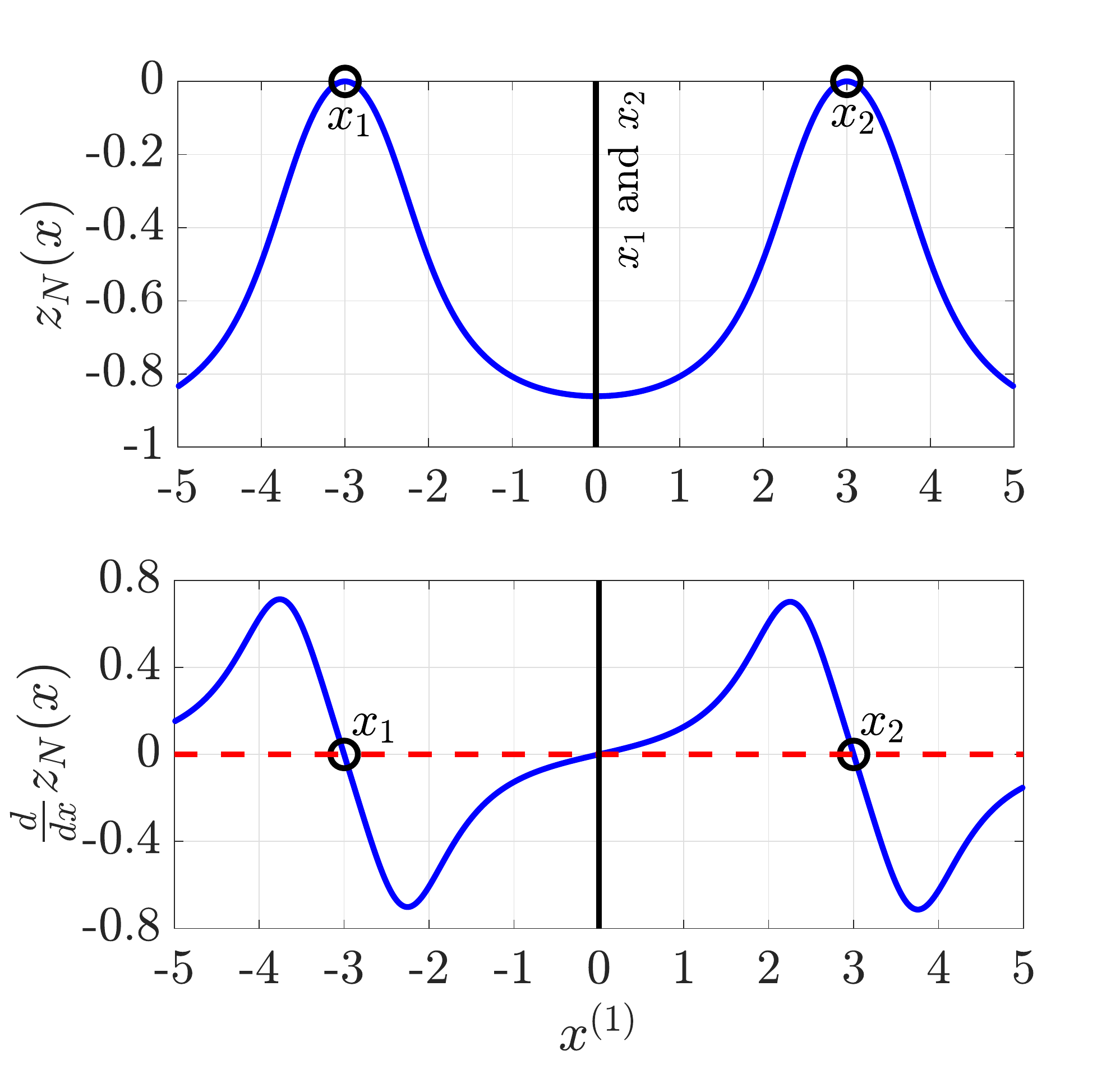}
    }

    \subfloat[$\mathcal{X} = \mathcal{X}^{(1)} \cup \mathcal{X}^{(2)}$. \label{fig:IDW_distance_function_and_gradient_2_clusters}]{
        \centering
        \includegraphics[width=.5\textwidth]{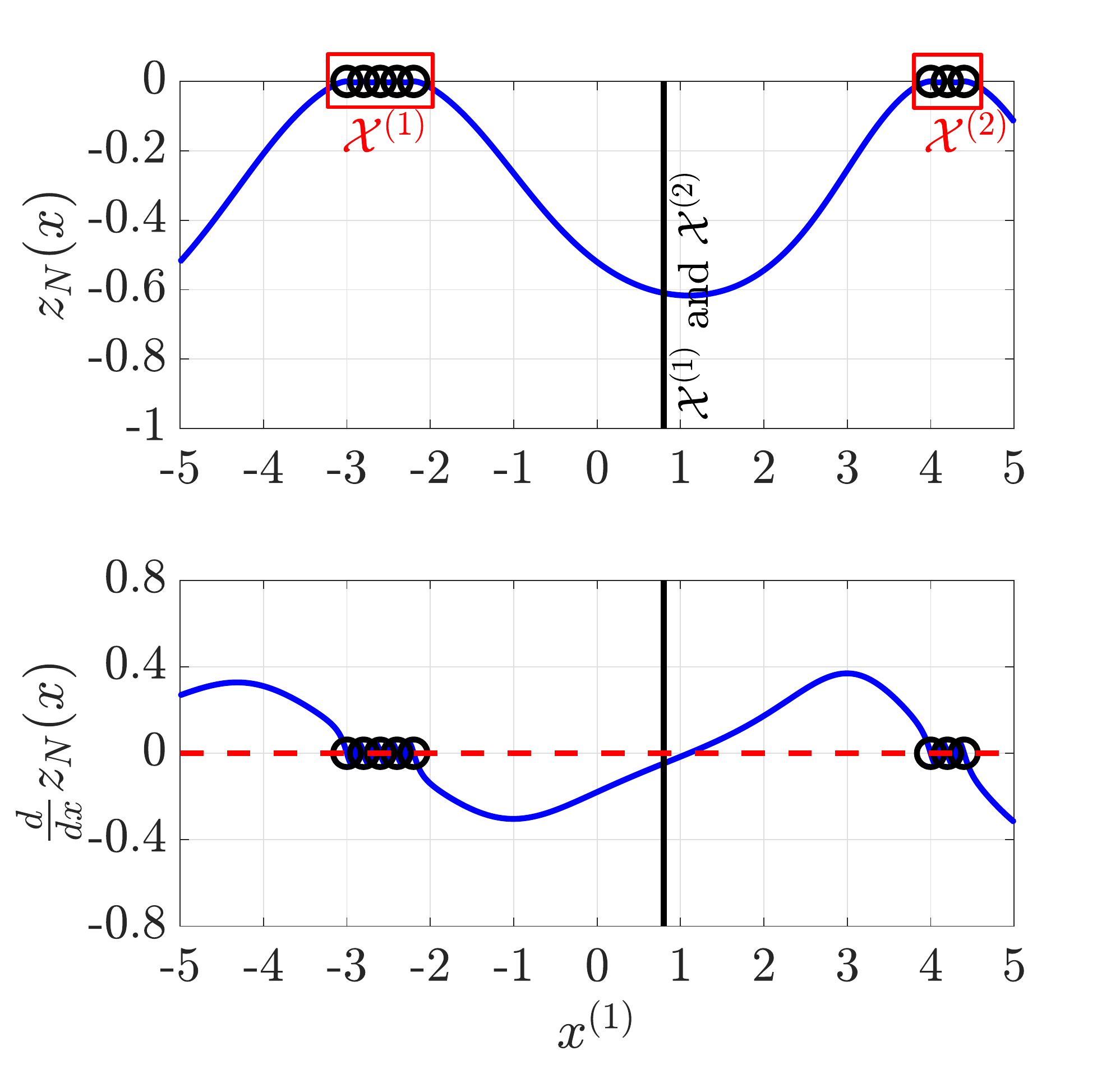}
    }
    \subfloat[$\mathcal{X} = \mathcal{X}^{(1)} \cup \mathcal{X}^{(2)} \cup \mathcal{X}^{(3)} \cup \mathcal{X}^{(4)} \cup \mathcal{X}^{(5)}$. \label{fig:IDW_distance_function_and_gradient_5_clusters}]{
        \centering
        \includegraphics[width=.5\textwidth]{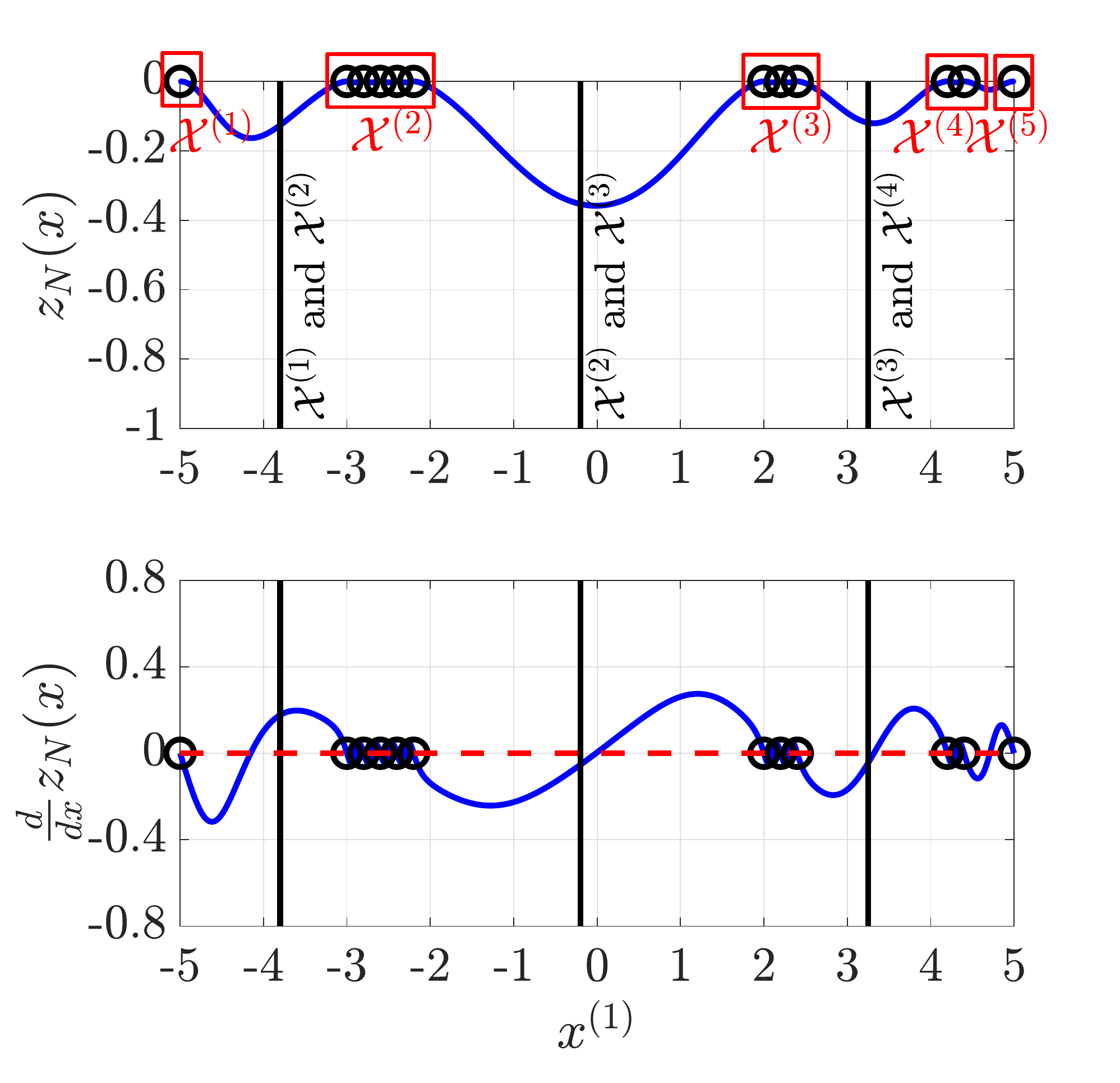}
    }

    \caption{
        \label{fig:IDW_distance_function_and_gradient}
        One-dimensional examples of the \IDW{} distance function $z_N\left(x\right)$ in \eqref{eq:Inverse_distance_weighting_distance_function} and its gradient $\nabla_{x} z_N\left(x\right)$ in \eqref{eq:Inverse_distance_weighting_distance_function_gradient} in the four analyzed cases. The red boxes mark the different clusters while the red dashed lines highlight the values of $x$ for which the first derivative of $z_N\left(x\right)$ is zero. Finally, the black vertical lines mark the midpoints (either between points or centroids of the clusters). Only a portion of all possible midpoints between centroids has been reported in the general case. 
        Notice that the midpoints in \figname{} \ref{fig:IDW_distance_function_and_gradient_2_clusters} and \figname{} \ref{fig:IDW_distance_function_and_gradient_5_clusters} are quite close to the local minimizers of $z_N\left(\boldsymbol{x}\right)$, while the midpoint in \figname{} \ref{fig:IDW_distance_function_and_gradient_2_points} is an exact local solution of Problem \eqref{eq:Minimization_of_IDW_distance_function_optimization_problem_simplified}.
    }
\end{figure}

\paragraph{Case $\mathcal{X} = \mathcal{X}^{(1)} \cup \mathcal{X}^{(2)}$ ($N > 2$)} Suppose now that the samples contained in $\mathcal{X}$ \eqref{eq:sample_set_X} can be partitioned into two clusters:
\begin{itemize}
    \item $\mathcal{X}^{(1)} = \left\{\boldsymbol{x}_1, \ldots, \boldsymbol{x}_{N_1}\right\}$ ($\lvert \mathcal{X}^{(1)}\rvert = N_1$),
    \item $\mathcal{X}^{(2)} = \left\{\boldsymbol{x}_{N_1 + 1}, \ldots, \boldsymbol{x}_{N}\right\}$ ($\lvert \mathcal{X}^{(2)}\rvert = N - N_1$),
\end{itemize}
such that $\mathcal{X}^{(1)} \cap \mathcal{X}^{(2)} = \emptyset$ and $\mathcal{X}^{(1)} \cup \mathcal{X}^{(2)} = \mathcal{X}$.
Consider the midpoint between the centroids of each cluster:
\begin{equation}
    \label{eq:Midpoint_2_clusters_case}
    \boldsymbol{x_{\mu}} = \frac{1}{2} \cdot \left[\frac{\sum_{\boldsymbol{x}_{i}\in\mathcal{X}^{(1)}} \boldsymbol{x}_{i}}{N_1}+\frac{\sum_{\boldsymbol{x}_{i}\in\mathcal{X}^{(2)}} \boldsymbol{x}_{i} }{N - N_1} \right].
\end{equation}
We make the simplifying assumption that all the points contained inside each cluster are quite close to each other, i.e. $\boldsymbol{x}_{1} \approx \boldsymbol{x}_{2} \approx \ldots \approx \boldsymbol{x}_{N_1}$ and $\boldsymbol{x}_{N_1 + 1} \approx \boldsymbol{x}_{N_1 + 2} \approx \ldots \approx \boldsymbol{x}_{N}$. Then, the midpoint in \eqref{eq:Midpoint_2_clusters_case} is approximately equal to $\boldsymbol{x_{\mu}} \approx \frac{\boldsymbol{x}_1 + \boldsymbol{x}_N}{2}$. 
Moreover, we have that $w_{1}\left(\boldsymbol{x_{\mu}}\right)\approx\ldots\approx w_{N_{1}}\left(\boldsymbol{x_{\mu}}\right)\approx w_{N_{1}+1}\left(\boldsymbol{x_{\mu}}\right)\approx\ldots\approx w_{N}\left(\boldsymbol{x_{\mu}}\right)$. Thus, the gradient of the \IDW{} distance function at $\boldsymbol{x_{\mu}}$ is approximately equal to:
\begin{align*}
    \nabla_{\boldsymbol{x}}z_N\left(\boldsymbol{x_{\mu}}\right) & =-\frac{4}{\pi}\cdot\frac{\sum_{i=1}^{N}\left(\boldsymbol{x_{\mu}}-\boldsymbol{x}_{i}\right)\cdot w_{i}\left(\boldsymbol{x_{\mu}}\right)^{2}}{1+\left[\sum_{i=1}^{N}w_{i}\left(\boldsymbol{x_{\mu}}\right)\right]^{2}}                                                                                    \\
                                                                & \approx -\frac{4}{\pi}\!\cdot\!\frac{w_{1}\left(\boldsymbol{x_{\mu}}\right)^2}{1\!+\!\left[N\cdot w_{1}\left(\boldsymbol{x_{\mu}}\right)\right]^{2}}\!\cdot\!\left[ \left(\frac{N}{2}\!-\!N_1\right)\!\cdot\!\boldsymbol{x}_1\!+\!\left(- \frac{N}{2}\!+\!N_1\right)\!\cdot\! \boldsymbol{x}_{N} \right].
\end{align*}
Clearly, if the clusters are nearly equally sized, i.e. $N_1 \approx N - N_1 \approx \frac{N}{2}$, then:
\begin{equation*}
    \nabla_{\boldsymbol{x}}z_N\left(\boldsymbol{x_{\mu}}\right) \approx \boldsymbol{0}_{n},
\end{equation*}
reaching a similar result to the one that we have seen for the case $N = 2$. 

\paragraph{General case ($N > 2$)} Any set of samples $\mathcal{X}$ in \eqref{eq:sample_set_X} can be partitioned into an arbitrary number of disjoint clusters, say $K \in \mathbb{N}, K \leq \lvert\mathcal{X}\rvert = N$, i.e.:
\begin{equation*}
    \mathcal{X}= \mathcal{X}^{(1)}\cup\mathcal{X}^{(2)}\cup\ldots\cup\mathcal{X}^{(K)}, \quad \text{such that } \mathcal{X}^{(i)} \cap \mathcal{X}^{(j)} = \emptyset, \forall i \neq j.
\end{equation*}
In this case, finding the local solutions of Problem \eqref{eq:Minimization_of_IDW_distance_function_optimization_problem_simplified} explicitly, or even approximately, is quite complex. Heuristically speaking, if the clusters are \quotes{well spread} (i.e. all the points contained inside each cluster $\mathcal{X}^{(i)}$ are sufficiently far away from the others in $\mathcal{X}^{(j)}, j = 1, \ldots, K, j \neq i$), then we can approximately deduce where the local minimizers of $z_N\left(\boldsymbol{x}\right)$ in \eqref{eq:Inverse_distance_weighting_distance_function} are located.
\begin{figure}[!htb]
    \centering
    \includegraphics[width=0.8\textwidth]{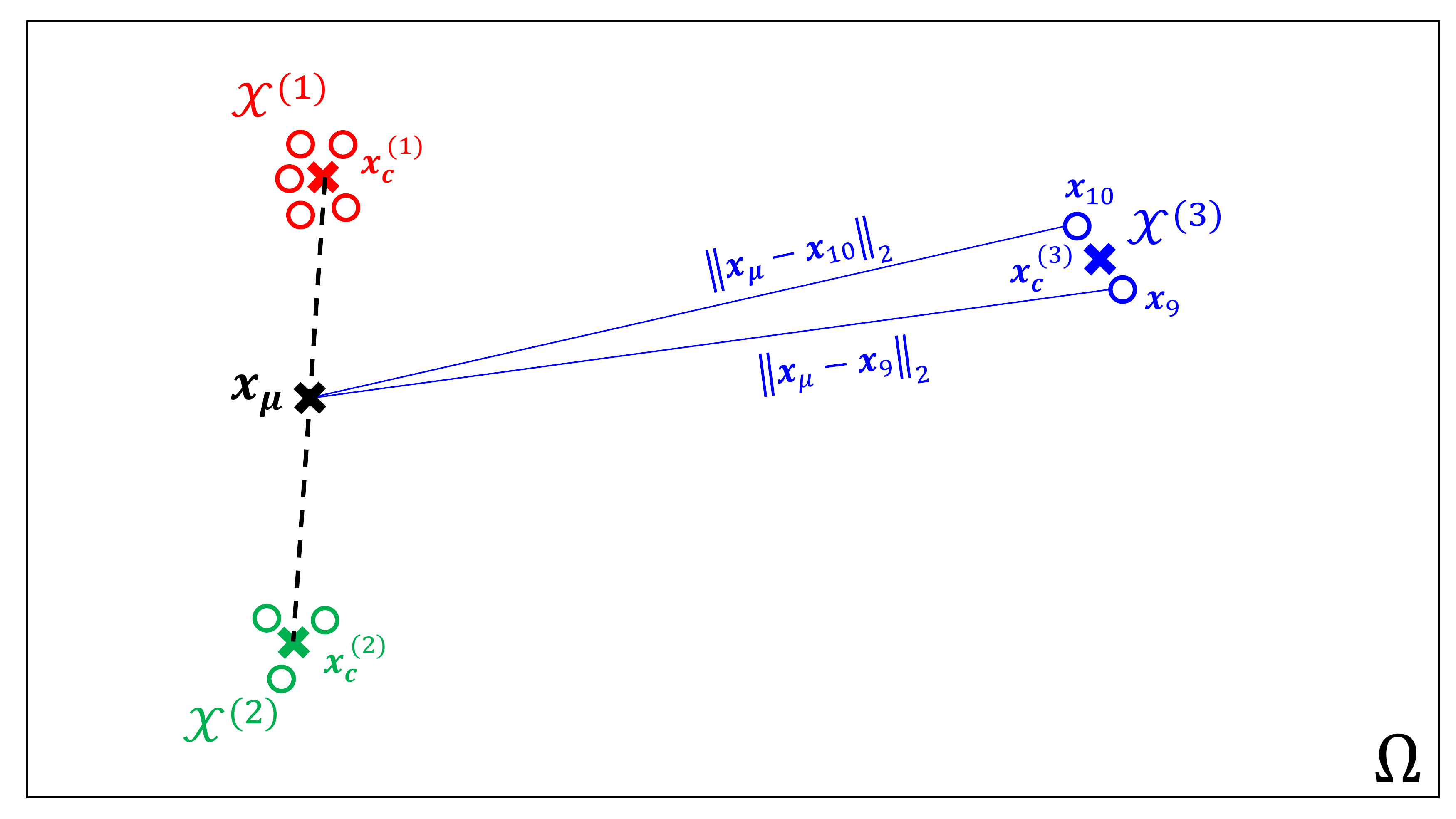}

    \caption{
        \label{fig:IDW_general_case_example}
        Two-dimensional example of \quotes{well spread} clusters, highlighted with different colors. The circles denote the points contained in $\mathcal{X}$ while the crosses represent the centroids of $\mathcal{X}^{(1)}, \mathcal{X}^{(2)}$ and $\mathcal{X}^{(3)}$. $\boldsymbol{x_{\mu}}$ is the midpoint between the centroids of clusters $\mathcal{X}^{(1)}$ and $\mathcal{X}^{(2)}$. Finally, the blue lines highlight the distances between the samples of cluster $\mathcal{X}^{(3)}$ and $\boldsymbol{x_{\mu}}$.
    }
\end{figure}
For instance, \figname{} \ref{fig:IDW_general_case_example} depicts a set of samples $\mathcal{X}$ that has been partitioned into three clusters, $\mathcal{X}^{(1)}, \mathcal{X}^{(2)}$ and $\mathcal{X}^{(3)}$, and for which the previous hypothesis is satisfied. 
In the general case, given the clusters $\mathcal{X}^{(i)}$ and $\mathcal{X}^{(j)}$, we compute their centroids $\boldsymbol{x}^{(i)}_{\boldsymbol{c}}$, $\boldsymbol{x}^{(j)}_{\boldsymbol{c}}$, and the corresponding midpoint $\boldsymbol{x_{\mu}}$ between them as:
\begin{subequations}
    \begin{align}
        \label{eq:Centroids_and_midpoint_of_clusters:centroid}
        \boldsymbol{x}^{(k)}_{\boldsymbol{c}} & = \frac{\sum_{\boldsymbol{x}_i \in \mathcal{X}^{(k)}} \boldsymbol{x}_i}{\lvert\mathcal{X}^{(k)}\rvert} & \text{(centroid of $k$-th cluster)}, \\
        \label{eq:Centroids_and_midpoint_of_clusters:midpoint}
        \boldsymbol{x_{\mu}}                  & = \frac{\boldsymbol{x}^{(i)}_{\boldsymbol{c}} + \boldsymbol{x}^{(j)}_{\boldsymbol{c}}}{2}              & \text{(midpoint)}.
    \end{align}
\end{subequations}
Going back to the example depicted in \figname{} \ref{fig:IDW_general_case_example}, if we consider the midpoint $\boldsymbol{x_{\mu}}$ between the centroids of $\mathcal{X}^{(1)}$ and $\mathcal{X}^{(2)}$, due to the \quotes{well spread} hypothesis we can say that $\euclideannorm{\boldsymbol{x_{\mu}} - \boldsymbol{x}_i} \gg 0, \forall \boldsymbol{x}_i \in \mathcal{X}^{(3)}$, making the contributions of the points inside the third cluster negligible when evaluating $z_N\left(\boldsymbol{x}\right)$ in \eqref{eq:Inverse_distance_weighting_distance_function} at $\boldsymbol{x_{\mu}}$. Therefore, the \IDW{} distance function at $\boldsymbol{x_{\mu}}$ is approximately equal to:
\begin{align*}
    z_N\left(\boldsymbol{x_{\mu}}\right) & = {-\frac{2}{\pi}}\cdot \arctan
    \left\{\left[
        \sum_{k = 1}^{3} \left(\sum_{\boldsymbol{x}_i \in \mathcal{X}^{(k)}} \frac{1}{\euclideannorm{\boldsymbol{x_{\mu}} - \boldsymbol{x}_i}^2}\right)
    \right]^{-1}\right\}                                                        \\
                                         & \approx {-\frac{2}{\pi}}\cdot\arctan
    \left[\left(
        \sum_{\boldsymbol{x}_i \in \mathcal{X}^{(1)}} \frac{1}{\euclideannorm{\boldsymbol{x_{\mu}} - \boldsymbol{x}_i}^2} + \sum_{\boldsymbol{x}_i \in \mathcal{X}^{(2)}} \frac{1}{\euclideannorm{\boldsymbol{x_{\mu}} - \boldsymbol{x}_i}^2}
        \right)^{-1} \right].
\end{align*}
In general, given $K$ clusters, if these are \quotes{well spread}, then we can consider each possible couple of clusters separately and neglect the contributions of the remaining ones. Approximately speaking, we could split the general case into $\begin{pmatrix}K \\
        2
    \end{pmatrix}$ distinct problems that read as follows: find the stationary points of the \IDW{} distance function $z_{N_{i \cup j}}\left(\boldsymbol{x}\right)$ in \eqref{eq:Inverse_distance_weighting_distance_function} defined from the set of samples $\mathcal{X}^{(i)}\cup \mathcal{X}^{(j)}, i \neq j$ and $N_{i \cup j} = \lvert\mathcal{X}^{(i)}\cup \mathcal{X}^{(j)}\rvert$. Hence, rough locations of the stationary points of $z_{N_{i \cup j}}\left(\boldsymbol{x}\right)$ can be found by following the same rationale proposed for the previously analyzed cases.
\\\\
Some one-dimensional examples of all the previously analyzed situations are reported in \figname{} \ref{fig:IDW_distance_function_and_gradient}.

\subsubsection{Min-max rescaling and augmented sample set}
Given a generic set of samples $\mathcal{X} = \left\{\boldsymbol{x}_1, \ldots, \boldsymbol{x}_N\right\}$ and a  multivariable function $h:\mathbb{R}^{n} \to \mathbb{R}$, \important{min-max rescaling} (or normalization) \cite{han2011data} rescales $h\left(\boldsymbol{x}\right)$ as:
\begin{equation}
    \label{eq:min-max_rescaling}
    \bar{h}\left(\boldsymbol{x}; \mathcal{X}\right) = \frac{h\left(\boldsymbol{x}\right) - h_{min}\left(\mathcal{X}\right)}{\Delta H\left(\mathcal{X}\right)},
\end{equation}
where\footnote{Note that, to avoid dividing by zero in \eqref{eq:min-max_rescaling}, $\Delta H\left(\mathcal{X}\right)$ can be set to $h_{max}\left(\mathcal{X}\right)$ or $1$ whenever $h_{min}\left(\mathcal{X}\right) = h_{max}\left(\mathcal{X}\right) \neq 0$ or $h_{min}\left(\mathcal{X}\right) = h_{max}\left(\mathcal{X}\right) = 0$ respectively.}:
\begin{subequations}
    \begin{align}
        h_{min}\left(\mathcal{X}\right)  & = \min{\boldsymbol{x}_i \in \mathcal{X}}h(\boldsymbol{x}_i),       \\
        h_{max}\left(\mathcal{X}\right)  & = \max{\boldsymbol{x}_i \in \mathcal{X}}h(\boldsymbol{x}_i),       \\
        \Delta H\left(\mathcal{X}\right) & = h_{max}\left(\mathcal{X}\right)-h_{min}\left(\mathcal{X}\right).
    \end{align}
\end{subequations}
The objective of min-max rescaling is to obtain a function with range $[0, 1]$, i.e. we would like to have $\bar{h}:\mathbb{R}^{n} \to [0, 1]$. Clearly, the quality of the normalization depends on the information brought by the samples contained inside $\mathcal{X}$, as pointed out in the following Remark.
\begin{remark}
    \label{rem:Quality_of_min-max_rescaling}
    We can observe that:
    \begin{enumerate}
        \item If $\mathcal{X}$ contains the global minimizer(s) and maximizer(s) of $h\left(\boldsymbol{x}\right)$, then $\bar{h}\left(\boldsymbol{x}\right)$ defined as in \eqref{eq:min-max_rescaling} effectively has codomain $[0, 1]$,
        \item Otherwise, we can only ensure that $0 \leq \bar{h}\left(\boldsymbol{x}_i\right) \leq 1, \forall \boldsymbol{x}_i \in \mathcal{X}$.
        \item In general, if we increase the amount of distinct samples in $\mathcal{X}$, then the rescaling of $h\left(\boldsymbol{x}\right)$ gets better (or, worst case, stays the same).
    \end{enumerate}
\end{remark}
Going back to the problem of rescaling the \IDW{} distance function $z_N\left(\boldsymbol{x}\right)$ in \eqref{eq:Inverse_distance_weighting_distance_function}, if we were to apply \eqref{eq:min-max_rescaling} using the set of previously evaluated samples $\mathcal{X}$ in \eqref{eq:sample_set_X}, then it would not be effective since $z_N\left(\boldsymbol{x}_i\right) = 0, \forall \boldsymbol{x}_i \in \mathcal{X}$ (see Proposition \ref{prop:global_maximizers_of_IDW_distance}). Instead, we have opted to generate a sufficiently expressive \important{augmented sample set} $\mathcal{X}_{aug} \supset \mathcal{X}$ and perform min-max normalization using $\mathcal{X}_{aug}$ instead of $\mathcal{X}$.

Consider the general case described in Section \ref{subsubsec:Stationary_points_for_IDW_distance_function}. Then,  the augmented sample set $\mathcal{X}_{aug}$ can be built in the following fashion: 
\begin{enumerate}
    \item Partition the points in $\mathcal{X}$ \eqref{eq:sample_set_X} into different clusters. Here, for simplicity, we  fix a-priori the number $K_{aug} \in \mathbb{N}$ of clusters and apply $K$-means clustering \cite{lloyd1982least, hastie2009elements, bishop2006pattern} to obtain the sets $\mathcal{X}^{(1)}, \ldots, \mathcal{X}^{(K_{aug})}$;
    \item Compute the centroids of each cluster, using \eqref{eq:Centroids_and_midpoint_of_clusters:centroid}, and group them inside the set $\mathcal{X}_c = \left\{\boldsymbol{x}_{\boldsymbol{c}}^{(1)}, \ldots, \boldsymbol{x}_{\boldsymbol{c}}^{(K_{aug})} \right\}$;
    \item Calculate all the midpoints $\boldsymbol{x_\mu}$ between each possible couple of centroids $\boldsymbol{x}_{\boldsymbol{c}}^{(i)}, \boldsymbol{x}_{\boldsymbol{c}}^{(j)} \in \mathcal{X}_c, \boldsymbol{x}_{\boldsymbol{c}}^{(i)} \neq \boldsymbol{x}_{\boldsymbol{c}}^{(j)},$ using \eqref{eq:Centroids_and_midpoint_of_clusters:midpoint}; 
    \item Build the augmented sample set as $\mathcal{X}_{aug} = \mathcal{X} \cup \mathcal{X}_\mu$, where $\mathcal{X}_\mu$ is the set which groups all the previously computed midpoints.
\end{enumerate}
Clearly, as highlighted by \eqref{eq:min-max_rescaling} and Remark \ref{rem:Quality_of_min-max_rescaling}, if $\mathcal{X}_{aug}$ contains points that are close (or equal) to the global minimizer(s) and maximizer(s) of $z_N\left(\boldsymbol{x}\right)$ in \eqref{eq:Inverse_distance_weighting_distance_function}, then the quality of the min-max rescaling of the \IDW{} distance function improves.

Algorithm \ref{alg:Augmented_sample_set} formalizes these steps while also taking into consideration the case $\lvert\mathcal{X}\rvert \leq K_{aug}$ (for which no clustering is performed). Note that we also include the bounds $\boldsymbol{l}$ and $\boldsymbol{u}$ inside $\mathcal{X}_{c}$ and $\mathcal{X}_{aug}$ for two reasons: (i) $\boldsymbol{l}$ or $\boldsymbol{u}$ might actually be the solutions of Problem \eqref{eq:Minimization_of_IDW_distance_function_optimization_problem_simplified}
\footnote{We could add all $2^n$ vertices of the box defined by the bound constraints $\left\{\boldsymbol{x}: \boldsymbol{l} \leq \boldsymbol{x} \leq \boldsymbol{u}\right\}$. However, we have preferred to include only $\boldsymbol{l}$ and $\boldsymbol{u}$ to avoid increasing the cardinality of the augmented sample set, especially in the case of high-dimensional problems.}
and (ii) given that we also want to rescale $\hat{f}_N\left(\boldsymbol{x}\right)$ in \eqref{eq:RBF_surrogate_model}, adding additional points to the augmented sample set improves the quality of min-max normalization (see Remark \ref{rem:Quality_of_min-max_rescaling}). Notice that the number of points contained inside $\mathcal{X}_{aug}$ obtained from Algorithm \ref{alg:Augmented_sample_set} is:
\begin{equation*}
    \lvert\mathcal{X}_{aug}\rvert = \lvert\mathcal{X}\rvert + \begin{pmatrix}K_{aug} + 2 \\
        2
    \end{pmatrix}
    + 2.
\end{equation*}
Therefore, to avoid excessively large augmented sample sets, $K_{aug}$ needs to be chosen appropriately.

As a final remark, we point out that we could perform min-max normalization in \eqref{eq:min-max_rescaling} by using the real minima and maxima of $z_N\left(\boldsymbol{x}\right)$ in \eqref{eq:Inverse_distance_weighting_distance_function} and $\hat{f}_N\left(\boldsymbol{x}\right)$ in \eqref{eq:RBF_surrogate_model}, which can be obtained by solving four additional global optimization problems. However, we have preferred to stick with the proposed heuristic way since we are not interested in an extremely accurate rescaling and, also, to avoid potentially large overhead times due to solving additional global optimization problems.
\begin{algorithm}[!htb]
    \setalgorithmstretch
    \setalgorithmfontsize
    \caption{Computation of $\mathcal{X}_{aug}$ for min-max rescaling}
    \label{alg:Augmented_sample_set}
    \textbf{Input}:
    \begin{algparams}
        \item Set of samples $\mathcal{X}$ in \eqref{eq:sample_set_X};
        \item Number of clusters $K_{aug} \in \mathbb{N}$;
        \item Lower bounds $\boldsymbol{l} \in \mathbb{R}^n$ and upper bounds $\boldsymbol{u} \in \mathbb{R}^n$ of Problem \eqref{eq:preference-based_optimization_problem}.
    \end{algparams}

    \textbf{Output}:
    \begin{algparams}
        \item Augmented sample set $\mathcal{X}_{aug}\supset \mathcal{X}$.
    \end{algparams}
    \hrule
    \begin{algorithmic}[1]
        \If{$\lvert \mathcal{X} \rvert > K_{aug}$}
        \State Perform $K$-means clustering \cite{lloyd1982least, hastie2009elements, bishop2006pattern} to
        group the samples in $\mathcal{X}$ into $K_{aug}$ clusters $\mathcal{X}^{(1)}, \ldots, \mathcal{X}^{(K_{aug})}$
        \State Compute the set of centroids $\mathcal{X}_c$ using \eqref{eq:Centroids_and_midpoint_of_clusters:centroid}:
        \begin{flalign*}
            \hskip\parindent
            \mathcal{X}_c =\left\{\boldsymbol{x}_{\boldsymbol{c}}^{(k)}: \boldsymbol{x}_{\boldsymbol{c}}^{(k)} = \frac{\sum_{\boldsymbol{x}_i \in \mathcal{X}^{(k)}} \boldsymbol{x}_i}{\lvert\mathcal{X}^{(k)}\rvert}, k = 1, \ldots, K_{aug}\right\} &  &
        \end{flalign*}
        \Else
        \State Set $\mathcal{X}_c = \mathcal{X}$
        \EndIf
        \State Add the bounds to $\mathcal{X}_c$: $\mathcal{X}_c = \mathcal{X}_c \cup \left\{ \boldsymbol{l},\boldsymbol{u}\right\}$
        \State Group all possible couples of $\mathcal{X}_c$ (without repetition):
        \begin{flalign*}
            \mathcal{X}_{couples} =\left\{ \left(\boldsymbol{x}_{\boldsymbol{c}}^{(i)},\boldsymbol{x}_{\boldsymbol{c}}^{(j)}\right):\boldsymbol{x}_{\boldsymbol{c}}^{(i)},\boldsymbol{x}_{\boldsymbol{c}}^{(j)}\in\mathcal{X}_c,\boldsymbol{x}_{\boldsymbol{c}}^{(i)}\neq\boldsymbol{x}_{\boldsymbol{c}}^{(j)}\right\} &  &
        \end{flalign*}
        \State Calculate the midpoints between all the couples inside $\mathcal{X}_{couples}$, obtaining the set:
        \begin{flalign*}
            \mathcal{X}_{\mu}=\left\{ \boldsymbol{x_{\mu}}:\boldsymbol{x_{\mu}}=\frac{\boldsymbol{x}_{\boldsymbol{c}}^{(i)}+\boldsymbol{x}_{\boldsymbol{c}}^{(j)}}{2},\left(\boldsymbol{x}_{\boldsymbol{c}}^{(i)},\boldsymbol{x}_{\boldsymbol{c}}^{(j)}\right)\in\mathcal{X}_{couples}\right\} &  &
        \end{flalign*}
        \State Build the augmented sample set as $\mathcal{X}_{aug}=\mathcal{X}\cup\mathcal{X}_{\mu}\cup\left\{ \boldsymbol{l},\boldsymbol{u}\right\} $
    \end{algorithmic}
\end{algorithm}

\subsection{Definition of the acquisition function}
\label{subsec:Acquisition_function}
In this Section, we take advantage of the results on the stationary points of $z_N\left(\boldsymbol{x}\right)$ presented in Section \ref{subsubsec:Stationary_points_for_IDW_distance_function} to rescale the surrogate model and the exploration function. In particular, we define the following acquisition function:
\begin{equation}
    \label{eq:Acquisition_function_GLISp-r}
    a_N\left(\boldsymbol{x}\right) = \delta \cdot \hat{\bar{f}}_N\left(\boldsymbol{x}; \mathcal{X}_{aug}\right) + \left(1 - \delta\right) \cdot \bar{z}_N\left( \boldsymbol{x}; \mathcal{X}_{aug}\right),
\end{equation}
where $\hat{f}_N\left(\boldsymbol{x}\right)$ in \eqref{eq:RBF_surrogate_model} and $z_N\left(\boldsymbol{x}\right)$ in \eqref{eq:Inverse_distance_weighting_distance_function} have been rescaled using min-max normalization as in \eqref{eq:min-max_rescaling} and $\mathcal{X}_{aug}$ is generated by Algorithm \ref{alg:Augmented_sample_set}. $\delta \in \left[0, 1\right]$ is the exploration-exploitation trade-off weight; also note that $\delta = 0$ corresponds to pure exploration, while $\delta = 1$ results in pure exploitation. $a_N\left(\boldsymbol{x}\right)$ in \eqref{eq:Acquisition_function_GLISp-r} is similar to the acquisition function of \MSRSmethod{} (for black-box optimization) but here we use an ad-hoc augmented sample set instead of a randomly generated one and a different exploration function. We will refer to the algorithm that we will propose in Section \ref{sec:GLISp-r_and_convergence}, which uses $a_N\left(\boldsymbol{x}\right)$ in \eqref{eq:Acquisition_function_GLISp-r}, as \GLISprmethod{}, where the \quotes{\texttt{r}} highlights the min-max rescaling performed for the acquisition function.

A comparison between the terms of the acquisition functions in \eqref{eq:Acquisition_function_GLISp-r} \linebreak (\GLISprmethod{}) and in \eqref{eq:Acquisition_function_GLISp} (\GLISpmethod{}) is depicted in \figname{} \ref{fig:Comparison_between_acquisition_function_terms}. As the number of samples $N$ increases, the absolute values of $z_N\left(\boldsymbol{x}\right)$ in \eqref{eq:Inverse_distance_weighting_distance_function} get progressively smaller (see Section \ref{subsec:Improving_exploration_capabilities_of_GLISp}) and simply dividing $\hat{f}_N\left(\boldsymbol{x}\right)$ by $\Delta \hat{F}$ as in \eqref{eq:Acquisition_function_GLISp} is not enough to make the exploration and exploitation contributions comparable. Thus, unless $\delta$ in \eqref{eq:Acquisition_function_GLISp} is dynamically varied in between iterations of \GLISpmethod{}, solving Problem \eqref{eq:Next_sample_search_no_black-box_constraints} with $a_N\left(\boldsymbol{x}\right)$ in \eqref{eq:Acquisition_function_GLISp} becomes similar to performing pure exploitation. This, in turn, can make  \GLISpmethod{} more prone to getting stuck on local minima of Problem \eqref{eq:preference-based_optimization_problem} with no way of escaping (especially if the surrogate model is not expressive enough to capture the location of the global minimizer). Vice-versa, by performing min-max rescaling as proposed in \eqref{eq:Acquisition_function_GLISp-r}, the exploration and exploitation contributions stay comparable throughout the whole optimization process and approximately assume the same range. For this reason, it is also more straightforward to define $\delta$ in \eqref{eq:Acquisition_function_GLISp-r} compared to the weight in \eqref{eq:Acquisition_function_GLISp}.

From Propositions \ref{prop:Differentiability_of_surrogate_function} and \ref{prop:Differentiability_of_IDW_distance_function}, we can immediately deduce the following results on the differentiability of $a_N\left(\boldsymbol{x}\right)$ in \eqref{eq:Acquisition_function_GLISp-r}.
\begin{proposition}
    The acquisition function $a_N\left(\boldsymbol{x}\right)$ in \eqref{eq:Acquisition_function_GLISp-r}
    is differentiable everywhere provided that the surrogate model $\hat{f}_N\left(\boldsymbol{x}\right)$ in \eqref{eq:RBF_surrogate_model} is differentiable everywhere.
\end{proposition}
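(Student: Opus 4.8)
The plan is to reduce the differentiability of $a_N(\boldsymbol{x})$ to that of its two constituent terms, exploiting the fact that the min-max rescaling in \eqref{eq:min-max_rescaling} is nothing more than an affine transformation with constant coefficients. First I would observe that, once $\mathcal{X}_{aug}$ has been generated by Algorithm \ref{alg:Augmented_sample_set}, the quantities $h_{min}(\mathcal{X}_{aug})$, $h_{max}(\mathcal{X}_{aug})$ and $\Delta H(\mathcal{X}_{aug})$ appearing in \eqref{eq:min-max_rescaling} are fixed real numbers, not functions of $\boldsymbol{x}$; moreover, by the footnote accompanying \eqref{eq:min-max_rescaling}, $\Delta H(\mathcal{X}_{aug})$ is always strictly positive (it is replaced by $h_{max}$ or $1$ in the degenerate cases). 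Hence $\bar{h}(\boldsymbol{x}; \mathcal{X}_{aug}) = \frac{1}{\Delta H(\mathcal{X}_{aug})}\, h(\boldsymbol{x}) - \frac{h_{min}(\mathcal{X}_{aug})}{\Delta H(\mathcal{X}_{aug})}$ is obtained from $h(\boldsymbol{x})$ by multiplying by a nonzero constant and subtracting a constant, an operation that neither creates nor destroys differentiability at any point of $\mathbb{R}^{n}$. So $\bar{h}$ is differentiable everywhere if and only if $h$ is.

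Next I would apply this observation twice. With $h = \hat{f}_N$: by hypothesis $\hat{f}_N(\boldsymbol{x})$ in \eqref{eq:RBF_surrogate_model} is differentiable everywhere, hence so is $\hat{\bar{f}}_N(\boldsymbol{x}; \mathcal{X}_{aug})$. With $h = z_N$: the \IDW{} distance function $z_N(\boldsymbol{x})$ in \eqref{eq:Inverse_distance_weighting_distance_function} is differentiable everywhere by Proposition \ref{prop:Differentiability_of_IDW_distance_function}, hence so is $\bar{z}_N(\boldsymbol{x}; \mathcal{X}_{aug})$. Finally, $a_N(\boldsymbol{x})$ in \eqref{eq:Acquisition_function_GLISp-r} is the linear combination $\delta \cdot \hat{\bar{f}}_N(\boldsymbol{x}; \mathcal{X}_{aug}) + (1-\delta)\cdot \bar{z}_N(\boldsymbol{x}; \mathcal{X}_{aug})$ with constant real coefficients $\delta$ and $1-\delta$; a linear combination of functions each differentiable everywhere is itself differentiable everywhere, with gradient $\nabla_{\boldsymbol{x}} a_N(\boldsymbol{x}) = \delta \cdot \nabla_{\boldsymbol{x}} \hat{\bar{f}}_N(\boldsymbol{x}; \mathcal{X}_{aug}) + (1-\delta)\cdot \nabla_{\boldsymbol{x}} \bar{z}_N(\boldsymbol{x}; \mathcal{X}_{aug})$. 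This closes the argument.

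The proof is essentially bookkeeping, so there is no substantive obstacle; the only point that deserves a moment's care — and the closest thing to a pitfall — is making explicit that the rescaling constants depend only on the fixed finite set $\mathcal{X}_{aug}$ and not on $\boldsymbol{x}$, so that $\bar{h}$ really is affine in $h$ and the standard ``affine image / linear combination of everywhere-differentiable maps is everywhere differentiable'' rules apply verbatim. For completeness one could remark that, unlike Proposition \ref{prop:Differentiability_of_surrogate_function}, the statement is only a one-directional implication: a converse would fail in general, since a non-differentiable $\hat{\bar{f}}_N$ cannot be compensated by the always-differentiable term $\bar{z}_N$, but this is not needed for the claim as stated.
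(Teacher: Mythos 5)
Your argument is correct and follows exactly the route the paper intends: the paper gives no explicit proof but states that the result follows immediately from Propositions \ref{prop:Differentiability_of_surrogate_function} and \ref{prop:Differentiability_of_IDW_distance_function}, i.e.\ from the everywhere-differentiability of $\hat{f}_N\left(\boldsymbol{x}\right)$ and $z_N\left(\boldsymbol{x}\right)$ together with the fact that the min-max rescaling in \eqref{eq:min-max_rescaling} and the combination in \eqref{eq:Acquisition_function_GLISp-r} are affine with constant coefficients. Your write-up simply makes that one-line deduction explicit, and the point you flag (that the rescaling constants depend only on the fixed set $\mathcal{X}_{aug}$ and not on $\boldsymbol{x}$) is the right detail to check.
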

At each iteration of \GLISprmethod{} we find the next candidate for evaluation, i.e. $\boldsymbol{x}_{N+1}$, by solving Problem \eqref{eq:Next_sample_search_no_black-box_constraints} with the acquisition function in \eqref{eq:Acquisition_function_GLISp-r}. It is possible to use derivative-based optimization solvers since $a_N\left(\boldsymbol{x}\right)$ in \eqref{eq:Acquisition_function_GLISp-r} is differentiable everywhere. In general, the acquisition function is \important{multimodal} and thus it is better to employ a global optimization procedure. Moreover, $a_N\left(\boldsymbol{x} \right)$ is cheap to evaluate; therefore, we are not particularly concerned on its number of function evaluations.
\begin{figure}[!htb]
    \subfloat{
        \centering
        \includegraphics[width=.5\textwidth]{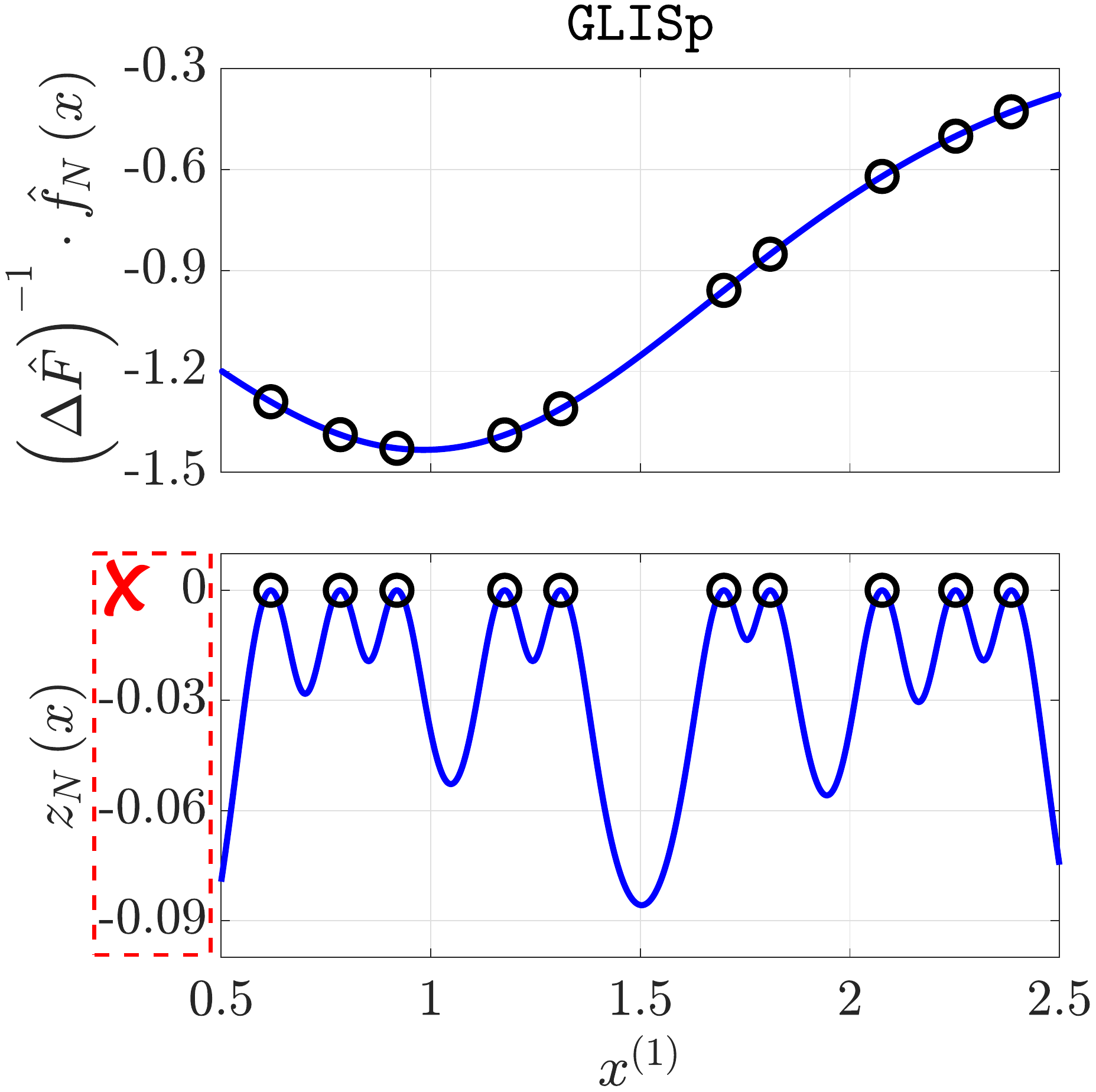}
        \includegraphics[width=.5\textwidth]{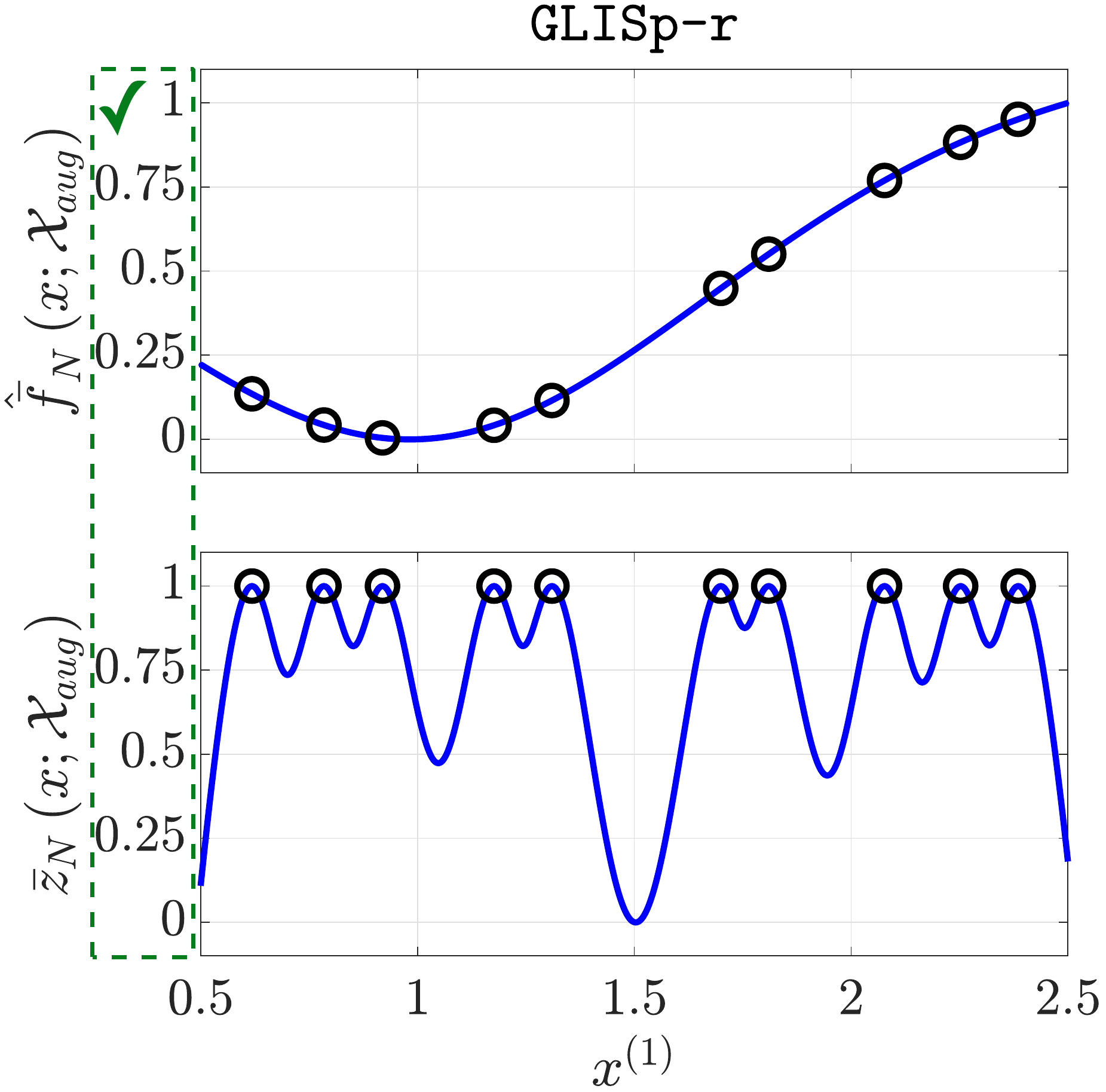}
    }


    \caption{
        \label{fig:Comparison_between_acquisition_function_terms}
        Comparison between the terms of the acquisition functions in \eqref{eq:Acquisition_function_GLISp} (left, \GLISpmethod{}) and in \eqref{eq:Acquisition_function_GLISp-r} (right, \GLISprmethod{}). 
        The scoring function $f\left(\boldsymbol{x}\right)$ is the \gramacyandleeGOP{} function while $\lvert\mathcal{X}\rvert = N = 10$. For \GLISprmethod{}, the number of clusters used to build $\mathcal{X}_{aug}$ through Algorithm \ref{alg:Augmented_sample_set} is $K_{aug} = 5$. Notice how, for \GLISpmethod{}, the exploration function $z_N\left(\boldsymbol{x}\right)$ is not comparable with the rescaled surrogate $\frac{\hat{f}_N\left(\boldsymbol{x}\right)}{\Delta \hat{F}}$ since it assumes values that are one to two orders of magnitude lower.
    }
\end{figure}
\subsubsection{Greedy $\delta$-cycling}
\label{subsubsec:Greedy_delta_cycling}
Many black-box optimization algorithms explicitly vary their emphasis on  exploration and exploitation in between iterations. Just to cite a few:
\begin{itemize}
    \item \GutmannRBF{} uses an acquisition function that is a measure of \quotes{bumpiness} of the \RBF{} surrogate that depends upon a target value $\tau$ to aim for.
          The author suggests to cycle the values of $\tau$ between $\tau = \min{\boldsymbol{x} \in \Omega} \hat{f}_N\left(\boldsymbol{x}\right)$  (local search) and $\tau = -\infty$ (global search).
    \item The authors of \MSRSmethod{}, which uses an acquisition function that is similar to \eqref{eq:Acquisition_function_GLISp-r}, propose to cycle between different values of $\delta$ as to prioritize exploration or exploitation more.
    \item In algorithm \SOSAmethod{}, which is a revisitation of \MSRSmethod{}, the weight $\delta$ is chosen in a random fashion at each iteration. Moreover, the authors adopt a greedy strategy, i.e. $\delta$ is kept unaltered until it fails to find a significantly better solution. 
\end{itemize}
In 
\GLISpmethod{}, the weight $\delta$ for $a_N\left(\boldsymbol{x}\right)$ in \eqref{eq:Acquisition_function_GLISp} is kept constant throughout the whole optimization process. Also, defining some form of cycling for such hyper-parameter can be quite complex 
since the additive terms that compose the acquisition function are not always comparable. In this work, we propose a strategy that is in between that of \MSRSmethod{} and \SOSAmethod{}, which we refer to as \important{greedy $\delta$-cycling}. We define a sequence of $N_{cycle} \in \mathbb{N}$ weights to cycle:
\begin{equation}
    \label{eq:Delta_cycle}
    \Delta_{cycle} = \langle \delta_0, \ldots, \delta_{N_{cycle - 1}}\rangle.
\end{equation}
$\Delta_{cycle}$ should contain values that are well spread within the $\left[0, 1\right]$ range as to properly alternate between local and global search. Greedy $\delta$-cycling operates as follows. Suppose that, at iteration $k$ of \GLISprmethod{}, we have at our disposal $\lvert\mathcal{X}\rvert = N$ samples and denote the trade-off weight $\delta$ in \eqref{eq:Acquisition_function_GLISp-r} as $\delta\left(k\right)$ to highlight the iteration number. Furthermore, assume $\delta \left(k\right) = \delta_j \in \Delta_{cycle}$, which is used to find the new candidate sample $\boldsymbol{x}_{N+1}$ at iteration $k$ by solving Problem \eqref{eq:Next_sample_search_no_black-box_constraints}. Then, if $\boldsymbol{x}_{N+1} \succ \boldsymbol{x_{best}}\left(N\right)$ (i.e. there has been some improvement), the trade-off weight is kept unchanged, $\delta \left(k+1\right) = \delta \left(k\right) = \delta_j$. Otherwise, we cycle the values in $\Delta_{cycle}$, obtaining $\delta \left(k+1\right) = \delta_{\left(j+1\right)\text{mod}{N_{cycle}}}$. Thus:
\begin{equation}
    \label{eq:greedy_delta_cycling}
    \delta \left(k+1\right) = \begin{cases}
        \delta_j                                       & \text{if $\boldsymbol{x}_{N+1} \succ \boldsymbol{x_{best}}\left(N\right)$}    \\
        \delta_{\left(j+1\right)\text{mod}{N_{cycle}}} & \text{if $\boldsymbol{x_{best}}\left(N\right) \succsim \boldsymbol{x}_{N+1}$}
    \end{cases}
\end{equation}
In Section \ref{sec:GLISp-r_and_convergence}, we will discuss the choice of the cycling sequence in \eqref{eq:Delta_cycle} more in detail and also cover its relationship with the global convergence of \GLISprmethod{}.
\section{Algorithm \texttt{GLISp-r} and convergence}
\label{sec:GLISp-r_and_convergence}
Algorithm \ref{alg:GLISp-r_algorithm} describes each step of the \GLISprmethod{} procedure. As with any \linebreak surrogate-based method, \texttt{GLISp-r} starts from an initial set of samples $\mathcal{X},  \lvert\mathcal{X}\rvert = N_{init} \in \mathbb{N}, N_{init} \geq 2$, generated by a \important{space-filling experimental design} \cite{vu2017surrogate}, such as a Latin Hypercube Designs (\LHD{}) \cite{mckay2000comparison}. The sets $\mathcal{B}$ in \eqref{eq:Set_of_preferences_B} and $\mathcal{S}$ in \eqref{eq:Mapping_set_for_preferences_S}, as well as the initial best candidate $\boldsymbol{x_{best}}\left(N_{init}\right)$, are obtained by asking the decision-maker to compare the samples in $\mathcal{X}$ \eqref{eq:sample_set_X} as proposed in Algorithm \ref{alg:queries_for_preference_based_optimization}, which prompts $M = N_{init} - 1$ queries. Once the initial sampling phase is concluded, the new candidate samples are obtained by solving Problem \eqref{eq:Next_sample_search_no_black-box_constraints}. 
The procedure is stopped once $\lvert\mathcal{X}\rvert = N_{max}$, where $N_{max} \in \mathbb{N}$ is a budget specified by the user. Overall, the decision-maker is queried $N_{max} - 1$ times.

\GLISprmethod{} follows the same scheme of \GLISpmethod{} but, additionally, at each iteration, builds the augmented sample set $\mathcal{X}_{aug}$ using Algorithm \ref{alg:Augmented_sample_set}. New candidate samples are found by minimizing the acquisition function in \eqref{eq:Acquisition_function_GLISp-r} instead of $a_N\left(\boldsymbol{x}\right)$ in \eqref{eq:Acquisition_function_GLISp}. Moreover, $\delta$ is cycled as proposed in Section \ref{subsubsec:Greedy_delta_cycling}. Similarly to \GLISpmethod{}, the shape parameter $\epsilon$ of the surrogate model in \eqref{eq:RBF_surrogate_model} is recalibrated at certain iterations of the algorithm, as specified by the set $\mathcal{K}_{R} \subseteq \left\{1, \ldots, N_{max} - N_{init}\right\}$, using grid-search Leave One Out Cross-Validation \linebreak (\LOOCV{}). In particular, at each iteration $k \in \mathcal{K}_{R}$, $\epsilon$ for $\hat{f}_N\left(\boldsymbol{x}\right)$ in \eqref{eq:RBF_surrogate_model} is selected among a set $\mathcal{E}_{\LOOCV{}}$ of possible shape parameters as the one whose corresponding surrogate preference function $\surrpreffun{N}\left(\boldsymbol{x}_i, \boldsymbol{x}_j\right)$ in \eqref{eq:RBF_surrogate_preference_function} classifies (out-of-sample) most of the preferences in $\mathcal{B}$ and $S$ correctly \cite{Bemporad2021}. Lastly, consistently with \GLISpmethod{}, Problem \eqref{eq:preference-based_optimization_problem} is rescaled so that each decision variable assumes the $\left[-1, 1\right]$ range (at least inside $\Omega$).
\begin{algorithm}[!htb]
    \setalgorithmstretch
    \setalgorithmfontsize
    \caption{\GLISprmethod{}}
    \label{alg:GLISp-r_algorithm}
    \textbf{Input}:
    \begin{algparams}
        \item Constraint set $\Omega$ of Problem \eqref{eq:preference-based_optimization_problem};
        \item Number of initial samples $N_{init} \in \mathbb{N}, N_{init} \geq 2$;
        \item Budget $N_{max} \in \mathbb{N}, N_{max} > N_{init}$;
        \item Hyper-parameters for the surrogate model $\hat{f}_N\left(\boldsymbol{x}\right)$ in \eqref{eq:RBF_surrogate_model}, i.e. shape parameter $\epsilon \in \mathbb{R}_{>0}$, radial function $\varphi(\cdot)$, regularization parameter $\lambda \in \mathbb{R}_{\geq 0}$ and tolerance $\sigma \in \mathbb{R}_{>0}$;
        \item Cycling sequence $\Delta_{cycle}$ in \eqref{eq:Delta_cycle} for the acquisition function $a_N\left( \boldsymbol{x}\right)$ in \eqref{eq:Acquisition_function_GLISp-r};
        \item Number of clusters $K_{aug} \in \mathbb{N}$ for the augmented sample set $\mathcal{X}_{aug}$ generated by Algorithm \ref{alg:Augmented_sample_set};
        \item Possible shape parameters $\mathcal{E}_{\LOOCV{}}$ for the recalibration of the surrogate model $\hat{f}_N\left(\boldsymbol{x}\right)$ in \eqref{eq:RBF_surrogate_model};
        \item Set of indexes for the recalibration of the surrogate model $\hat{f}_N\left(\boldsymbol{x}\right)$ in \eqref{eq:RBF_surrogate_model}, i.e. $\mathcal{K}_{R} \subseteq \left\{1, \ldots, N_{max} - N_{init}\right\}$.
    \end{algparams}

    \textbf{Output}:
    \begin{algparams}
        \item Best sample obtained by the procedure $\boldsymbol{x_{best}}\left(N_{max}\right)$.
    \end{algparams}
    \hrule
    \begin{algorithmic}[1]
        \State Rescale Problem \eqref{eq:preference-based_optimization_problem} as in \GLISpmethod{}
        \State Generate a set $\mathcal{X}$ in \eqref{eq:sample_set_X} of $N_{init}$ starting points using a \LHD{} \cite{mckay2000comparison}
        \State Evaluate the samples in $\mathcal{X}$ by querying the decision-maker as in Algorithm \ref{alg:queries_for_preference_based_optimization}, obtaining the sets $\mathcal{B}$ in \eqref{eq:Set_of_preferences_B} and $\mathcal{S}$ in \eqref{eq:Mapping_set_for_preferences_S}, as well as the best candidate $\boldsymbol{x_{best}}\left(N_{init}\right)$
        \State Set $N = N_{init}$ and $M = \lvert\mathcal{B}\rvert$
        \State Set $\delta = \delta_0 \in \Delta_{cycle}$ and $j = 0$
        \For{$k = 1, 2, \ldots, N_{max} - N_{init}$}
        \IfThen{$k \in \mathcal{K}_{R}$}{recalibrate the surrogate model $\hat{f}_N\left( \boldsymbol{x}\right)$ in \eqref{eq:RBF_surrogate_model} as in \GLISpmethod{}}
        \State Build the surrogate model $\hat{f}_N\left(\boldsymbol{x}\right)$ in \eqref{eq:RBF_surrogate_model} from $\mathcal{X}, \mathcal{B}$ and $\mathcal{S}$ by solving Problem \eqref{eq:Beta_computation_optimization_problem}
        \State Generate the augmented sample set $\mathcal{X}_{aug}$ through Algorithm \ref{alg:Augmented_sample_set}
        \State Look for the next candidate sample $\boldsymbol{x}_{N+1}$ by solving Problem \eqref{eq:Next_sample_search_no_black-box_constraints} with $a_N\left( \boldsymbol{x}\right)$ in \eqref{eq:Acquisition_function_GLISp-r}
        \State Let the decision-maker express the preference $b_{M+1} = \preffun \left(\boldsymbol{x}_{N+1}, \boldsymbol{x_{best}}\left(N\right)\right)$
        \If{$b_{M+1} = - 1$ (improvement, $\boldsymbol{x}_{N+1} \succ \boldsymbol{x_{best}}\left(N\right)$)}
        \State Set $\boldsymbol{x_{best}}\left(N + 1\right) = \boldsymbol{x}_{N+1}$
        \Else
        \State Keep $\boldsymbol{x_{best}}\left(N + 1\right) = \boldsymbol{x_{best}}\left(N\right)$
        \State Set $\delta = \delta_{\left(j + 1\right)\text{mod}{N_{cycle}}} \in \Delta_{cycle}$ (greedy $\delta$-cycling) and $j = j + 1$
        \EndIf
        \State Update the set of samples $\mathcal{X}$ and the preference information in the sets $\mathcal{B}$ and $\mathcal{S}$
        \State Set $N = N + 1$ and $M = M + 1$
        \EndFor
    \end{algorithmic}
\end{algorithm}
\begin{algorithm}[!htb]
    \setalgorithmstretch
    \setalgorithmfontsize
    \caption{Initial queries for preference-based optimization}
    \label{alg:queries_for_preference_based_optimization}
    \textbf{Input}:
    \begin{algparams}
        \item Initial set of samples $\mathcal{X}$, $\lvert\mathcal{X}\rvert = N_{init} \in \mathbb{N}, N_{init} \geq 2$, in \eqref{eq:sample_set_X}.
    \end{algparams}

    \textbf{Output}:
    \begin{algparams}
        \item Set of preferences $\mathcal{B}$ in \eqref{eq:Set_of_preferences_B};
        \item Mapping set $\mathcal{S}$ in \eqref{eq:Mapping_set_for_preferences_S};
        \item Initial best sample $\boldsymbol{x_{best}}\left(N_{init}\right)$.
    \end{algparams}
    \hrule
    \begin{algorithmic}[1]
        \State Initialize the best candidate as $\boldsymbol{x_{best}}\left(1\right) = \boldsymbol{x}_{1}$,
        $i_{best} = 1$
        \State Initialize the sets $\mathcal{B}$ and $\mathcal{S}$: $\mathcal{B} = \emptyset$ and $\mathcal{S} = \emptyset$
        \For{$i = 2$ to $\lvert\mathcal{X}\rvert = N_{init}$}
        \State Let the decision-maker express a preference between $\boldsymbol{x_{best}}\left(i-1\right)$ and $\boldsymbol{x}_{i}$, obtaining $b = \pi \left(\boldsymbol{x_{best}}\left(i-1\right), \boldsymbol{x}_i \right)$
        \State Update the sets $\mathcal{B}$ and $\mathcal{S}$: $\mathcal{B} = \mathcal{B} \cup \{b\}$ and $\mathcal{S} = \mathcal{S} \cup \{(i_{best}, i)\}$
        \If{$b = 1$ (i.e. $ \boldsymbol{x}_i \succ \boldsymbol{x_{best}}\left(i-1\right)$)}
        \State Update the best candidate, $\boldsymbol{x_{best}}\left(i\right) = \boldsymbol{x}_{i}$ and $i_{best} = i$
        \Else
        \State Keep the best candidate unaltered, $\boldsymbol{x_{best}}\left(i\right) = \boldsymbol{x_{best}}\left(i-1\right)$
        \EndIf
        \EndFor
    \end{algorithmic}
\end{algorithm}
\subsection{Global convergence of \GLISprmethod{}}
\label{subsec:Convergence}
Whenever we are dealing with any global optimization algorithm, it is possible to guarantee its convergence to the global minimizer of Problem \eqref{eq:preference-based_optimization_problem} by checking if the conditions of the following Theorem hold.
\begin{theorem}[Convergence of a global optimization algorithm \cite{torn1989global}]
    \label{theo:convergence_theorem_torn}
    Let $\Omega \subset \mathbb{R}^{n}$ be a compact set. An algorithm converges to the global minimum of every continuous function $f:\mathbb{R}^n \to \mathbb{R}$ over $\Omega$ if and only if its sequence of iterates,
    \begin{equation*}
        \langle\boldsymbol{x}_i\rangle_{i \geq 1} = \langle\boldsymbol{x}_1, \boldsymbol{x}_2, \ldots \rangle,
    \end{equation*}
    is dense in $\Omega$.
\end{theorem}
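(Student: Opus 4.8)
The statement is a classical characterization of globally convergent optimization schemes \cite{torn1989global}; the plan is to prove the two implications separately, after fixing the convention that \quotes{convergence to the global minimum} means that the best-so-far values $f_k^{\star} = \operatorname{min}_{i = 1, \ldots, k} f(\boldsymbol{x}_i)$ satisfy $f_k^{\star} \to f^{\star}$ as $k \to \infty$, where $f^{\star} = \min{\boldsymbol{x} \in \Omega} f(\boldsymbol{x})$ exists because $\Omega$ is compact and $f$ is continuous (Extreme Value Theorem; cf. the discussion around Proposition \ref{prop:existence_of_preference_relation_maximum}). Throughout I assume, as the procedure guarantees, that every iterate lies in $\Omega$, so that $\langle f_k^{\star} \rangle$ is nonincreasing and bounded below by $f^{\star}$; only the value of its limit is at stake.

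\emph{Sufficiency (density $\Rightarrow$ convergence).} Let $\boldsymbol{x}^{\star} \in \Omega$ attain $f^{\star}$ and fix $\varepsilon > 0$. By continuity of $f$ at $\boldsymbol{x}^{\star}$ there is $\rho > 0$ such that $\euclideannorm{\boldsymbol{x} - \boldsymbol{x}^{\star}} < \rho$ implies $f(\boldsymbol{x}) < f^{\star} + \varepsilon$. Since $\langle \boldsymbol{x}_i \rangle$ is dense in $\Omega$, $\boldsymbol{x}^{\star}$ lies in the closure of $\{\boldsymbol{x}_i : i \geq 1\}$, so some iterate $\boldsymbol{x}_{i_0}$ satisfies $\euclideannorm{\boldsymbol{x}_{i_0} - \boldsymbol{x}^{\star}} < \rho$, whence $f_k^{\star} \leq f(\boldsymbol{x}_{i_0}) < f^{\star} + \varepsilon$ for all $k \geq i_0$. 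As $\varepsilon > 0$ is arbitrary, $\limsup_k f_k^{\star} \leq f^{\star}$, and together with $f_k^{\star} \geq f^{\star}$ this gives $\lim_k f_k^{\star} = f^{\star}$. This is the only direction needed in Section \ref{sec:GLISp-r_and_convergence}: it reduces the convergence of \GLISprmethod{} to showing that the samples it generates are dense in $\Omega$.

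\emph{Necessity (convergence for every continuous $f$ $\Rightarrow$ density).} I would argue by contraposition. If $\langle \boldsymbol{x}_i \rangle$ is not dense in $\Omega$, pick $\boldsymbol{y} \in \Omega$ that does not belong to the closed set $\overline{\{\boldsymbol{x}_i : i \geq 1\}}$; then $\rho := \operatorname{dist}(\boldsymbol{y}, \overline{\{\boldsymbol{x}_i\}}) > 0$, so no iterate lies within distance $\rho$ of $\boldsymbol{y}$. Take the continuous function $f(\boldsymbol{x}) = \operatorname{min}\left\{\euclideannorm{\boldsymbol{x} - \boldsymbol{y}}, \rho\right\}$, which is nonnegative, vanishes only at $\boldsymbol{y}$, and equals $\rho$ outside the open ball $B(\boldsymbol{y}, \rho)$. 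Since $\boldsymbol{y} \in \Omega$ we have $f^{\star} = 0$, while $f(\boldsymbol{x}_i) = \rho$ for every $i$; hence $f_k^{\star} \equiv \rho \not\to 0$ and the scheme does not converge on this instance, which is exactly the contrapositive.

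I expect the only delicate point to be bookkeeping rather than analysis: stating precisely what \quotes{the algorithm's sequence of iterates} means for an adaptive procedure, whose trial points depend on the preferences observed so far, so that in the necessity argument the constructed $f$ and the sequence $\langle \boldsymbol{x}_i \rangle$ refer to one consistent run. The clean formulation — and the one that suffices here — is the per-run equivalence: for a fixed continuous $f$, density of the generated sequence is equivalent to $f_k^{\star} \to f^{\star}$, with necessity read as \quotes{if the procedure can generate a non-dense sequence on some continuous instance, then it is not globally convergent.} I would make this convention explicit at the outset, since the application in Section \ref{sec:GLISp-r_and_convergence} only invokes the sufficiency direction, i.e. a proof that \GLISprmethod{} produces iterates dense in $\Omega$.
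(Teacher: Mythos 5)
The paper does not actually prove Theorem \ref{theo:convergence_theorem_torn}: it is imported as a known result from \cite{torn1989global} and used as a black box, with only the sufficiency direction ever invoked (inside the proof of Theorem \ref{theo:convergence_of_GLISp-r}, via Theorem \ref{theo:convergence_theorem_cors}). So there is no in-paper proof to compare against; what can be assessed is whether your argument is sound, and it essentially is. The sufficiency direction is complete: continuity of $f$ at a global minimizer gives a ball on which $f < f^{\star} + \varepsilon$, density puts an iterate in that ball, and monotonicity of the best-so-far values does the rest. The necessity direction via the truncated distance function $f(\boldsymbol{x}) = \operatorname{min}\left\{\euclideannorm{\boldsymbol{x}-\boldsymbol{y}}, \rho\right\}$ is the classical witness and is correct as stated. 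Two caveats on your closing discussion. First, the phrase \quotes{per-run equivalence} overstates the situation: for a single fixed continuous $f$, density of the iterates is sufficient but not necessary for convergence (any sequence minimizes a constant $f$), so the \quotes{only if} genuinely requires the quantifier over \emph{all} continuous functions --- which your contrapositive does use, so the proof itself is unaffected. Second, for an adaptive procedure the iterate sequence depends on the objective through the observed comparisons, so in the necessity argument one must either restrict to sampling schemes whose iterates do not depend on $f$ or verify that the constructed witness $f$ is consistent with the run that produced the non-dense sequence; you flag this bookkeeping issue but do not resolve it. Neither caveat touches the way the theorem is used in Section \ref{sec:GLISp-r_and_convergence}, which relies only on the sufficiency direction you prove in full.
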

In what follows, for the sake of clarity, we denote:
\begin{itemize}
    \item $\mathcal{X}_\infty$ as the set containing all the elements of $\langle\boldsymbol{x}_i\rangle_{i \geq 1}$ (infinite sequence),
    \item $\mathcal{X}_k \subseteq \mathcal{X}_\infty$ as the set containing all the elements of $\langle\boldsymbol{x}_i\rangle_{i = 1}^k$, which is a subsequence of $\langle\boldsymbol{x}_i\rangle_{i \geq 1}$ composed of its first $k \in \mathbb{N}$ entries.
\end{itemize}
To prove the convergence of \GLISprmethod{}, we also make use of the following Theorem, which gives us a sufficient condition that ensures the denseness of the sequence of iterates produced by any global optimization algorithm.
\begin{theorem}[A sufficient condition for the denseness of $\mathcal{X}_\infty$ \cite{regis2005constrained}]
    \label{theo:convergence_theorem_cors}
    Let $\Omega$ be a compact subset of $\mathbb{R}^n$ and let $\langle\boldsymbol{x}_{i}\rangle_{i\geq1}$ be the sequence of iterates generated by an algorithm $\texttt{A}$ (when run indefinitely). Suppose that there exists a strictly increasing sequence of positive integers $\langle i_{t}\rangle_{t\geq1}, i_{t}\in\mathbb{N},$ such that $\langle\boldsymbol{x}_{i}\rangle_{i\geq1}$ satisfies the following condition for some $\alpha\in(0,1]$:
    \begin{equation}
        \label{eq:convergence_theorem_CORS_condition}
        \min{1\leq i\leq i_{t}-1} \euclideannorm{\boldsymbol{x}_{i_{t}}-\boldsymbol{x}_{i}} \geq\alpha\cdot d_{\Omega}\left(\mathcal{X}_{i_t - 1}\right), \quad \forall t \in \mathbb{N},
    \end{equation}
    where:
    \begin{equation}
        \label{eq:convergence_theorem_CORS_d_Omega}
        d_{\Omega}\left(\mathcal{X}_{i_t - 1}\right) =\max{\boldsymbol{x}\in\Omega}\min{1\leq i\leq i_{t}-1}\euclideannorm{\boldsymbol{x}-\boldsymbol{x}_{i}}.
    \end{equation}
    Then, $\mathcal{X}_{\infty}$ generated by $\texttt{A}$ is dense in $\Omega$. 
\end{theorem}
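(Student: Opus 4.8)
The plan is to argue by contradiction, tracking the \emph{covering radius} (dispersion) $d_\Omega(\mathcal{X}_k)=\max_{\boldsymbol{x}\in\Omega}\min_{1\le i\le k}\euclideannorm{\boldsymbol{x}-\boldsymbol{x}_i}$ of the first $k$ iterates. First I would record two elementary facts: $d_\Omega(\mathcal{X}_k)$ is finite (because $\Omega$ is bounded) and non-increasing in $k$ (because $\mathcal{X}_k\subseteq\mathcal{X}_{k+1}$); and $\mathcal{X}_\infty$ is dense in $\Omega$ precisely when $d_\Omega(\mathcal{X}_k)\to 0$. The nontrivial half of this equivalence is the one I actually need for the contradiction: if $\mathcal{X}_\infty$ is \emph{not} dense, pick $\bar{\boldsymbol{x}}\in\Omega$ outside the closure of $\mathcal{X}_\infty$, so that $\euclideannorm{\bar{\boldsymbol{x}}-\boldsymbol{x}_i}\ge\rho$ for every $i$ and some $\rho>0$; then $d_\Omega(\mathcal{X}_k)\ge\min_{1\le i\le k}\euclideannorm{\bar{\boldsymbol{x}}-\boldsymbol{x}_i}\ge\rho$ for all $k$, i.e. the dispersion stays bounded away from zero.

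Next I would feed this into hypothesis \eqref{eq:convergence_theorem_CORS_condition}. Assuming, towards a contradiction, that $\mathcal{X}_\infty$ is not dense, we have $d_\Omega(\mathcal{X}_{i_t-1})\ge\rho$, and therefore, for every $t$ with $i_t\ge 2$,
\[
\min_{1\le i\le i_t-1}\euclideannorm{\boldsymbol{x}_{i_t}-\boldsymbol{x}_i}\ \ge\ \alpha\cdot d_\Omega(\mathcal{X}_{i_t-1})\ \ge\ \alpha\rho\ >\ 0.
\]
Since $\langle i_t\rangle_{t\ge1}$ is strictly increasing in $\mathbb{N}$, for $s<t$ one has $i_s\le i_{t-1}\le i_t-1$, so $\boldsymbol{x}_{i_s}$ is among $\boldsymbol{x}_1,\dots,\boldsymbol{x}_{i_t-1}$ and the bound above gives $\euclideannorm{\boldsymbol{x}_{i_t}-\boldsymbol{x}_{i_s}}\ge\alpha\rho$. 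Hence the subsequence $\langle\boldsymbol{x}_{i_t}\rangle_t$ (restricted to $t$ large enough that $i_t\ge2$) consists of infinitely many points of $\Omega$ that are pairwise at distance at least $\alpha\rho>0$.

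Finally I would invoke compactness of $\Omega\subset\mathbb{R}^n$: a compact set is totally bounded (equivalently, sequentially compact), so it cannot contain an infinite $\alpha\rho$-separated set — any sequence in $\Omega$ has a Cauchy subsequence, whose terms eventually get closer than $\alpha\rho$. This contradiction forces $d_\Omega(\mathcal{X}_k)\to 0$, i.e. $\mathcal{X}_\infty$ is dense in $\Omega$, which is the claim. I do not expect a serious obstacle: the argument is short. The one place to be attentive is the index bookkeeping — condition \eqref{eq:convergence_theorem_CORS_condition} controls the distance of $\boldsymbol{x}_{i_t}$ from \emph{all} earlier iterates, and it is precisely this (via $i_s\le i_t-1$) that turns the hypothesis into the separation of the $\langle\boldsymbol{x}_{i_t}\rangle$ subsequence that clashes with compactness; I would also double-check the (easy) implication ``dispersion bounded away from $0$ $\Rightarrow$ not dense'' used to set up the contradiction.
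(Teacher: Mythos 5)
The paper does not prove this theorem itself --- it is imported verbatim from \cite{regis2005constrained} and used as a black box --- so there is no in-paper proof to compare against; your argument is correct and is essentially the standard one from that source: assume non-density, extract $\bar{\boldsymbol{x}}\in\Omega$ with $\euclideannorm{\bar{\boldsymbol{x}}-\boldsymbol{x}_i}\ge\rho$ so that $d_{\Omega}(\mathcal{X}_{k})\ge\rho$ for all $k$, feed this into \eqref{eq:convergence_theorem_CORS_condition} to make $\langle\boldsymbol{x}_{i_t}\rangle_t$ an infinite $\alpha\rho$-separated subset of the compact (hence totally bounded) set $\Omega$, and contradict. Your index bookkeeping ($i_s\le i_t-1$ for $s<t$, and the harmless exclusion of a possible $i_1=1$) is exactly right, and the only implicit assumption --- that the iterates lie in $\Omega$ --- is guaranteed by how the algorithm generates them, so the proof stands.
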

The aforementioned Theorem has been used to prove the global convergence of \CORSmethod{}, a black-box optimization procedure. Clearly, if $\Omega$ is compact and \eqref{eq:convergence_theorem_CORS_condition} holds for some $\alpha\in(0,1]$ (making $\mathcal{X}_{\infty}$ dense in $\Omega$) then, due to Theorem \ref{theo:convergence_theorem_torn}, algorithm $\texttt{A}$ converges to the global minimum of every continuous function $f\left(\boldsymbol{x}\right)$ over $\Omega$. For what concerns the preference-based framework, we need to ensure that the scoring function $f\left(\boldsymbol{x}\right)$, which represents the preference relation $\succsim$ on $\Omega$, is continuous. Theorem \ref{theo:debreu_utility_representation} gives us necessary conditions on $\succsim$ to achieve such property. Furthermore, Proposition \ref{prop:existence_of_preference_relation_maximum} can be used to check the existence of a $\succsim$-maximum of $\Omega$. The next Theorem addresses the global convergence of \GLISprmethod{} (Algorithm \ref{alg:GLISp-r_algorithm}).
\begin{theorem}[Convergence of \GLISprmethod{}]
    \label{theo:convergence_of_GLISp-r}
    Let $\Omega \subset \mathbb{R}^n$ be a compact set and $\succsim$ be a continuous preference relation on $\Omega$ of a rational (as in Definition \ref{def:rational_decision_maker}) human decision-maker. Then, provided that $\exists \delta_j \in \Delta_{cycle}$ in \eqref{eq:Delta_cycle} such that $\delta_j = 0$ and $N_{max} \to \infty$, \GLISprmethod{} converges to the global minimum of Problem \eqref{eq:preference-based_optimization_problem} for any choice of its remaining hyper-parameters\footnote{Formally, we should also ensure that the surrogate model $\hat{f}_N\left(\boldsymbol{x}\right)$ in \eqref{eq:RBF_surrogate_model} is continuous. However, that is the case for any of the radial basis functions reported in Section \ref{subsec:Surrogate_model}.}.
\end{theorem}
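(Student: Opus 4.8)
The plan is to reduce the statement to a denseness property of the iterates and then to extract that denseness from the pure-exploration steps forced by the value $0\in\Delta_{cycle}$. First I would record that the scoring function is continuous: since $\succsim$ is a continuous preference relation on the compact set $\Omega$ of a rational decision-maker, Theorem~\ref{theo:debreu_utility_representation} yields a continuous utility $\utilfun:\Omega\to\mathbb{R}$ representing it, so $f=-\utilfun$ is continuous, and Proposition~\ref{prop:existence_of_preference_relation_maximum} guarantees that Problem~\eqref{eq:preference-based_optimization_problem} admits a global minimizer (the \RBF{} surrogate is itself continuous for every radial function of Section~\ref{subsec:Surrogate_model}). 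By Theorem~\ref{theo:convergence_theorem_torn} it then suffices to prove that the sequence $\langle\boldsymbol{x}_i\rangle_{i\geq1}$ produced by \GLISprmethod{} with $N_{max}\to\infty$ is dense in $\Omega$; the finitely many initial \LHD{} samples are immaterial for a denseness claim, so I would concentrate on the samples returned by Problem~\eqref{eq:Next_sample_search_no_black-box_constraints}.

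To obtain denseness I would invoke the sufficient condition of Theorem~\ref{theo:convergence_theorem_cors}, i.e. exhibit a strictly increasing sequence $\langle i_t\rangle_{t\geq1}$ and some $\alpha\in(0,1]$ satisfying \eqref{eq:convergence_theorem_CORS_condition}. The natural choice is to let $\langle i_t\rangle$ enumerate the iterations with $\delta=0$: there $\boldsymbol{x}_{i_t}$ solves the pure-exploration problem \eqref{eq:Minimization_of_IDW_distance_function_optimization_problem}, because for $\delta=0$ the surrogate term of \eqref{eq:Acquisition_function_GLISp-r} vanishes and min--max rescaling is a strictly increasing affine map that leaves the minimizer unchanged, so $\boldsymbol{x}_{i_t}\in\argmin{\boldsymbol{x}\in\Omega}z_{i_t-1}(\boldsymbol{x})$. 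I would first check that this index set is infinite: by the greedy $\delta$-cycling rule \eqref{eq:greedy_delta_cycling} the weight moves one step along the cyclic list $\Delta_{cycle}$ exactly at each non-improving query, so infinitely many non-improving queries force the value $0$ to recur infinitely often. The complementary case --- only finitely many non-improving queries --- must be excluded: there $\delta$ is eventually frozen at some $\delta_j$ and $f(\boldsymbol{x_{best}}(N))$ is eventually strictly decreasing, hence convergent since $f$ is bounded below on the compact $\Omega$; if $\delta_j=0$ we are already in the pure-exploration regime, while if $\delta_j>0$ I would argue that the exploration term $\bar{z}_N$ in \eqref{eq:Acquisition_function_GLISp-r}, which attains its maximum at the already-sampled incumbent, eventually pushes the minimizer of $a_N$ away from the accumulating cluster around the incumbent and produces a non-improving query, a contradiction. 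Making this dichotomy watertight is one of the delicate points.

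Next comes the quantitative core: checking \eqref{eq:convergence_theorem_CORS_condition} along the pure-exploration subsequence. Fix such an $i_t$, put $N=i_t-1$, $r(\boldsymbol{x})=\min{\boldsymbol{x}_i\in\mathcal{X}_N}\euclideannorm{\boldsymbol{x}-\boldsymbol{x}_i}$, and let $\boldsymbol{x}_d\in\Omega$ attain $d_{\Omega}(\mathcal{X}_N)=\max{\boldsymbol{x}\in\Omega}r(\boldsymbol{x})$. Since $z_N$ is a strictly increasing function of $S(\boldsymbol{x})=\sum_{i=1}^{N}w_i(\boldsymbol{x})=\sum_{i=1}^{N}\euclideannorm{\boldsymbol{x}-\boldsymbol{x}_i}^{-2}$, optimality of $\boldsymbol{x}_{i_t}$ gives $S(\boldsymbol{x}_{i_t})\leq S(\boldsymbol{x}_d)$, and bounding $S(\boldsymbol{x}_{i_t})$ below by any single one of its terms yields $\euclideannorm{\boldsymbol{x}_{i_t}-\boldsymbol{x}_i}\geq S(\boldsymbol{x}_d)^{-1/2}$ for every $i$, hence $r(\boldsymbol{x}_{i_t})\geq S(\boldsymbol{x}_d)^{-1/2}$ (in particular $\boldsymbol{x}_{i_t}\notin\mathcal{X}_N$). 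Thus \eqref{eq:convergence_theorem_CORS_condition} would follow from an upper bound of the form $S(\boldsymbol{x}_d)\leq\alpha^{-2}\,d_{\Omega}(\mathcal{X}_N)^{-2}$ with $\alpha$ independent of $t$.

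This last bound is where I expect the real difficulty to lie. The crude estimate $\euclideannorm{\boldsymbol{x}_d-\boldsymbol{x}_i}\geq d_{\Omega}(\mathcal{X}_N)$ for all $i$ only gives $S(\boldsymbol{x}_d)\leq N\,d_{\Omega}(\mathcal{X}_N)^{-2}$, i.e. $\alpha=N^{-1/2}$, which decays as $N\to\infty$; a uniform $\alpha$ must be earned from the geometry of $\Omega$ and from the way pure exploration spreads the samples. The route I would pursue is a contradiction argument: if $d_{\Omega}(\mathcal{X}_k)\not\to0$ then, by monotonicity of $k\mapsto d_{\Omega}(\mathcal{X}_k)$, some ball $B\subset\Omega$ of a fixed radius $\rho>0$ is never sampled; as the number of samples in $\Omega\setminus B$ grows, $S$ becomes large throughout $\Omega\setminus B$ (it blows up near every sample), whereas on $B$ it stays $O\!\left(N\rho^{-2}\right)$, so the global minimum of $S$ over $\Omega$ --- which is exactly where the next pure-exploration iterate is placed --- must eventually fall inside $B$, contradicting the assumption, and turning this comparison of growth rates into the precise inequality is what fixes $\alpha$. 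I expect this geometric step, together with the care needed for the case where $\Omega$ is just the box and the deepest hole sits at one of its vertices (already anticipated by the inclusion of $\boldsymbol{l},\boldsymbol{u}$ in $\mathcal{X}_{aug}$ in Algorithm~\ref{alg:Augmented_sample_set}), to absorb most of the effort; once it is available, the theorem follows by chaining Theorems~\ref{theo:debreu_utility_representation}, \ref{theo:convergence_theorem_torn} and~\ref{theo:convergence_theorem_cors}.
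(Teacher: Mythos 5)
Your architecture coincides with the paper's: Theorem \ref{theo:debreu_utility_representation} and Proposition \ref{prop:existence_of_preference_relation_maximum} give a continuous scoring function and a well-posed problem, Theorem \ref{theo:convergence_theorem_torn} reduces everything to denseness of the iterates, and Theorem \ref{theo:convergence_theorem_cors} is invoked through the $\delta=0$ (pure-exploration) iterations. The two places where you depart from the paper are exactly the two places you flag as delicate, and there your proposal has genuine gaps. First, the dichotomy: the paper never needs to prove that $\delta=0$ recurs infinitely often. It builds the index sequence for Theorem \ref{theo:convergence_theorem_cors} by \emph{merging} the improvement indices with the $\delta=0$ indices: if non-improving queries occur infinitely often, the greedy rule \eqref{eq:greedy_delta_cycling} passes through $0$ infinitely often; if they occur only finitely often, the improvement indices themselves form an infinite sequence of points $\boldsymbol{x}_{i_{t''}}\notin\mathcal{X}_{i_{t''}-1}$ (a strictly preferred point cannot already have been evaluated), and these are fed into the same denseness condition, see \eqref{eq:GLISp_r_proof_9}. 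Your attempted contradiction for the branch ``$\delta_j>0$ frozen, every query improves'' is both unnecessary and unconvincing as stated: nothing forces a non-improving query in that regime, so you cannot exclude it, and your proof would be left without a dense subsequence in that case.

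Second, the quantitative core. Your bound $\min{1\leq i\leq i_t-1}\euclideannorm{\boldsymbol{x}_{i_t}-\boldsymbol{x}_i}\geq S(\boldsymbol{x}_d)^{-1/2}\geq N^{-1/2}\, d_{\Omega}\left(\mathcal{X}_{i_t-1}\right)$ is correct, and you are right that it does not by itself yield a $t$-independent $\alpha$; but the geometric repair you propose is only a sketch and is not obviously salvageable in the form stated (the samples may accumulate very unevenly on $\Omega\setminus B$, so $S$ need not exceed $N\rho^{-2}$ everywhere outside the unsampled ball $B$, and the minimizer of $S$ could land in a second sparsely sampled region rather than in $B$). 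The paper does not perform this analysis at all: from $\boldsymbol{x}_{i_t}\notin\mathcal{X}_{i_t-1}$ it directly asserts a $t$-independent constant $\fixed{\alpha}'>0$ bounding the minimum distance from below, see \eqref{eq:GLISp_r_proof_5}, and converts it into condition \eqref{eq:convergence_theorem_CORS_condition} via $0<\fixed{\alpha}'\leq d_{\Omega}\left(\mathcal{X}_{i_t-1}\right)$. So on this point you have identified a real difficulty that the paper's proof resolves by assertion rather than by the argument you envision; as submitted, however, your proof is incomplete precisely there.
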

\begin{proof}
    Compactness of $\Omega$, continuity of $\succsim$ on $\Omega$ and rationality of the decision-maker are conditions that ensure the existence of a solution for Problem \eqref{eq:preference-based_optimization_problem} (cf. Theorem \ref{theo:debreu_utility_representation} and Proposition \ref{prop:existence_of_preference_relation_maximum}).

    Consider the sequence of iterates $\langle\boldsymbol{x}_i\rangle_{i \geq 1}$ produced by Algorithm \ref{alg:GLISp-r_algorithm}. The first $N_{init} \in \mathbb{N}, N_{init} \geq 2,$ elements of $\langle\boldsymbol{x}_i\rangle_{i \geq 1}$ are obtained by the \LHD{} \cite{mckay2000comparison}. Instead, each $\boldsymbol{x}_i \in \mathcal{X}_{\infty}, i > N_{init},$ is selected as the solution of Problem \eqref{eq:Next_sample_search_no_black-box_constraints} with $a_N\left(\boldsymbol{x}\right)$ in \eqref{eq:Acquisition_function_GLISp-r}.

    Now, suppose that $\delta$ in \eqref{eq:Acquisition_function_GLISp-r} is cycled regardless of the improvement that the new candidate samples might bring (non-greedy cycling). Denote the exploration-exploitation trade-off weight at iteration $k$ of Algorithm \ref{alg:GLISp-r_algorithm} as $\delta\left(k\right)$ and assume that $\delta(k) = \delta_j \in \Delta_{cycle}$. Then, the cycling is performed as:
    \begin{equation}
        \label{eq:GLISp_r_proof_1}
        \delta \left(k + 1\right) = \delta_{\left(j + 1\right)\text{mod}{N_{cycle}}}, \quad \forall k \in \mathbb{N},
    \end{equation}
    instead of \eqref{eq:greedy_delta_cycling}. Without loss of generality, suppose that $\Delta_{cycle}$ in \eqref{eq:Delta_cycle} is defined as:
    \begin{equation*}
        \delta_j \neq 0, \forall j = 0, \ldots, N_{cycle} - 2, \text{ and } \delta_{N_{cycle} - 1} = 0.
    \end{equation*}
    Then, every $N_{cycle}$ iterations Algorithm \ref{alg:GLISp-r_algorithm} looks for a new candidate sample by minimizing the (min-max rescaled) \IDW{} distance function in \eqref{eq:Inverse_distance_weighting_distance_function}, regardless of the surrogate model in \eqref{eq:RBF_surrogate_model}, see $a_N\left( \boldsymbol{x}\right)$ in \eqref{eq:Acquisition_function_GLISp-r} and Problem \eqref{eq:Next_sample_search_no_black-box_constraints}. In practice, minimizing $\bar{z}_N\left(\boldsymbol{x}; \mathcal{X}_{aug}\right)$ over $\Omega$ is equivalent to solving $\argmin{\boldsymbol{x} \in \Omega} z_N\left(\boldsymbol{x}\right)$ since scaling and shifting the \IDW{} distance function does not change its minimizers \cite{nocedal1999numerical}. We define the following strictly increasing sequence of positive integers:
    \begin{equation}
        \label{eq:GLISp_r_proof_2}
        \langle i_{t'} \rangle_{t' \geq 1} =  \langle N_{init} + t' \cdot N_{cycle}\rangle_{t' \geq 1},
    \end{equation}
    which is such that:
    \begin{align}
        \label{eq:GLISp_r_proof_4}
        \boldsymbol{x}_{i_{t'}} & = \argmin{\boldsymbol{x}} z_{i_{t'} - 1}\left(\boldsymbol{x}\right), \quad \forall t' \in \mathbb{N} \\
        \text{s.t.}             & \quad\boldsymbol{x}\in\Omega.\nonumber
    \end{align}
    Now, recall from Proposition \ref{prop:global_maximizers_of_IDW_distance} that each $\boldsymbol{x}_i \in \mathcal{X}_{i_{t'} - 1}$ is  a global maximizer of Problem \eqref{eq:GLISp_r_proof_4}. Furthermore, $z_{i_{t'} - 1}\left(\boldsymbol{x}\right)$ in \eqref{eq:Inverse_distance_weighting_distance_function} is differentiable everywhere and thus continuous (see Proposition \ref{prop:Differentiability_of_IDW_distance_function}). Then, by the Extreme Value Theorem \cite{audet2017derivative}, Problem \eqref{eq:GLISp_r_proof_4} admits at least a solution. Hence, we can conclude that:
    \begin{equation}
        \label{eq:GLISp_r_proof_5}
        \boldsymbol{x}_{i_{t'}} \notin \mathcal{X}_{i_{t'} - 1} \implies \exists \fixed{\alpha}' \in \mathbb{R}_{>0} \text{ such that } \min{1\leq i\leq i_{t'}-1} \euclideannorm{\boldsymbol{x}_{i_{t'}}-\boldsymbol{x}_{i}} \geq \fixed{\alpha}', \quad \forall t' \in \mathbb{N}.
    \end{equation}
    Clearly, due to how new candidate samples are sought (i.e. by minimizing some acquisition function over $\Omega$), we have that (recall \eqref{eq:convergence_theorem_CORS_d_Omega}):
    \begin{equation}
        \label{eq:convergence_theorem_CORS_condition_extended}
        \alpha'\cdot d_{\Omega}\left(\mathcal{X}_{i_{t'} - 1}\right) \leq \min{1\leq i\leq i_{t'}-1} \euclideannorm{\boldsymbol{x}_{i_{t'}}-\boldsymbol{x}_{i}} \leq d_{\Omega}\left(\mathcal{X}_{i_{t'} - 1}\right), \quad \forall t' \in \mathbb{N},
    \end{equation}
    for some $\alpha' \in (0, 1]$. Then, by combining \eqref{eq:GLISp_r_proof_5} and \eqref{eq:convergence_theorem_CORS_condition_extended}, we get:
    \begin{equation}
        \label{eq:GLISp_r_proof_7}
        0 < \fixed{\alpha}' \leq d_{\Omega}\left(\mathcal{X}_{i_{t'} - 1}\right), \quad \forall t' \in \mathbb{N}.
    \end{equation}
    Therefore, $\exists \alpha' \in (0, 1]$ such that $\fixed{\alpha}' = \alpha' \cdot d_{\Omega}\left(\mathcal{X}_{i_{t'} - 1}\right)$ which satisfies the condition \eqref{eq:convergence_theorem_CORS_condition} of Theorem \ref{theo:convergence_theorem_cors}, $\forall t' \in \mathbb{N}$. Thus,  Algorithm \ref{alg:GLISp-r_algorithm} with $\delta$ cycled as in \eqref{eq:GLISp_r_proof_1} 
    produces a sequence of iterates that is dense in $\Omega$.

    Next, consider the greedy $\delta$-cycling strategy proposed in Section \ref{subsubsec:Greedy_delta_cycling} and for an arbitrary choice of $\Delta_{cycle}$ in \eqref{eq:Delta_cycle}. Let us examine the case in \eqref{eq:greedy_delta_cycling} when $\delta$ is kept unchanged from an iteration of Algorithm \ref{alg:GLISp-r_algorithm} to the other. Denote as $\langle i_{t''} \rangle_{t'' = 1}^{t_{max}''}, t_{max}'' \in \mathbb{N},$ the sequence of indexes of those samples that improve upon the current best candidate, resulting in no change in the exploration-exploitation trade-off weight. We have that:
    \begin{equation*}
        \label{eq:GLISp_r_proof_8}
        \boldsymbol{x}_{i_{t''}} \succ \boldsymbol{x_{best}}\left(i_{t''} - 1\right), \quad \forall t'': 1 \leq t'' \leq t_{max}''.
    \end{equation*}
    Clearly, $\boldsymbol{x}_{i_{t''}} \notin \mathcal{X}_{i_{t''} - 1}$ since it is strictly preferred to all the other samples in $\mathcal{X}_{i_{t'' - 1}}$.
    Thus, we could define a positive constant $\fixed{\alpha}'' \in \mathbb{R}_{> 0}$ analogously to \eqref{eq:GLISp_r_proof_5}: 
    \begin{equation}
        \begin{split}
            \label{eq:GLISp_r_proof_9}
            \boldsymbol{x}_{i_{t''}} \notin \mathcal{X}_{i_{t''} - 1} \implies &\exists \fixed{\alpha}'' \in \mathbb{R}_{>0} \text{ such that } \min{1\leq i\leq i_{t''}-1} \euclideannorm{\boldsymbol{x}_{i_{t''}}-\boldsymbol{x}_{i}} \geq \fixed{\alpha}'',\\
            &\forall t'': 1 \leq t'', \leq t_{max}''.
        \end{split}
    \end{equation}
    Finally, let us consider the greedy $\delta$-cycling strategy in \eqref{eq:greedy_delta_cycling} as whole and assume, as in Theorem \ref{theo:convergence_of_GLISp-r}, that $\exists \delta_j \in \Delta_{cycle}$ in \eqref{eq:Delta_cycle} such that $\delta_j = 0$. We can build a strictly increasing  sequence of positive integers $\langle i_{t}\rangle_{t \geq 1}$ by merging:
    \begin{itemize}
        \item The elements of the sequence $\langle i_{t''} \rangle_{t'' = 1}^{t_{max}''}$, which are the indexes of those samples that improve upon the best candidates $\boldsymbol{x_{best}}\left(i_{t''} - 1\right), \forall t'': 1 \leq t'' \leq t_{max}''$;
        \item The elements of the sequence $\langle i_{t'} \rangle_{t' \geq 1}$, which constitute the indexes of those samples found by solving the pure exploration problem in \eqref{eq:GLISp_r_proof_4}. Note that, unless Algorithm \ref{alg:GLISp-r_algorithm} always improves upon its current best candidate (in which case $\langle i_{t''} \rangle_{t'' = 1}^{t_{max}''}$ is actually infinite and hence $\mathcal{X}_{\infty}$ is dense in $\Omega$), Problem \eqref{eq:Next_sample_search_no_black-box_constraints} with $a_N\left(\boldsymbol{x}\right)$ in \eqref{eq:Acquisition_function_GLISp-r} is solved using $\delta = 0$ infinitely often, although not necessarily every $N_{cycle}$ iterations as in \eqref{eq:GLISp_r_proof_2}.
    \end{itemize}
    Hence, we can select a positive constant $\fixed{\alpha} \in \mathbb{R}_{>0}$ as $\fixed{\alpha} = \min{} \left\{\fixed{\alpha}', \fixed{\alpha}''\right\}$ which, analogously to \eqref{eq:GLISp_r_proof_7}, is such that:
    \begin{equation}
        \label{eq:GLISp_r_proof_10}
        0 < \fixed{\alpha} \leq d_{\Omega}\left(\mathcal{X}_{i_{t} - 1}\right), \quad \forall t \in \mathbb{N}.
    \end{equation}
    Therefore, $\exists \alpha \in (0, 1]$ such that $\fixed{\alpha} = \alpha \cdot d_{\Omega}\left(\mathcal{X}_{i_{t} - 1}\right)$ which satisfies the  condition \eqref{eq:convergence_theorem_CORS_condition} of Theorem \ref{theo:convergence_theorem_cors}, $\forall t \in \mathbb{N}$. Thus, \GLISprmethod{} with $\delta$ in \eqref{eq:Acquisition_function_GLISp-r} cycled following the greedy $\delta$-cycling strategy in \eqref{eq:greedy_delta_cycling} converges to the global minimum of Problem \eqref{eq:preference-based_optimization_problem}.
\end{proof}
Most preference-based response surface methods, such as the ones in \cite{Bemporad2021,brochu2007active,gonzalez2017preferential,benavoli2021preferential}, do not address their convergence to the global minimum of Problem \eqref{eq:preference-based_optimization_problem}. In this work, we have shown that, by leveraging some results from the utility theory literature (see Section \ref{sec:Problem_formulation}), we can find sufficient conditions on the preference relation of the human decision-maker ($\succsim$ on $\Omega$) that guarantee the existence of a solution for Problem \eqref{eq:preference-based_optimization_problem} and allow us to analyze the convergence of any preference-based procedure as we would in the global optimization framework.

We conclude this Section with two Remarks on Theorem \ref{theo:convergence_of_GLISp-r}. The first deals with the importance of adding a zero entry inside $\Delta_{cycle}$ in \eqref{eq:Delta_cycle}, while the second addresses the selection of the cycling set.
\begin{remark}
    The omission of a zero entry inside $\Delta_{cycle}$ in \eqref{eq:Delta_cycle} does not necessarily preclude the global convergence of Algorithm \ref{alg:GLISp-r_algorithm} on all possible preference-based optimization problems. For example, if $f\left( \boldsymbol{x}\right)$ is a constant function then, after we evaluate the first sample $\boldsymbol{x}_1$, any other point brings no improvement (i.e. we would have $\boldsymbol{x}_i \sim \boldsymbol{x}_1, \forall i > 1$). The caveat is that, if $\nexists \delta_j \in \Delta_{cycle}$ such that $\delta_j = 0$, then the sequence $\langle i_{t''} \rangle_{t'' = 1}^{t_{max}''}$ for which \eqref{eq:GLISp_r_proof_9} holds  is likely to be finite (i.e. \texttt{GLISp-r} does not improve upon its current best candidate infinitely often). Moreover, differently from Problem \eqref{eq:GLISp_r_proof_4}, we have no guarantee that the solutions of:
    \begin{align*}
        \boldsymbol{x}_{N + 1} & = \argmin{\boldsymbol{x}} a_{N}\left(\boldsymbol{x}\right), \quad \forall N: N \neq i_{t''} - 1, 1 \leq t'' \leq t_{max}'' \\
        \text{s.t.}            & \quad\boldsymbol{x}\in\Omega,\nonumber
    \end{align*}
    with $a_N\left(\boldsymbol{x}\right)$ defined as in \eqref{eq:Acquisition_function_GLISp-r} and for $\delta \neq 0$, are not already present in $\mathcal{X}_N$. 
    Therefore, we cannot ensure the existence of a strictly increasing sequence of positive integers that is infinite and for which \eqref{eq:convergence_theorem_CORS_condition} holds. Instead, the result in Theorem \ref{theo:convergence_theorem_cors} does not apply for $\langle i_{t''} \rangle_{t'' = 1}^{t_{max}''}$, since the sequence is finite. Consequently, Algorithm \ref{alg:GLISp-r_algorithm} does not necessarily produce a sequence of iterates $\langle \boldsymbol{x}_i \rangle_{i \geq 1}$ that is dense in $\Omega$, preventing its convergence on some (but not all) preference-based optimization problems.
\end{remark}
\begin{remark}
    Theorem \ref{theo:convergence_of_GLISp-r} guarantees that, under some hypotheses, \GLISprmethod{} converges to the global minimum of Problem \eqref{eq:preference-based_optimization_problem}, however it does not give any indication on its convergence rate. In particular, if $\Delta_{cycle}$ is actually $\langle 0 \rangle$, Algorithm \ref{alg:GLISp-r_algorithm} amounts to performing an exhaustive search without considering the information carried by the preferences in $\mathcal{B}$ \eqref{eq:Set_of_preferences_B} and $\mathcal{S}$ \eqref{eq:Mapping_set_for_preferences_S}, which is quite inefficient \cite{audet2017derivative}. Therefore, it is best to include some $\delta_j$'s in $\Delta_{cycle}$ that allow the surrogate model to be taken into consideration. For this reason, we suggest including terms that are well spread within the $\left[0, 1\right]$ range, including a zero entry to ensure the result in Theorem \ref{theo:convergence_of_GLISp-r}. Intuitively, the rate of convergence will be dependent on how well $\hat{f}_{N}\left(\boldsymbol{x}\right)$ in \eqref{eq:RBF_surrogate_model} approximates $f\left(\boldsymbol{x}\right)$ as well as on the choice of $\Delta_{cycle}$ in \eqref{eq:Delta_cycle}.
\end{remark}
\section{Empirical results}
\label{sec:Empirical_results}
In this Section, we compare the performances of algorithms \GLISprmethod{} and \GLISpmethod{} on a variety of benchmark bound-constrained global optimization problems taken from \cite{Bemporad2020,gramacy2012cases,jamil2013literature,mishra2006some}. Consistently with the preference-based optimization literature, we stick to benchmark problems with less than $n = 10$ decision variables \cite{Bemporad2021,brochu2007active,benavoli2021preferential,gonzalez2017preferential}. 
We also consider the revisited version of the \IDW{} distance function in \eqref{eq:Inverse_distance_weighting_distance_function} employed by \CGLISpmethod{}, which is an extension of \GLISpmethod{} proposed by the same authors\footnote{Note that, in this work, we use \CGLISpmethod{} only to test how the \IDW{} distance function in \eqref{eq:Inverse_distance_weighting_distance_function_C-GLISp} compares to the other formulation in \eqref{eq:Inverse_distance_weighting_distance_function}. However, \CGLISpmethod{} has been developed mainly to extend \GLISpmethod{} in order to handle black-box constraints. The different definition of $z_N\left(\boldsymbol{x}\right)$ is only a minor detail of such paper.}. In particular, in \CGLISpmethod{}:
\begin{align}
    \label{eq:Inverse_distance_weighting_distance_function_C-GLISp}
    z_N(\boldsymbol{x}) = & \left(\frac{N}{N_{max}} - 1\right) \cdot \arctan \left( \frac{\sum_{i = 1, i \neq i_{best}\left( N\right)}^N w_i\left(\boldsymbol{x_{best}}\left(N\right)\right)}{\sum_{i = 1}^N w_i\left(\boldsymbol{x}\right)} \right) + \\
                          & - \frac{N}{N_{max}} \cdot \arctan \left( \frac{1}{\sum_{i = 1}^N w_i\left(\boldsymbol{x}\right)} \right), \quad \forall \boldsymbol{x} \in \mathbb{R}^n \setminus \mathcal{X}, \nonumber
\end{align}
while $z_N(\boldsymbol{x}) = 0, \forall \boldsymbol{x} \in \mathcal{X}$. In \eqref{eq:Inverse_distance_weighting_distance_function_C-GLISp}, $i_{best}\left( N\right) \in \mathbb{N}, 1 \leq i_{best}\left( N\right) \leq N,$ represents the index of the best-found candidate when $\lvert \mathcal{X}\rvert = N$. 
In practice, $z_N\left(\boldsymbol{x}\right)$ in \eqref{eq:Inverse_distance_weighting_distance_function_C-GLISp} improves the exploratory capabilities of \GLISpmethod{} without the need to define an alternative acquisition function from the one in \eqref{eq:Acquisition_function_GLISp} (see \cite{zhu2021c}).

We point out that we could also consider the preferential Bayesian optimization algorithm in \cite{brochu2007active} as an additional competitor for \GLISprmethod{}. However, in \cite{Bemporad2021}, the authors show that \texttt{GLISp} outperforms the aforementioned method. As we will see in the next Sections, \GLISprmethod{} exhibits convergence speeds that are similar to those of \GLISpmethod{} and hence we have decided to omit the algorithm in \cite{brochu2007active} from our analysis. Moreover, after preliminary testing, the latter method was proven to not be on par w.r.t. the other competitors.

\subsection{Experimental setup}
All benchmark optimization problems have been solved on a machine with two Intel Xeon E5-2687W @3.00GHz CPUs and 128GB of RAM. \GLISprmethod{} has been implemented in MATLAB. Similarly, we have used the MATLAB code for \texttt{GLISp} provided in \cite{Bemporad2021} (formally, version 2.4 of the software package) and the one for \texttt{C-GLISp} supplied in \cite{zhu2021c} (version 3.0 of the same code package). For all the procedures, Problem \eqref{eq:Next_sample_search_no_black-box_constraints} has been solved using Particle Swarm Optimization (\texttt{PSWARM}). In particular, we have used its MATLAB implementation provided by \cite{vaz2007particle,vaz2009pswarm,le2012optimizing}.

To achieve a fair comparison, we have chosen the same hyper-parameters for both \GLISprmethod{} and \GLISpCGLISpmethods{}, whenever possible. This applies, for example, to the shape parameter $\epsilon$, which is initialized to $\epsilon = 1$, and the radial function $\varphi \left(\cdot\right)$, that is an inverse quadratic. Furthermore, the shape parameter for the surrogate model $\hat{f}_N\left(\boldsymbol{x}\right)$ in \eqref{eq:RBF_surrogate_model} is recalibrated using \LOOCV{} (see \GLISpmethod{}) at the iterations  $\mathcal{K}_{R} = \left\{1, 50, 100\right\}$. Its possible values are $\mathcal{E}_{\LOOCV{}} = \{0.1000,$ $0.1668,$ $0.2783,$ $0.4642,$ $0.7743,$ $1,$ $1.2915,$ $2.1544,$ $3.5938,$ $5.9948,$ $10\}$. The remaining hyper-parameters shared by \GLISpCGLISpmethods{} and \GLISprmethod{} are set to $\lambda = 10^{-6}$ and $\sigma = 10^{-2}$. Regarding \GLISprmethod{}, we have chosen $\Delta_{cycle} = \langle 0.95,0.7,0.35,0 \rangle$, where we have included a zero term to comply with the convergence result in Theorem \ref{theo:convergence_of_GLISp-r}. The reasoning behind this cycling sequence is that, after the initial sampling phase, we give priority to the surrogate as to drive the algorithm towards more promising regions of $\Omega$, for example where local minima are located. In practice, if $f\left(\boldsymbol{x}\right)$ is a function that can be approximated well by $\hat{f}_N\left(\boldsymbol{x}\right)$ with little data, starting with a $\delta$ in \eqref{eq:Acquisition_function_GLISp-r} that is close to $1$ might lead the procedure to converge quite faster. If that is not the case, the remaining terms contained inside $\Delta_{cycle}$ promote the exploration of other zones of the constraint set, either dependently or independently from $\hat{f}_N\left(\boldsymbol{x}\right)$. For the sake of completeness, we also analyze the performances of \GLISprmethod{} when equipped with two particular cycling sequences: $\Delta_{cycle} = \langle 0.95 \rangle$ (\quotes{pure} exploitation) and $\Delta_{cycle} = \langle 0 \rangle$ (pure exploration). Concerning \GLISpCGLISpmethods{}, which use $a_N\left(\boldsymbol{x}\right)$ in \eqref{eq:Acquisition_function_GLISp}, we set $\delta = 2$, as proposed by the authors in \cite{Bemporad2021}. Lastly, for \GLISprmethod{}, we select $K_{aug} = 5$ for all optimization problems since, empirically, after several preliminary experiments, it has proven to be good enough to rescale $z_N\left(\boldsymbol{x}\right)$ in \eqref{eq:Inverse_distance_weighting_distance_function} and $\hat{f}_N\left(\boldsymbol{x}\right)$ in \eqref{eq:RBF_surrogate_model} in most cases (see for example \figname{} \ref{fig:Comparison_between_acquisition_function_terms}).

We run the procedures using a fixed budget of $N_{max}=200$ samples and solve each benchmark optimization problem $N_{trial} = 100$ times, starting from $N_{init} = 4 \cdot n$  points generated by \LHD{}s \cite{mckay2000comparison} with different random seeds. To ensure fairness of comparison, all the algorithms are started from the same samples.

For the sake of clarity, we point out that the preferences between the couples of samples are expressed using the preference function in \eqref{eq:preference_function}, where $f\left(\boldsymbol{x}\right)$ is the corresponding cost function for the considered benchmark optimization problem. Therefore, the preferences are always expressed consistently, allowing us to compare \GLISprmethod{} and \GLISpCGLISpmethods{} properly. That would not necessarily be the case if a human decision-maker was involved.

\subsection{Results}
We compare the performances of \GLISprmethod{} and \GLISpCGLISpmethods{} on each benchmark optimization problem by means of \important{convergence plots} and \important{data profiles} \cite{audet2017derivative}. Convergence plots depict the median, best and worst case performances over the $N_{trial}$ instances and with respect to the cost function values achieved by $\boldsymbol{x_{best}}\left(N\right)$, as $N$ increases. Data profiles are one of the most popular tools for assessing efficiency and robustness of global optimization methods: efficient surrogate-based methods exhibit steep slopes (i.e. fast convergences speeds), while robust algorithms are able to solve more (instances of) problems within the budget $N_{max}$. In general, no method is both efficient and robust: a trade-off must be made \cite{thanedar1990robustness}. Typically, data profiles are used to visualize the performances of several algorithms on multiple benchmark optimization problems simultaneously. However, due to the stochastic nature of \LHD{}s \cite{mckay2000comparison}, here we will also depict the data profiles for \GLISprmethod{}, \GLISpmethod{} and \CGLISpmethod{} on each benchmark optimization problem on its own, highlighting how the algorithms behave when started from different samples. In practice, data profiles show, for $1 \leq N \leq N_{max}$, how many among the $N_{trial}$ instances of one (or several) benchmark optimization problem(s) have been solved to a prescribed \important{accuracy}, defined as \cite{audet2017derivative}:
\begin{equation}
    \label{eq:accuracy}
    acc\left(N\right) = \frac{f\left(\boldsymbol{x_{best}}\left(N\right)\right)-f\left(\boldsymbol{x}_{1}\right)}{f^*-f\left(\boldsymbol{x}_{1}\right)},
\end{equation}
where $f^* = \min{\boldsymbol{x} \in \Omega} f\left(\boldsymbol{x}\right)$. In particular, here we consider a benchmark optimization problem to be solved by some algorithm when $acc\left(N\right) > t, t = 0.95$. In what follows, the results achieved by \GLISprmethod{} equipped with $\Delta_{cycle} = \langle 0.95 \rangle$ and $\Delta_{cycle} = \langle 0 \rangle$ are reported only in the data profiles (and not in the convergence plots), to make the graphs more readable.

\figname{} \ref{fig:cumulative_data_profiles} shows the cumulative data profiles of \GLISprmethod{}, \GLISpmethod{} and \CGLISpmethod{}, which result from considering all the benchmark optimization problems simultaneously. Instead, \figname{} \ref{fig:performances_on_benchmarks_1} and \figname{} \ref{fig:performances_on_benchmarks_2} depict the convergence plots and the data profiles achieved by the considered algorithms on each benchmark. In \tablename{} \ref{tab:algorithm_comparison} we report the number of samples required to reach the accuracy $t = 0.95$, i.e.:
\begin{equation}
    \label{eq:number_of_samples_for_relative_accuracy}
    N_{acc > t} = \min{1 \leq N \leq N_{max}} N \quad \text{ such that } acc\left(N\right) > t.
\end{equation}
In practice, given that each benchmark optimization problem is solved multiple times, $N_{acc > t}$ in \tablename{} \ref{tab:algorithm_comparison} is assessed median-wise (over the $N_{trial}$ instances), giving us an indication on the efficiency of each method. In the same Table, we also report the percentages of instances of problems solved by \GLISprmethod{}, \GLISpmethod{} and \CGLISpmethod{} (which is an indicator of robustness) and the average execution times of each algorithm.

From our experiments, we gather that:
\begin{itemize}
    \item \important{\GLISprmethod{} ($\Delta_{cycle} = \langle 0.95, 0.7, 0.35, 0\rangle$) can be notably more robust than \GLISpmethod{} without excessively compromising its convergence speed}. On several occasions, the latter algorithm gets stuck on local minima of the benchmark optimization problems. That is particularly evident on the \bemporadGOP{} and \gramacyandleeGOP{} benchmarks, in which cases \GLISpmethod{} solves, respectively, only $70\%$ and $31\%$ of the $N_{trial}$ instances. Instead, \GLISprmethod{} ($\Delta_{cycle} = \langle 0.95, 0.7, 0.35, 0\rangle$) is able to solve both of them to the prescribed accuracy. 
          In practice, \GLISpmethod{} shines when exploitation is better suited for the benchmark optimization problem at hand. That is particularly relevant for the \bukinsixGOP{} problem, on which both \GLISpmethod{} and \GLISprmethod{} with $\Delta_{cycle} = \langle 0.95 \rangle$  (\quotes{pure} exploitation) perform quite well. Lastly, \CGLISpmethod{} is as robust as \GLISprmethod{} ($\Delta_{cycle} = \langle 0.95, 0.7, 0.35, 0\rangle$) but is the least efficient among the analyzed procedures.
    \item The pure exploration strategy (i.e. \GLISprmethod{} with $\Delta_{cycle} = \langle 0\rangle$) performs poorly, even for $n = 2$. When $n = 5$, it is unable solve any problem (in fact, the data profiles stay flat after the initial sampling phase). Only for $n = 1$ the pure exploration strategy is quite robust and relatively efficient.
    \item Vice-versa, a \quotes{pure} exploitatory approach (i.e. \GLISprmethod{} with $\Delta_{cycle} = \langle 0.95\rangle$), although not necessarily globally convergent (see Theorem \ref{theo:convergence_of_GLISp-r}), can actually be successful on some benchmark optimization problems. Often, such strategy exhibits a slightly lower $N_{acc > t}$ (median-wise) than the other procedures, see Table \ref{tab:algorithm_comparison}. Notably, the data profiles of \GLISprmethod{} ($\Delta_{cycle} = \langle 0.95\rangle$) can be quite similar to those of \GLISpmethod{}. Therefore, we could say that \GLISpmethod{} has limited exploratory capabilities. 
    \item The main disadvantage of \GLISprmethod{}, compared to \GLISpmethod{} and \CGLISpmethod{}, is the increased computational time, as reported in Table \ref{tab:algorithm_comparison}. That is due to the computational overhead of Algorithm \ref{alg:Augmented_sample_set}, which generates the augmented sample set $\mathcal{X}_{aug}$ for the proposed procedure by performing $K$-means clustering. On average, \GLISprmethod{} ($\Delta_{cycle} = \langle 0.95, 0.7, 0.35, 0\rangle$) is $31\%$ slower than \GLISpmethod{} and $72\%$ slower than \CGLISpmethod{}. However, we point out that these overheads are practically negligible when the queries to the decision-maker involve running computer simulations or performing real-world experiments which, contrary to the considered benchmark optimization problems, can take from a few minutes up to several hours. This is a common  assumption made by surrogate-based methods: at each iteration, the most time-consuming operation is the query to the decision-maker (or, in the context of black-box optimization, the measure of the cost function \cite{vu2017surrogate,jones2001taxonomy}).
\end{itemize}

\begin{figure}[!htb]
    \centering
    \includegraphics[width=0.8\textwidth]{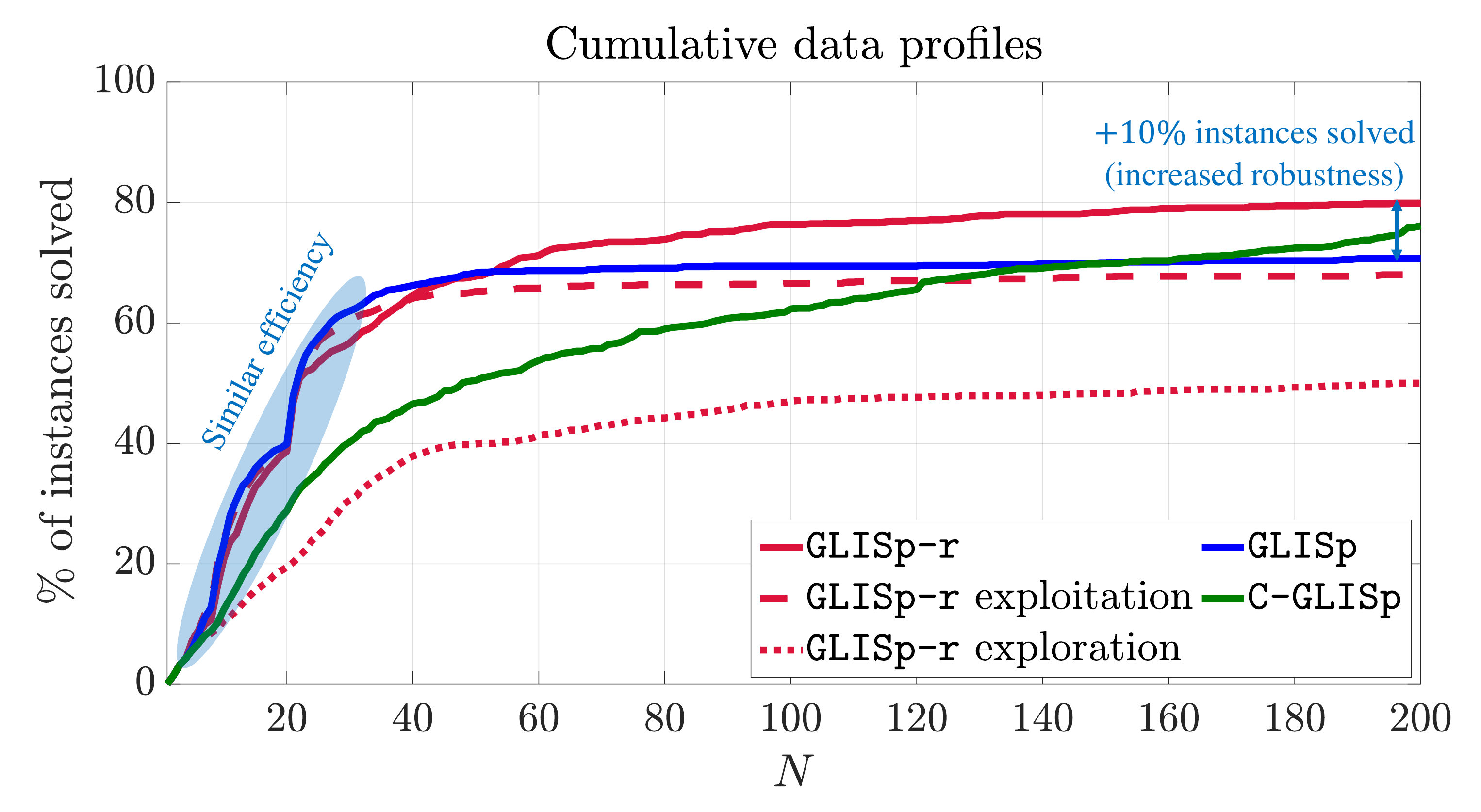}

    \caption{
        \label{fig:cumulative_data_profiles}
        Cumulative data profiles ($acc\left(N\right) > 95\%$) 
        of the considered preference-based optimization algorithms. \GLISprmethod{} is depicted in red ($\Delta_{cycle} = \langle 0.95, 0.7, 0.35, 0\rangle$ continuous line, $\Delta_{cycle} = \langle 0.95 \rangle$ dashed line, $\Delta_{cycle} = \langle 0 \rangle$ dotted line), \GLISpmethod{} in blue and \CGLISpmethod{} in green.
    }
\end{figure}

\section{Conclusions}
\label{sec:Discussion}
In this work, we introduced the preference-based optimization problem from a utility theory perspective. Then, we extended algorithm \GLISpmethod{}, giving rise \GLISprmethod{}. In particular, we have addressed the shortcomings of $a_N\left(\boldsymbol{x}\right)$ in \eqref{eq:Acquisition_function_GLISp}, defined a new acquisition function and proposed the greedy $\delta$-cycling strategy, which dynamically varies the exploration-exploitation weight $\delta$ in \eqref{eq:Acquisition_function_GLISp-r}. 
Furthermore, we have proven the global convergence of \GLISprmethod{}, which is strictly related to the choice of the cycling sequence $\Delta_{cycle}$. To the best of our knowledge, \GLISprmethod{} is the first preference-based surrogate-based method with a formal proof of convergence.

Compared to the original method, the proposed extension is less likely to get stuck on local minima of the scoring function and proves to be more robust on several benchmark optimization problems without particularly compromising its convergence speed. Moreover, we have also considered algorithm \CGLISpmethod{}, which 
improves the exploratory capabilities of \GLISpmethod{} by employing a different exploration function. In our experiments, we have observed that, even though \CGLISpmethod{} is as robust as \GLISprmethod{}, it often exhibits slower convergence rates compared to \GLISpmethod{} and the proposed extension.

Further research is devoted to extending \GLISprmethod{} in order to handle black-box constraints, which involve functions whose analytical formulations are not available. One possibility is to follow the same reasoning behind \CGLISpmethod{}, which does so by adding a term to the acquisition function that penalizes the exploration in those regions of $\Omega$ where the black-box constraints are likely to be violated. Additionally, a compelling line of research concerns preference-based optimization problems in the case where the human decision-maker is \quotes{irrational} (in a sense that some of the properties in Definition \ref{def:rational_decision_maker}, typically completeness, do not hold). From a practical perspective, when the preference relation $\succsim$ on $\Omega$ of the \DM{} is not complete, we would need a surrogate model that is able to handle the answer \quotes{I do not know} when the decision-maker cannot decide which, among two calibrations, he/she prefers.

\paragraph*{Data availability} The benchmark optimization problems considered in Section \ref{sec:Empirical_results} are reported in \cite{Bemporad2020,gramacy2012cases,jamil2013literature,mishra2006some}. The MATLAB code for algorithms \GLISpmethod{} and \CGLISpmethod{} is provided in the corresponding papers. The MATLAB code for the proposed method, \GLISprmethod{}, is supplied in the supplementary material provided in \cite{previtali2023glisp}.
\subsubsection*{Statements and declarations}
\paragraph*{Conflict of interest} All the authors declare that they have no conflict of interest.

\bibliographystyle{plain}
\bibliography{references_books.bib, references_thesis.bib, references_GO.bib, references_BBO.bib, references_PBO.bib, references_nonspecific.bib, references_our_contributions.bib, references_case_study.bib, reference_real_paper.bib}.

\begin{thebibliography}{10}

\bibitem{audet2017derivative}
Charles Audet and Warren Hare.
\newblock {\em Derivative-free and blackbox optimization}.
\newblock Springer, 2017.

\bibitem{Bemporad2020}
Alberto Bemporad.
\newblock Global optimization via inverse distance weighting and radial basis
  functions.
\newblock {\em Computational Optimization and Applications}, 77(2):571--595,
  Nov 2020.

\bibitem{Bemporad2021}
Alberto Bemporad and Dario Piga.
\newblock Global optimization based on active preference learning with radial
  basis functions.
\newblock {\em Machine Learning}, 110(2):417--448, Feb 2021.

\bibitem{benavoli2021preferential}
Alessio Benavoli, Dario Azzimonti, and Dario Piga.
\newblock Preferential bayesian optimisation with skew gaussian processes.
\newblock In {\em Proceedings of the Genetic and Evolutionary Computation
  Conference Companion}, pages 1842--1850, 2021.

\bibitem{bishop2006pattern}
Christopher~M. Bishop and Nasser~M. Nasrabadi.
\newblock {\em Pattern recognition and machine learning}.
\newblock Springer, 2006.

\bibitem{brochu2007active}
Eric Brochu, Nando De~Freitas, and Abhijeet Ghosh.
\newblock Active preference learning with discrete choice data.
\newblock {\em Advances in Neural Information Processing Systems 20 (NIPS
  2007)}, pages 409--416, 2007.

\bibitem{chu2005preference}
Wei Chu and Zoubin Ghahramani.
\newblock Preference learning with gaussian processes.
\newblock {\em Proceedings of the 22nd international conference on Machine
  learning}, pages 137--144, 2005.

\bibitem{debreu1971theory}
Gerard Debreu.
\newblock {\em Theory of value: An axiomatic analysis of economic equilibrium},
  volume~17.
\newblock Yale University Press, 1971.

\bibitem{fasshauer2007meshfree}
Gregory~E. Fasshauer.
\newblock {\em Meshfree approximation methods with MATLAB}, volume~6.
\newblock World Scientific, 2007.

\bibitem{feldman2006welfare}
Allan~M. Feldman and Roberto Serrano.
\newblock {\em Welfare economics and social choice theory}.
\newblock Springer Science \& Business Media, 2006.

\bibitem{fornberg2015primer}
Bengt Fornberg and Natasha Flyer.
\newblock {\em A primer on radial basis functions with applications to the
  geosciences}.
\newblock SIAM, 2015.

\bibitem{furnkranz2010preference}
Johannes F{\"u}rnkranz and Eyke H{\"u}llermeier.
\newblock {\em Preference learning and ranking by pairwise comparison}.
\newblock Springer, 2010.

\bibitem{gonzalez2017preferential}
Javier Gonz{\'a}lez, Zhenwen Dai, Andreas Damianou, and Neil~D. Lawrence.
\newblock Preferential bayesian optimization.
\newblock In {\em International Conference on Machine Learning}, pages
  1282--1291, 2017.

\bibitem{gramacy2012cases}
Robert~B. Gramacy and Herbert K.~H. Lee.
\newblock Cases for the nugget in modeling computer experiments.
\newblock {\em Statistics and Computing}, 22(3):713--722, 2012.

\bibitem{gutmann2001radial}
H.-M. Gutmann.
\newblock A radial basis function method for global optimization.
\newblock {\em Journal of global optimization}, 19(3):201--227, 2001.

\bibitem{han2011data}
Jiawei Han, Jian Pei, and Micheline Kamber.
\newblock {\em Data mining: concepts and techniques}.
\newblock Elsevier, 2011.

\bibitem{hastie2009elements}
Trevor Hastie, Robert Tibshirani, , and Jerome~H. Friedman.
\newblock {\em The elements of statistical learning: data mining, inference,
  and prediction}, volume~2.
\newblock Springer, 2009.

\bibitem{jamil2013literature}
Momin Jamil and Xin-She Yang.
\newblock A literature survey of benchmark functions for global optimisation
  problems.
\newblock {\em International Journal of Mathematical Modelling and Numerical
  Optimisation}, 4(2):150--194, 2013.

\bibitem{jones2001taxonomy}
Donald~R. Jones.
\newblock A taxonomy of global optimization methods based on response surfaces.
\newblock {\em Journal of global optimization}, 21(4):345--383, 2001.

\bibitem{le2012optimizing}
Hoai~A. Le~Thi, A.~Ismael~F. Vaz, and Luis~Nunes Vicente.
\newblock Optimizing radial basis functions by dc programming and its use in
  direct search for global derivative-free optimization.
\newblock {\em Top}, 20(1):190--214, 2012.

\bibitem{liu2002multiobjective}
G.~P. Liu, J.~B. Yang, and J.~F. Whidborn.
\newblock {\em Multiobjective optimisation and control}.
\newblock Research Studies Press Ltd, 2002.

\bibitem{lloyd1982least}
Stuart Lloyd.
\newblock Least squares quantization in pcm.
\newblock {\em IEEE transactions on information theory}, 28(2):129--137, 1982.

\bibitem{marti2018multi}
Rafael Mart{\'\i}, Jose~A. Lozano, Alexander Mendiburu, and Leticia Hernando.
\newblock Multi-start methods.
\newblock {\em Handbook of heuristics}, pages 155--175, 2018.

\bibitem{mckay2000comparison}
Michael~D. McKay, Richard~J. Beckman, and William~J. Conover.
\newblock A comparison of three methods for selecting values of input variables
  in the analysis of output from a computer code.
\newblock {\em Technometrics}, 42(1):55--61, 2000.

\bibitem{mishra2006some}
Sudhanshu~K. Mishra.
\newblock Some new test functions for global optimization and performance of
  repulsive particle swarm method.
\newblock {\em Available at SSRN 926132}, 2006.

\bibitem{nocedal1999numerical}
Jorge Nocedal and Stephen~J. Wright.
\newblock {\em Numerical optimization}.
\newblock Springer, 1999.

\bibitem{ok2011real}
Efe~A. Ok.
\newblock {\em Real analysis with economic applications}.
\newblock Princeton University Press, 2011.

\bibitem{previtali2023glisp}
Davide Previtali, Mirko Mazzoleni, Antonio Ferramosca, and Fabio Previdi.
\newblock Glisp-r: a preference-based optimization algorithm with convergence
  guarantees.
\newblock {\em Computational Optimization and Applications}, pages 1--38, 2023.

\bibitem{regis2005constrained}
Rommel~G. Regis and Christine~A. Shoemaker.
\newblock Constrained global optimization of expensive black box functions
  using radial basis functions.
\newblock {\em Journal of Global optimization}, 31(1):153--171, 2005.

\bibitem{regis2007stochastic}
Rommel~G. Regis and Christine~A. Shoemaker.
\newblock A stochastic radial basis function method for the global optimization
  of expensive functions.
\newblock {\em INFORMS Journal on Computing}, 19(4):497--509, 2007.

\bibitem{roveda2021pairwise}
Loris Roveda, Beatrice Maggioni, Elia Marescotti, Asad~Ali Shahid, Andrea~Maria
  Zanchettin, Alberto Bemporad, and Dario Piga.
\newblock Pairwise preferences-based optimization of a path-based velocity
  planner in robotic sealing tasks.
\newblock {\em IEEE Robotics and Automation Letters}, 6(4):6632--6639, 2021.

\bibitem{shepard1968two}
Donald Shepard.
\newblock A two-dimensional interpolation function for irregularly-spaced data.
\newblock In {\em Proceedings of the 1968 23rd ACM national conference}, pages
  517--524, 1968.

\bibitem{thanedar1990robustness}
P.~B. Thanedar, J.~S. Arora, G.~Y. Li, and T.~C. Lin.
\newblock Robustness, generality and efficiency of optimization algorithms for
  practical applications.
\newblock {\em Structural optimization}, 2(4):203--212, 1990.

\bibitem{torn1989global}
Aimo Torn and Antanas Zilinskas.
\newblock {\em Global Optimization}.
\newblock Lecture Notes in Computer Science, 1989.

\bibitem{vaz2007particle}
A.~Ismael~F. Vaz and Luis~Nunes Vicente.
\newblock A particle swarm pattern search method for bound constrained global
  optimization.
\newblock {\em Journal of Global Optimization}, 39(2):197--219, 2007.

\bibitem{vaz2009pswarm}
A.~Ismael~F. Vaz and Luis~Nunes Vicente.
\newblock Pswarm: a hybrid solver for linearly constrained global
  derivative-free optimization.
\newblock {\em Optimization Methods \& Software}, 24(4-5):669--685, 2009.

\bibitem{vu2017surrogate}
Ky~Khac Vu, Claudia d'Ambrosio, Youssef Hamadi, and Leo Liberti.
\newblock Surrogate-based methods for black-box optimization.
\newblock {\em International Transactions in Operational Research},
  24(3):393--424, 2017.

\bibitem{wang2014general}
Yilun Wang and Christine~A. Shoemaker.
\newblock A general stochastic algorithmic framework for minimizing expensive
  black box objective functions based on surrogate models and sensitivity
  analysis.
\newblock {\em arXiv preprint arXiv:1410.6271}, 2014.

\bibitem{rasmussen2006gaussian}
Christopher~K. Williams and Carl~Edward Rasmussen.
\newblock {\em Gaussian processes for machine learning}.
\newblock MIT press Cambridge, MA, 2005.

\bibitem{zhu2020preference}
Mengjia Zhu, Alberto Bemporad, and Dario Piga.
\newblock Preference-based mpc calibration.
\newblock {\em arXiv preprint arXiv:2003.11294}, 2020.

\bibitem{zhu2021c}
Mengjia Zhu, Dario Piga, and Alberto Bemporad.
\newblock C-glisp: Preference-based global optimization under unknown
  constraints with applications to controller calibration.
\newblock {\em IEEE Transactions on Control Systems Technology}, 2021.

\end{thebibliography}

\begin{figure}[!htb]
    \centering
    \subfloat{
        \centering
        \includegraphics[width=.5\textwidth]{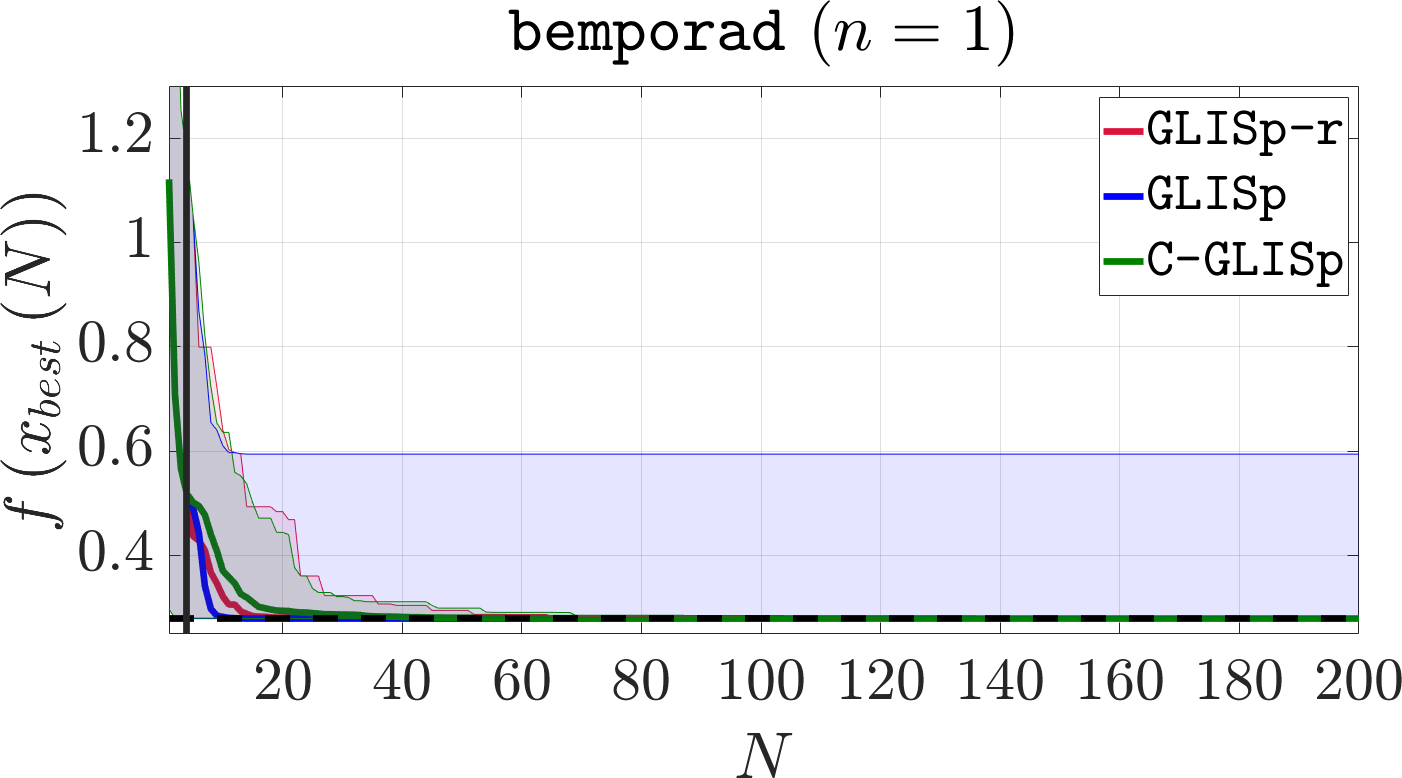}
    }
    \subfloat{
        \centering
        \includegraphics[width=.5\textwidth]{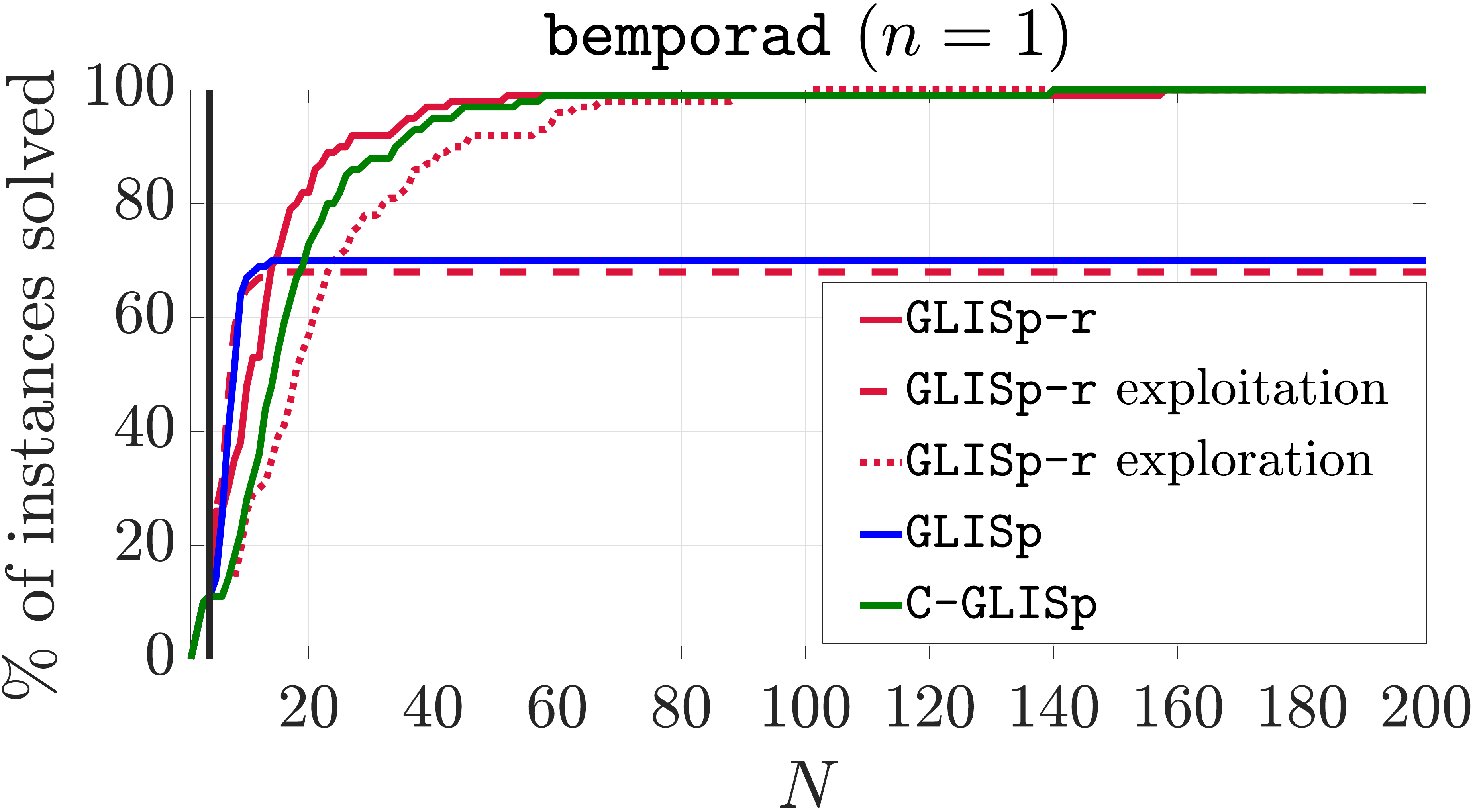}
    }

    \subfloat{
        \centering
        \includegraphics[width=.5\textwidth]{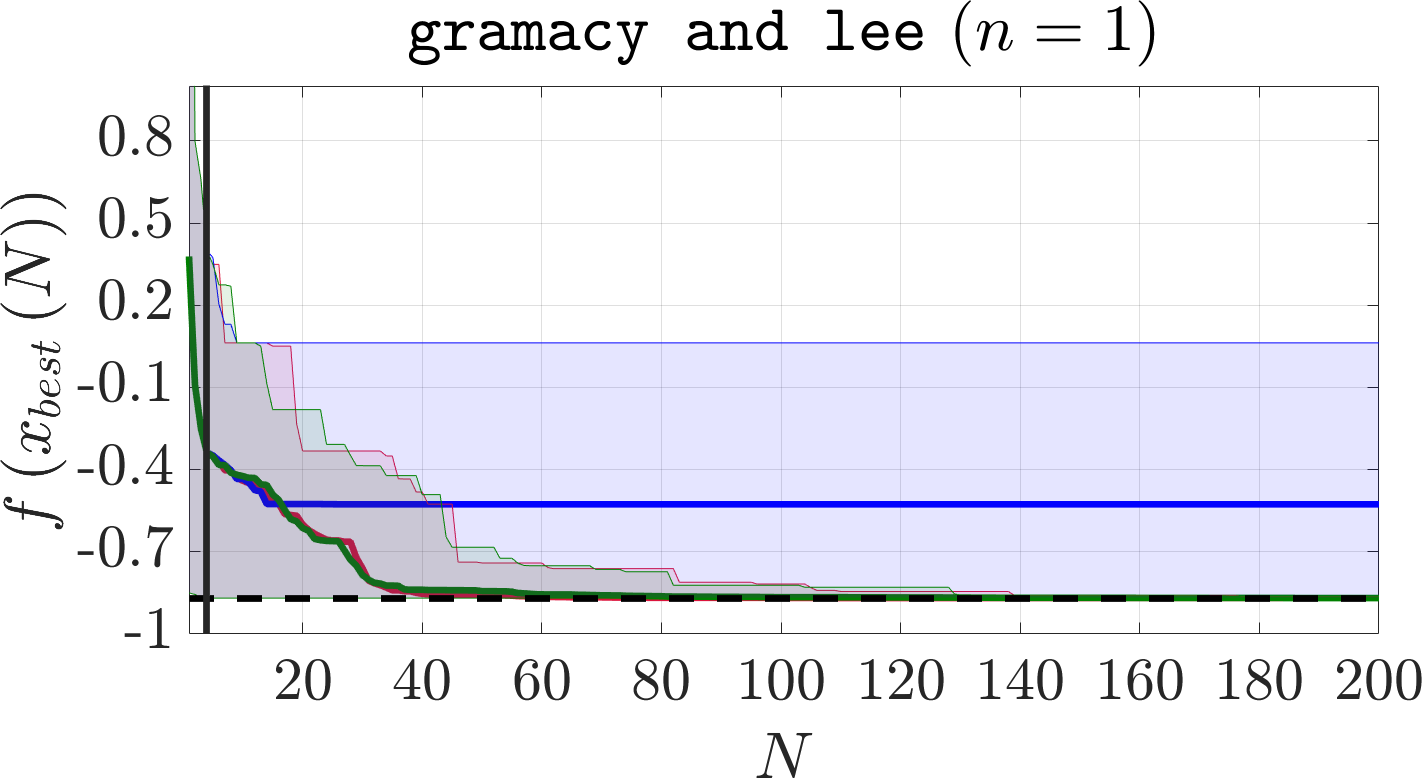}
    }
    \subfloat{
        \centering
        \includegraphics[width=.5\textwidth]{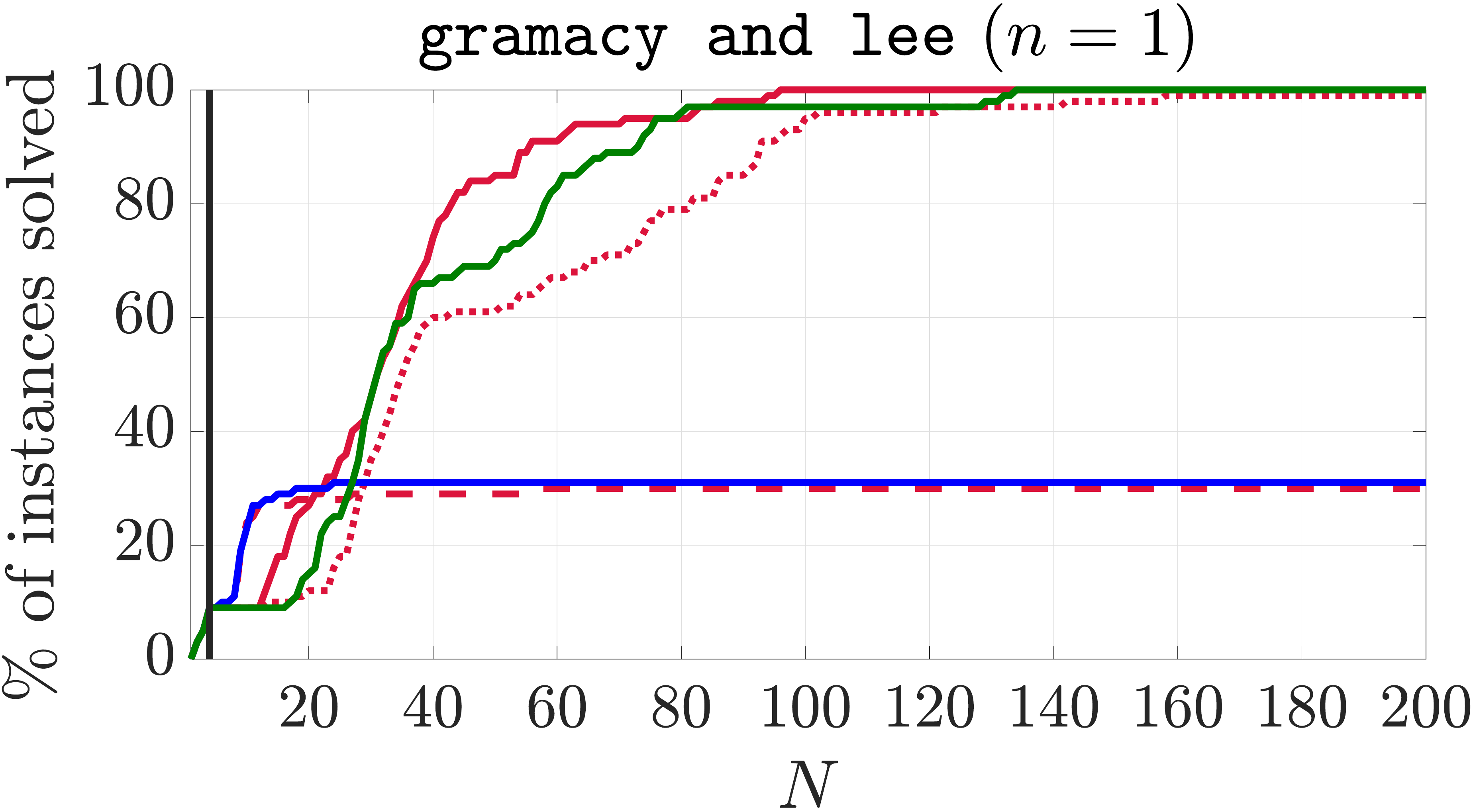}
    }

    \subfloat{
        \centering
        \includegraphics[width=.5\textwidth]{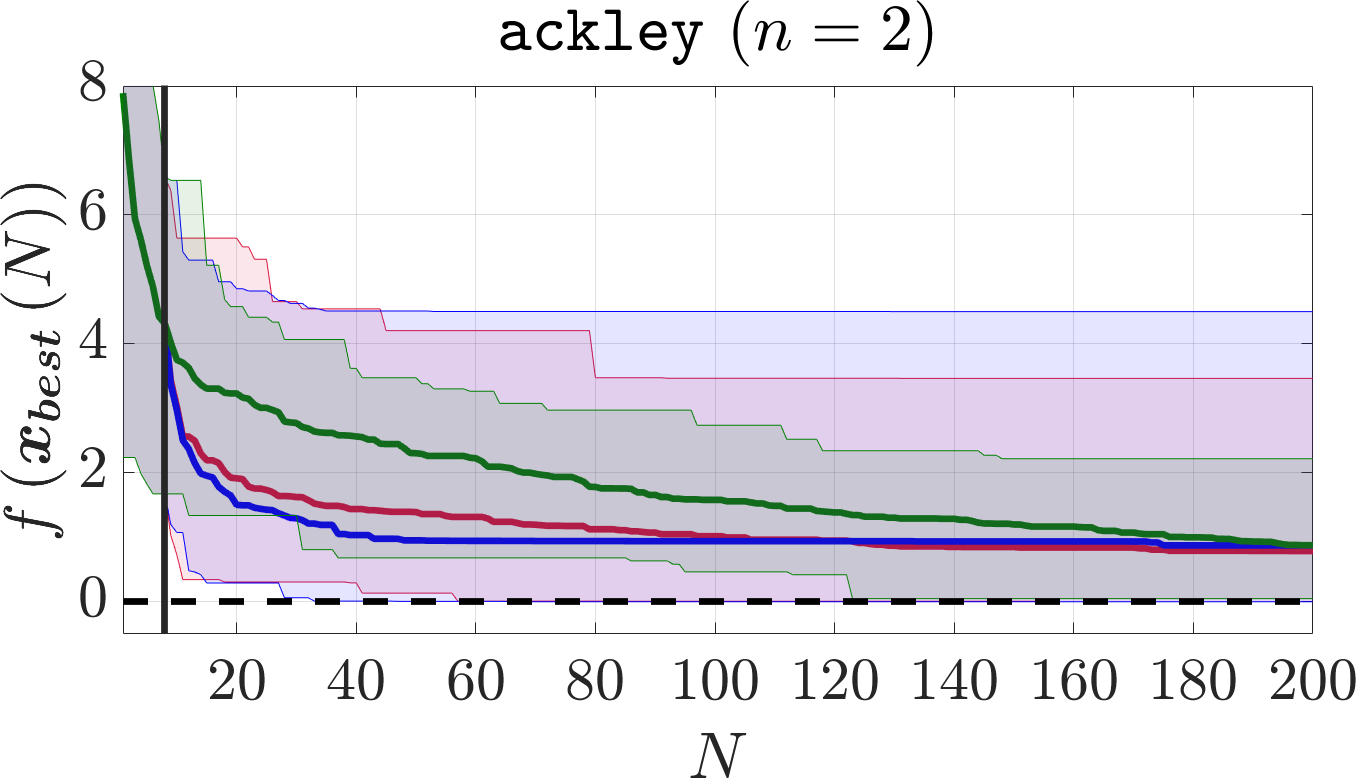}
    }
    \subfloat{
        \centering
        \includegraphics[width=.5\textwidth]{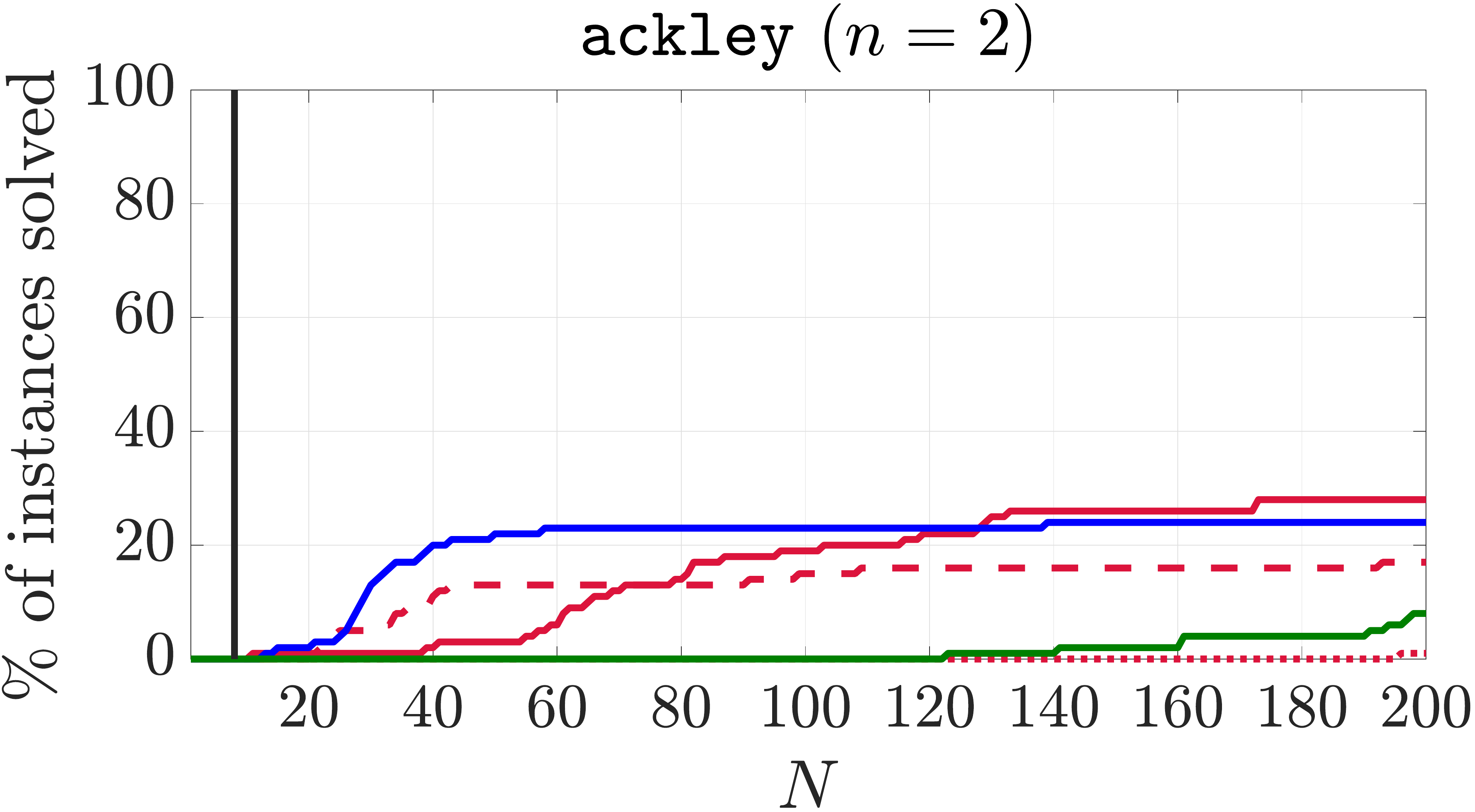}
    }

    \subfloat{
        \centering
        \includegraphics[width=.5\textwidth]{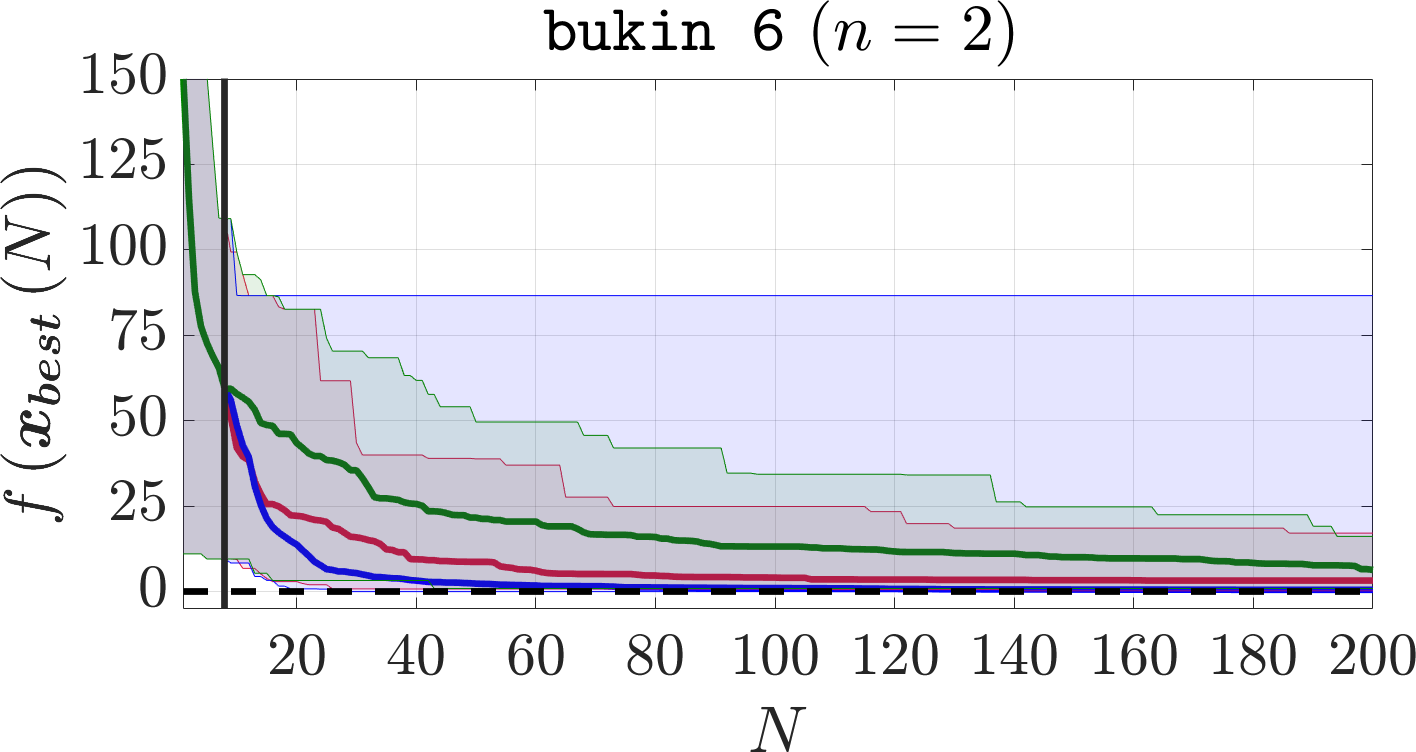}
    }
    \subfloat{
        \centering
        \includegraphics[width=.5\textwidth]{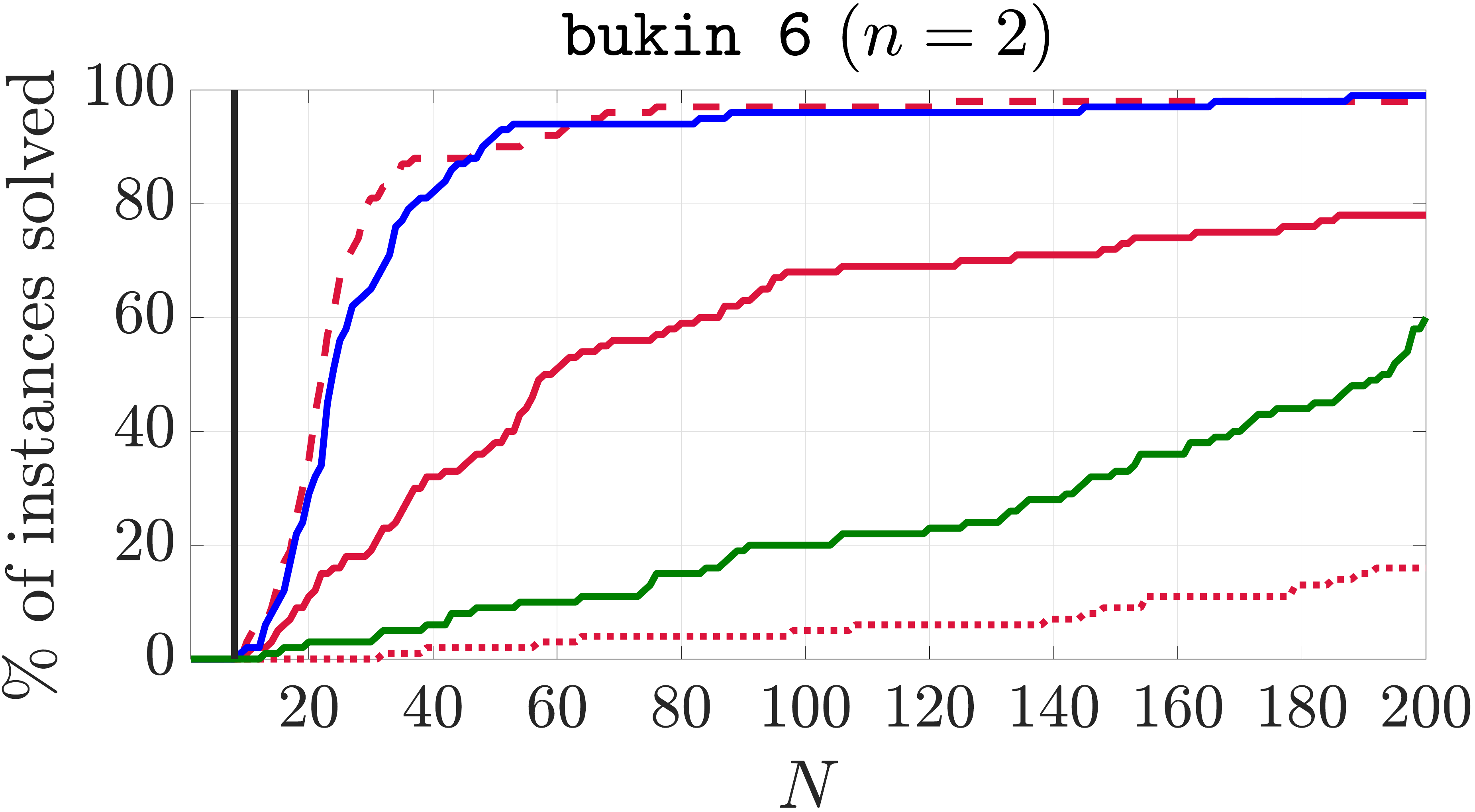}
    }

    \caption{
        \label{fig:performances_on_benchmarks_1}
        Performances achieved by the considered preference-based optimization algorithms on the benchmark optimization problems: convergence plots on the left and data profiles ($acc\left(N\right) > 95\%$) on the right. \GLISprmethod{} (with $\Delta_{cycle} = \langle 0.95, 0.7, 0.35, 0\rangle$) is depicted in red, \GLISpmethod{} in blue and \CGLISpmethod{} in green. The dashed black-line in the convergence plots represents $f^*$. We also show the number of initial samples, $N_{init}$, with a black vertical line. The results obtained by \GLISprmethod{} with $\Delta_{cycle} = \langle 0.95\rangle$ (dashed red line) and \GLISprmethod{} with $\Delta_{cycle} = \langle 0\rangle$ (dotted red line) are shown only in the data profiles.
    }
\end{figure}

\begin{figure}[!htb]
    \centering
    \subfloat{
        \centering
        \includegraphics[width=.5\textwidth]{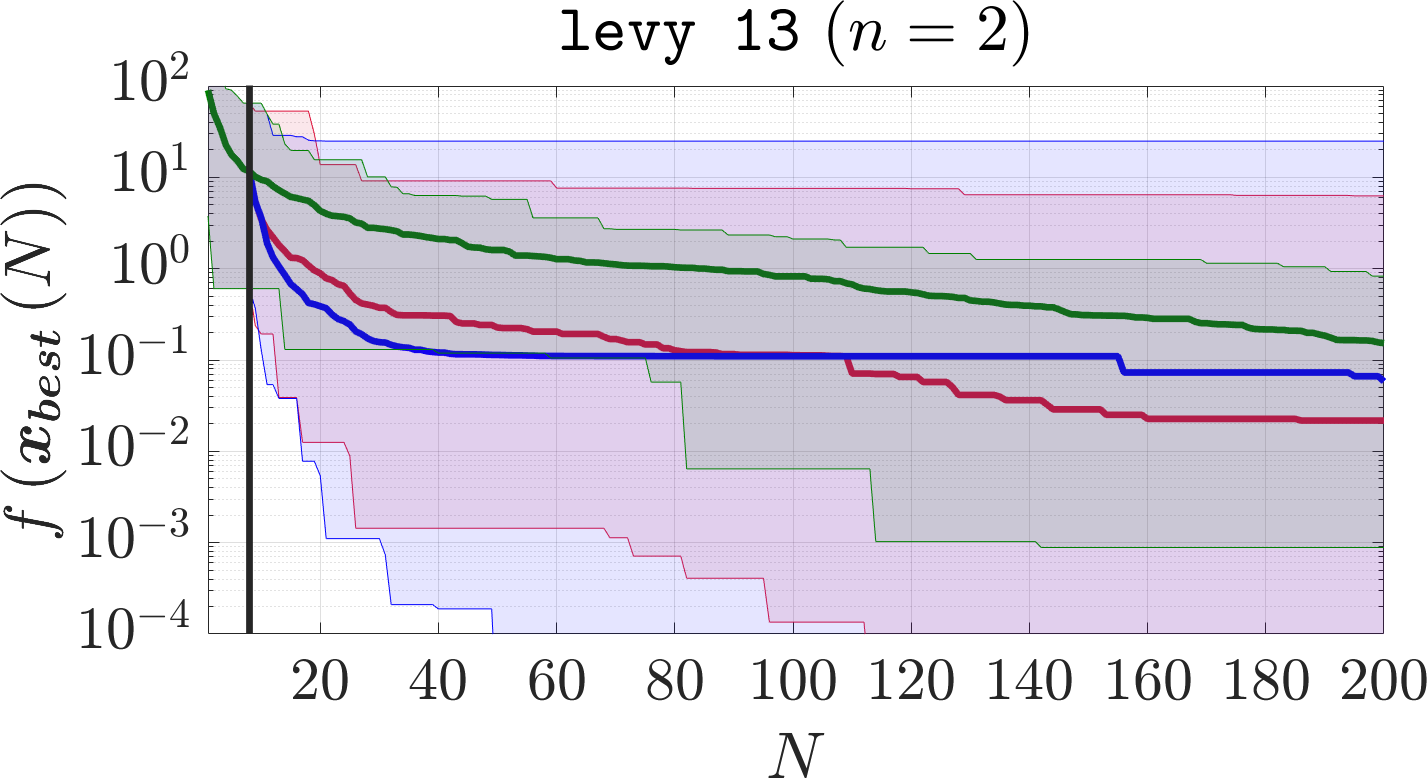}
    }
    \subfloat{
        \centering
        \includegraphics[width=.5\textwidth]{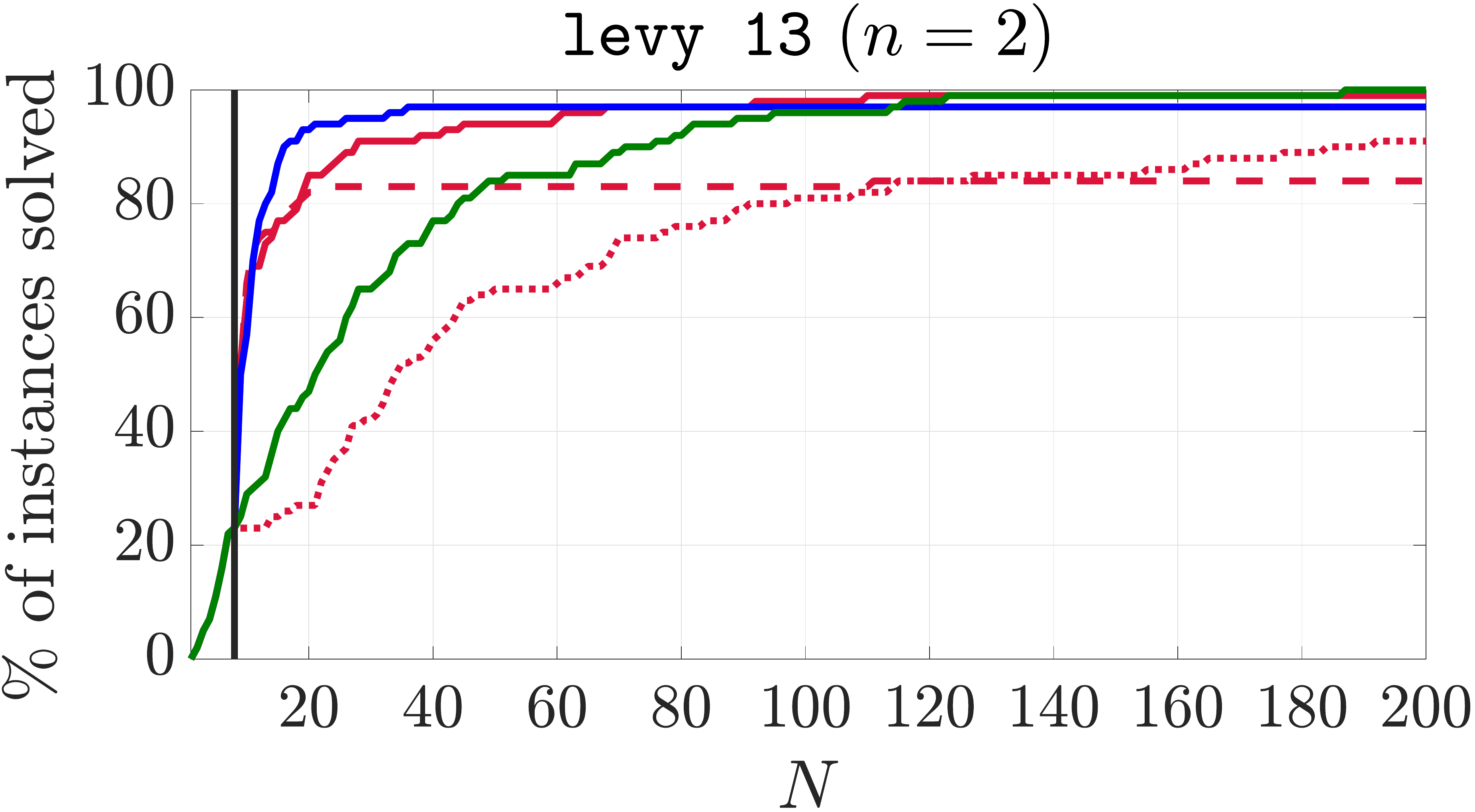}
    }

    \subfloat{
        \centering
        \includegraphics[width=.5\textwidth]{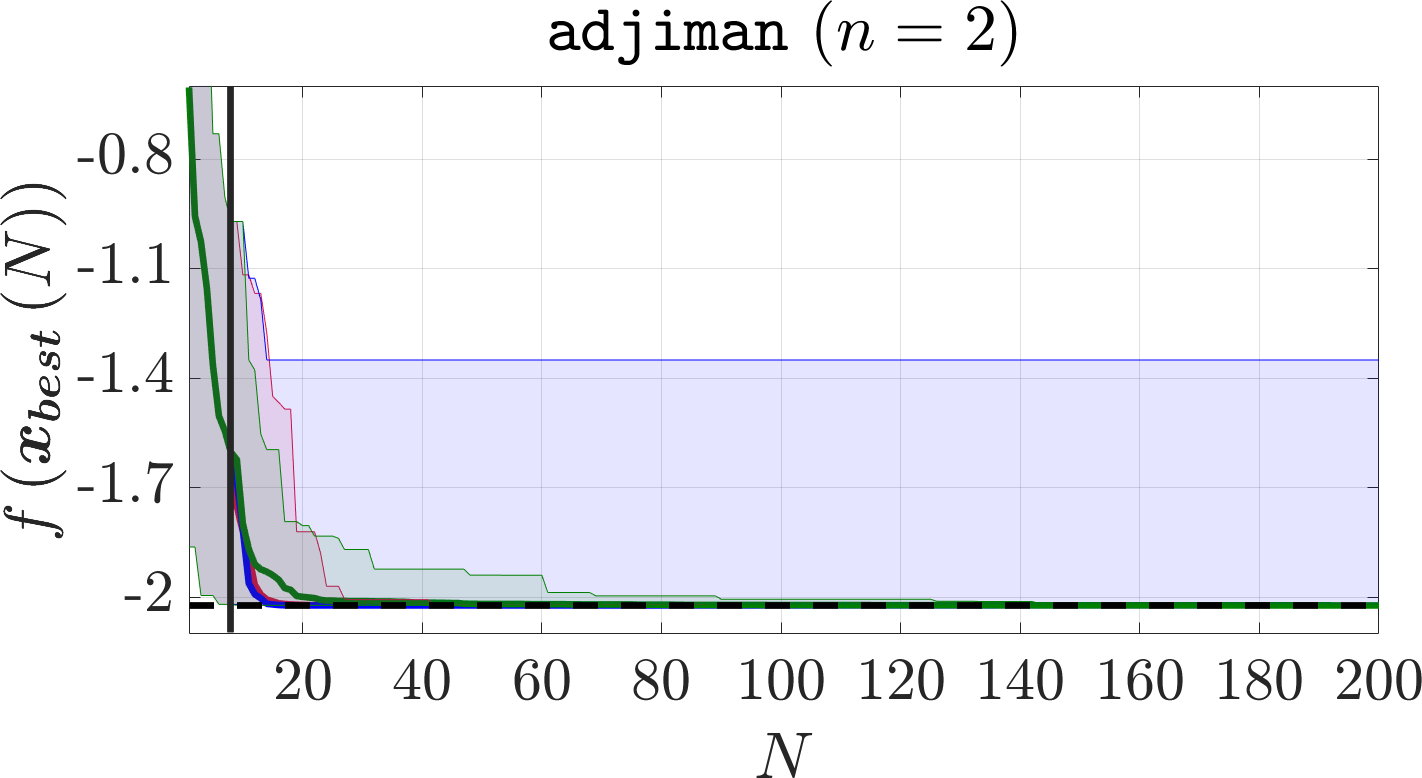}
    }
    \subfloat{
        \centering
        \includegraphics[width=.5\textwidth]{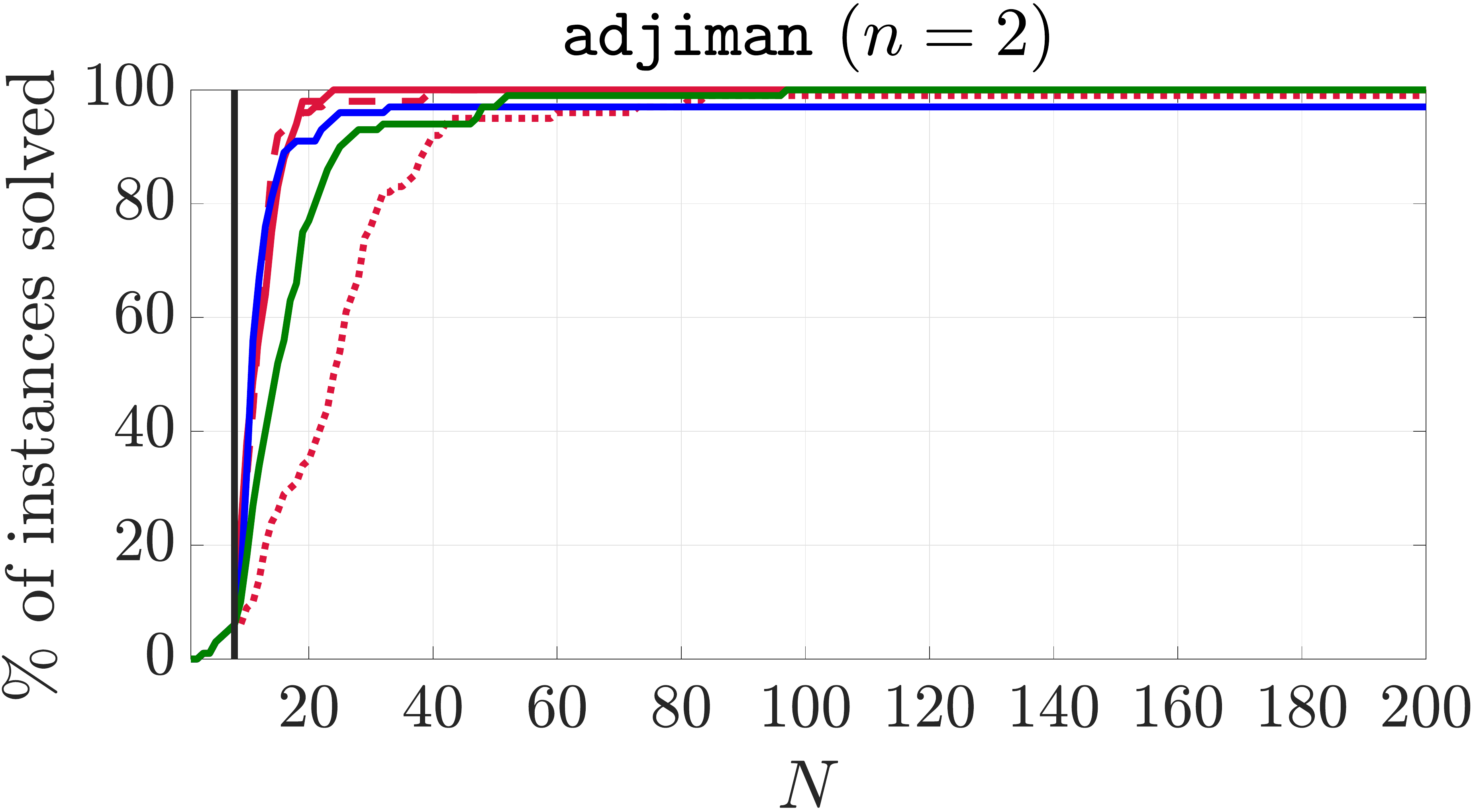}
    }

    \subfloat{
        \centering
        \includegraphics[width=.5\textwidth]{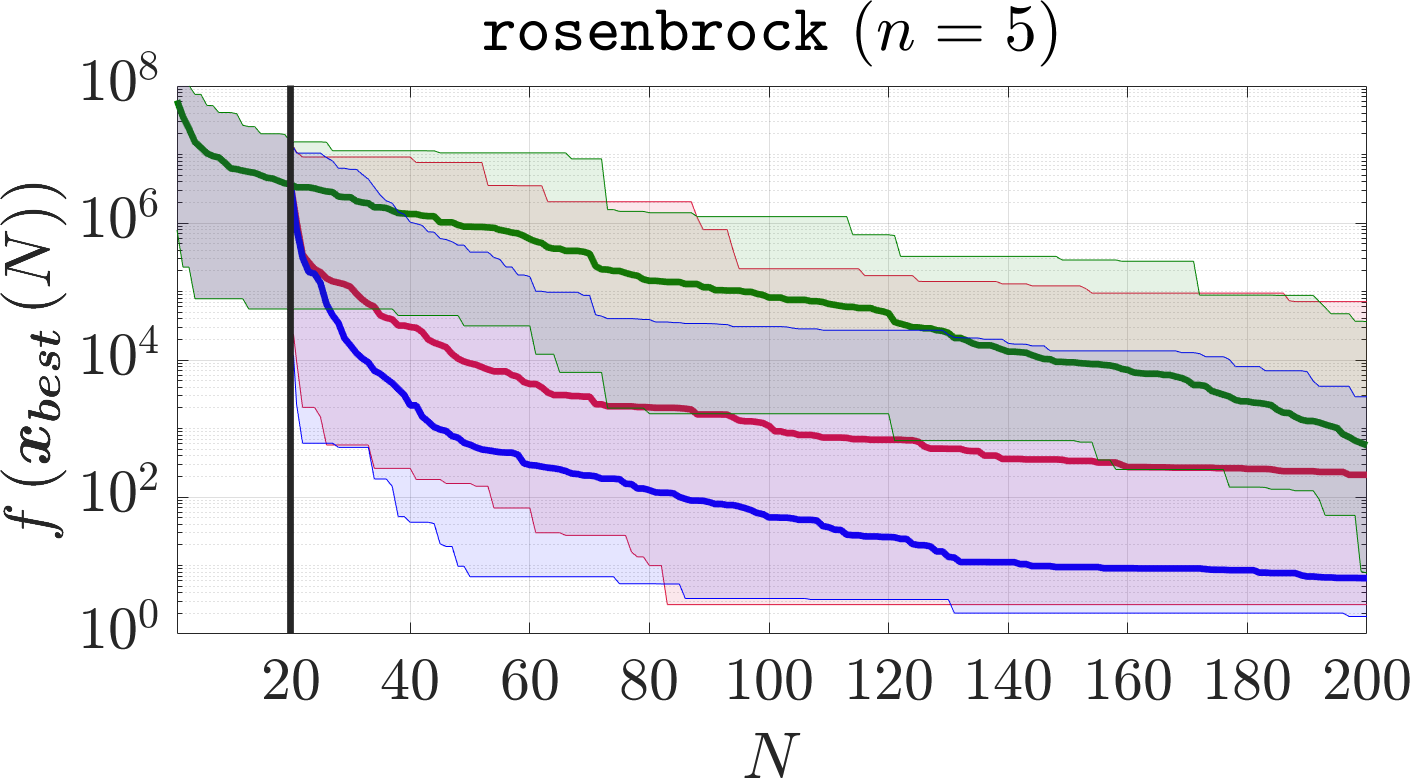}
    }
    \subfloat{
        \centering
        \includegraphics[width=.5\textwidth]{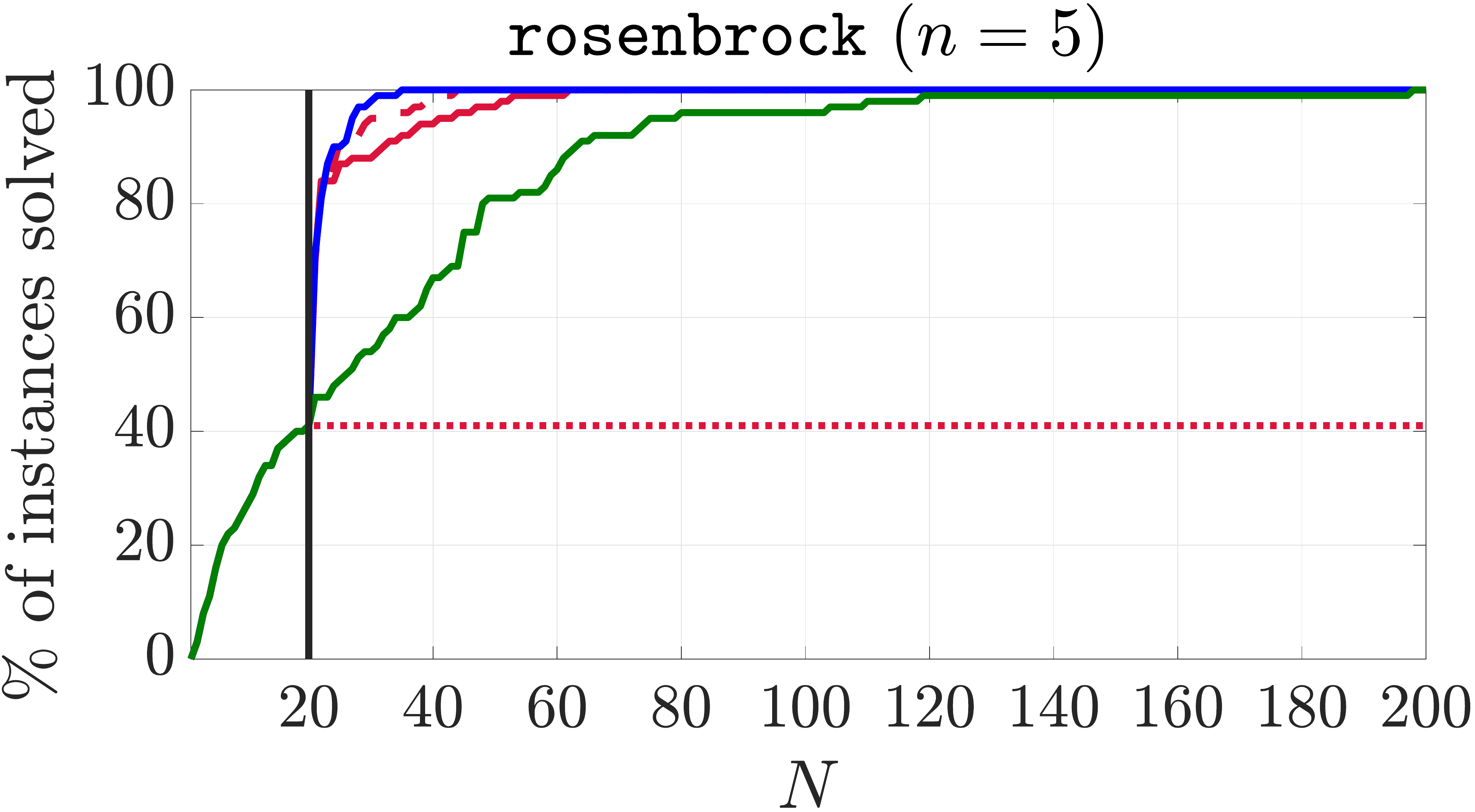}
    }

    \subfloat{
        \centering
        \includegraphics[width=.5\textwidth]{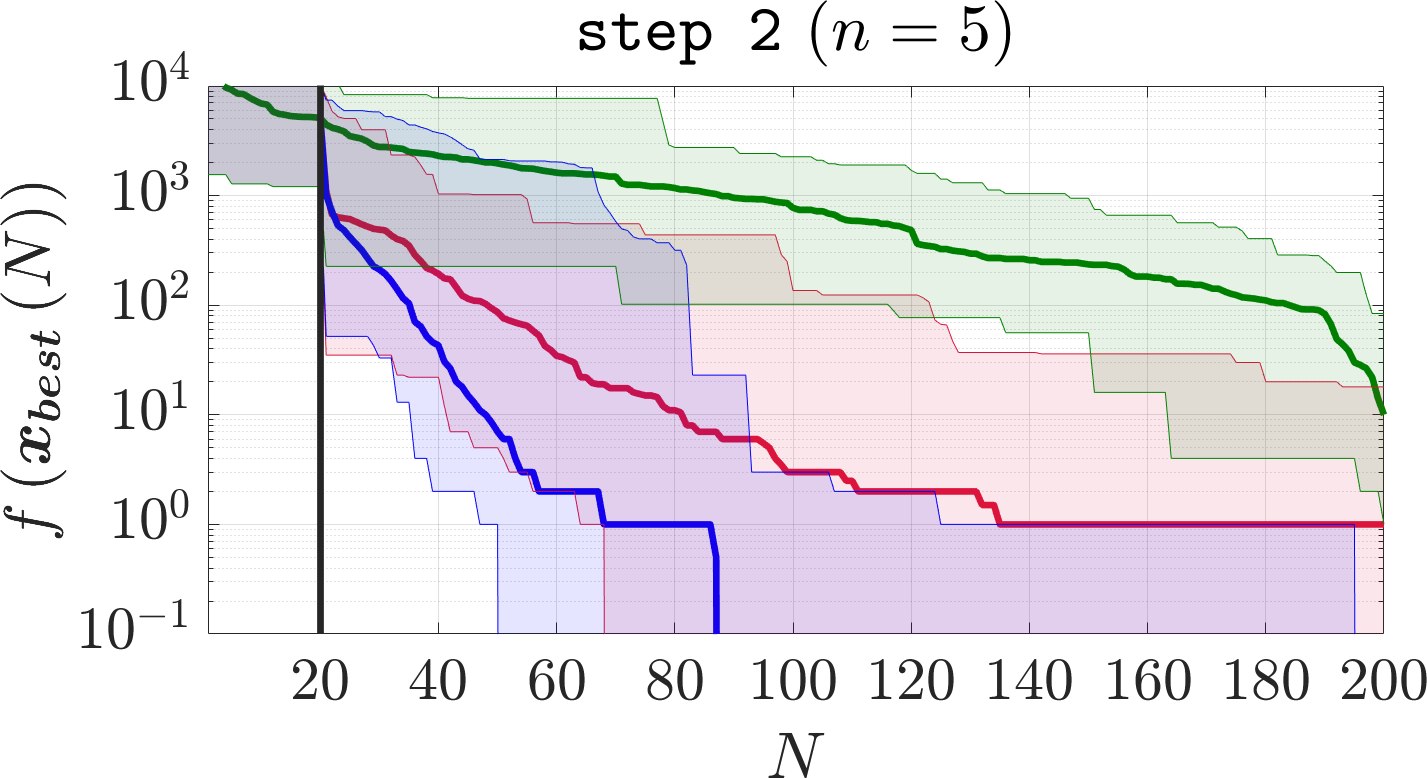}
    }
    \subfloat{
        \centering
        \includegraphics[width=.5\textwidth]{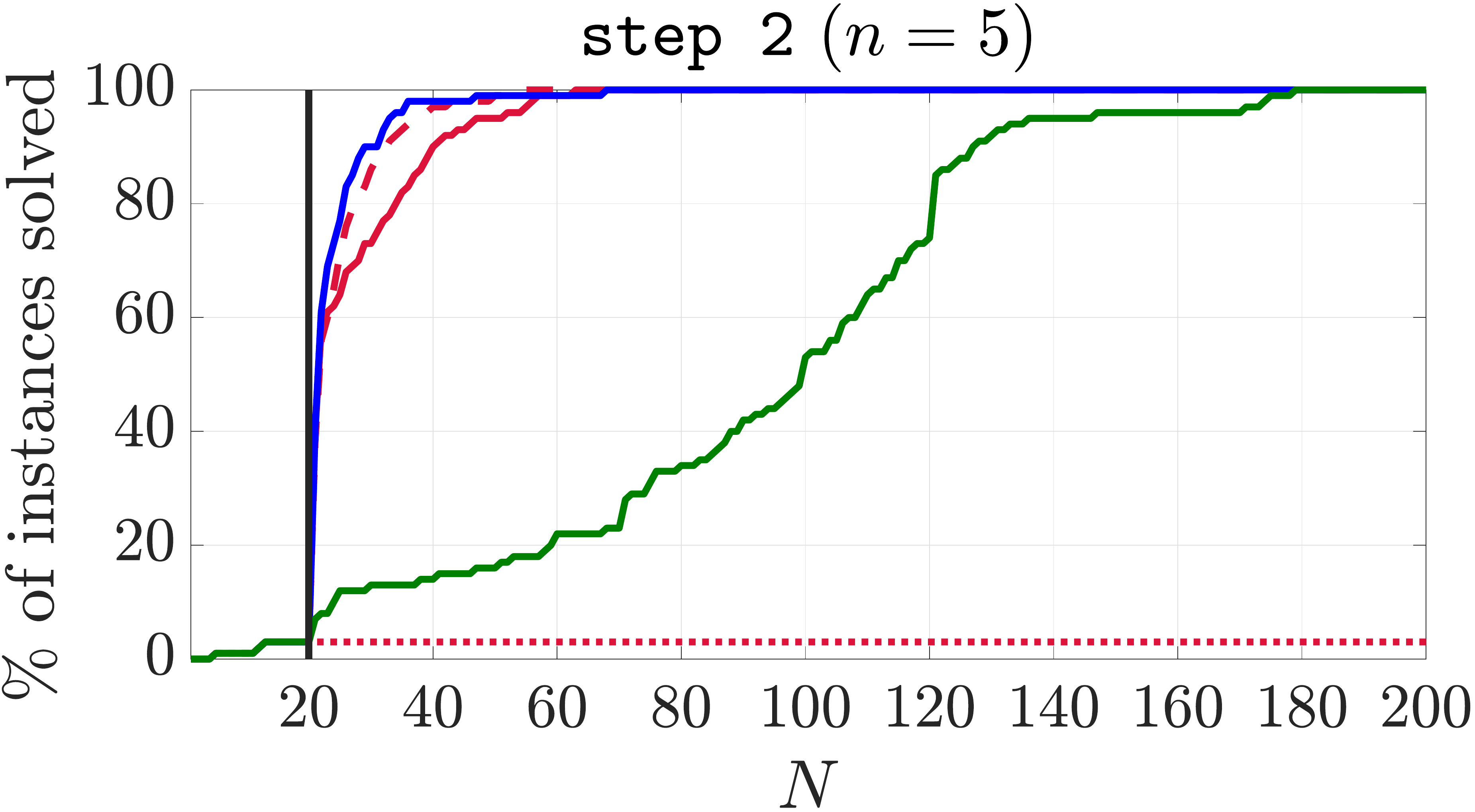}
    }

    \subfloat{
        \centering
        \includegraphics[width=.5\textwidth]{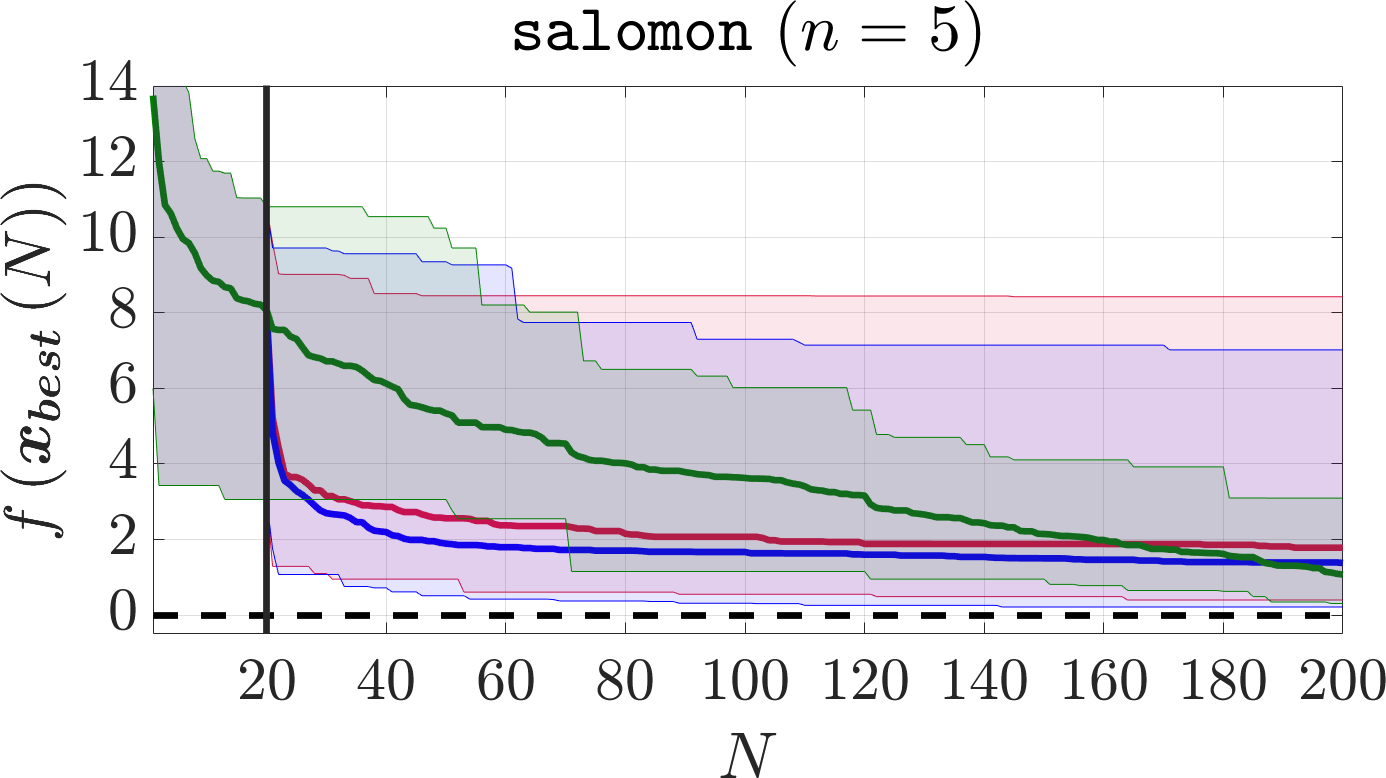}
    }
    \subfloat{
        \centering
        \includegraphics[width=.5\textwidth]{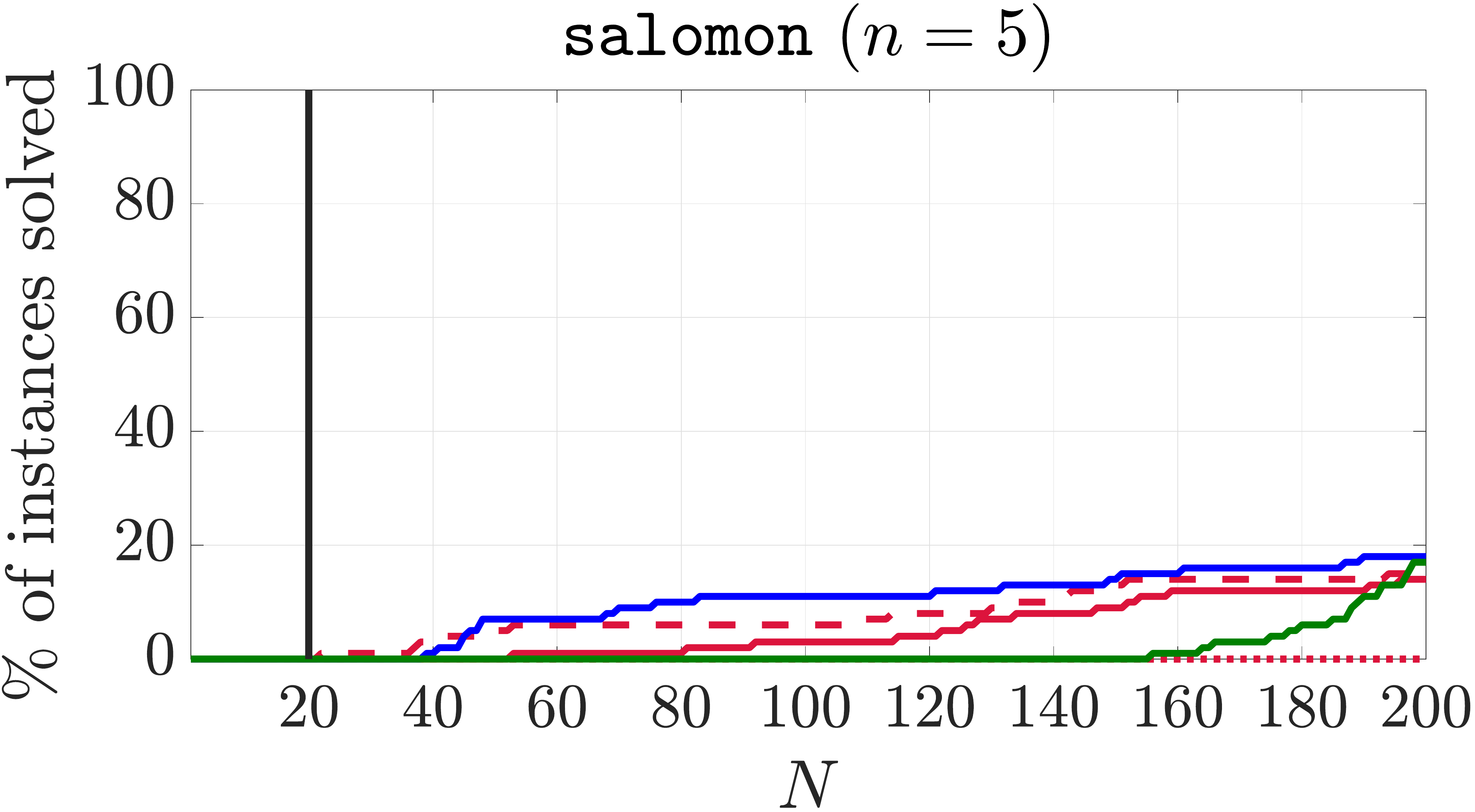}
    }

    \caption{
        \label{fig:performances_on_benchmarks_2}
        \figname{} \ref{fig:performances_on_benchmarks_1} cont'd.
    }
\end{figure}

\begin{sidewaystable}[!htb]
    \settablestretch
    \settablefontsize
    \caption{
        \label{tab:algorithm_comparison}
        Comparison between the considered preference-based optimization algorithms. Several indicators are taken into account: (a) \important{efficiency indicator}, i.e. number of samples required to solve the benchmark optimization problems to an accuracy of $0.95$ (median-wise), as defined in \eqref{eq:number_of_samples_for_relative_accuracy}, (b) \important{robustness indicator}, i.e. percentage of (instances of) problems solved and (c) average execution times (in seconds). The acronym \texttt{n.r.} stands for \quotes{not reached}. The best results are highlighted in bold font. We also report the average performances over all the considered benchmark optimization problems. Note that the averages for indicator (a) do not consider those benchmarks for which the algorithms did not reach the desired accuracy. 
    }
    \centering
    \begin{tabular}{cc|ccc|ccc|ccc|ccc|ccc}
        \multicolumn{8}{c}{}                  & \multicolumn{9}{c}{\GLISprmethod{}}\tabularnewline
        \cline{9-17} \cline{10-17} \cline{11-17} \cline{12-17} \cline{13-17} \cline{14-17} \cline{15-17} \cline{16-17} \cline{17-17}
        \multicolumn{2}{c|}{}                 & \multicolumn{3}{c|}{\GLISpmethod{}}                & \multicolumn{3}{c|}{\CGLISpmethod{}} & \multicolumn{3}{c|}{$\Delta_{cycle}=\langle0.95,0.7,0.35,0\rangle$} & \multicolumn{3}{c|}{$\Delta_{cycle}=\langle0.95\rangle$} & \multicolumn{3}{c}{$\Delta_{cycle}=\langle0\rangle$}\tabularnewline
        \hline
        \textbf{Benchmark}                    & \multirow{1}{*}{$n$}                               & (a)                                  & (b)                                                                 & (c)                                                      & (a)                                                                 & (b)                  & (c)                 & (a)                   & (b)                  & (c)                 & (a)               & (b)                  & (c)     & (a)           & (b)                    & (c)\tabularnewline
        \hline
        \bemporadGOP{}                        & $1$                                                & $\boldsymbol{8}$                     & $70\%$                                                              & $79.8$                                                   & $15$                                                                & $\boldsymbol{100\%}$ & $\boldsymbol{57.5}$ & $11$                  & $\boldsymbol{100\%}$ & $79.4$              & $\boldsymbol{8}$  & $68\%$               & $109.7$ & $18$          & $\boldsymbol{100\%}$   & $71.4$\tabularnewline
        \multirow{1}{*}{\gramacyandleeGOP{}}  & \multirow{1}{*}{$1$}                               & \texttt{n.r.}                        & $31\%$                                                              & $74.4$                                                   & $32$                                                                & $\boldsymbol{100\%}$ & $\boldsymbol{58.1}$ & $\boldsymbol{31}$     & $\boldsymbol{100\%}$ & $67.5$              & \texttt{n.r.}     & $30\%$               & $90.8$  & $36$          & $99\%$                 & $63.8$\tabularnewline
        \multirow{1}{*}{\ackleyGOP{}}         & \multirow{1}{*}{$2$}                               & \texttt{n.r.}                        & $24\%$                                                              & $92.1$                                                   & \texttt{n.r.}                                                       & $8\%$                & $\boldsymbol{70.3}$ & \texttt{n.r.}         & $\boldsymbol{28\%}$  & $127.9$             & \texttt{n.r.}     & $17\%$               & $122.4$ & \texttt{n.r.} & $1\%$                  & $121.1$\tabularnewline
        \multirow{1}{*}{\bukinsixGOP{}}       & \multirow{1}{*}{$2$}                               & $24$                                 & $\boldsymbol{99\%}$                                                 & $84.5$                                                   & $193$                                                               & $60\%$               & $\boldsymbol{73.3}$ & $58$                  & $78\%$               & $115.0$             & $\boldsymbol{23}$ & $98\%$               & $114.6$ & \texttt{n.r.} & $16\%$                 & $123.6$\tabularnewline
        \multirow{1}{*}{\levythirteenGOP{}}   & \multirow{1}{*}{$2$}                               & $\boldsymbol{9}$                     & $97\%$                                                              & $82.0$                                                   & $22$                                                                & $\boldsymbol{100\%}$ & $\boldsymbol{69.3}$ & $\boldsymbol{9}$      & $99\%$               & $126.8$             & $\boldsymbol{9}$  & $84\%$               & $118.2$ & $34$          & $91\%$                 & $120.6$\tabularnewline
        \multirow{1}{*}{\adjimanGOP{}}        & \multirow{1}{*}{$2$}                               & $\boldsymbol{11}$                    & $97\%$                                                              & $105.4$                                                  & $15$                                                                & $\boldsymbol{100\%}$ & $\boldsymbol{71.6}$ & $12$                  & $\boldsymbol{100\%}$ & $146.1$             & $12$              & $\boldsymbol{100\%}$ & $124.0$ & $24$          & $99\%$                 & $117.5$\tabularnewline
        \multirow{1}{*}{\rosenbrockGOP{}}     & \multirow{1}{*}{$5$}                               & $\boldsymbol{21}$                    & $\boldsymbol{100\%}$                                                & $133.3$                                                  & $26$                                                                & $\boldsymbol{100\%}$ & $107.1$             & $\boldsymbol{21}$     & $\boldsymbol{100\%}$ & $190.2$             & $\boldsymbol{21}$ & $\boldsymbol{100\%}$ & $222.3$ & \texttt{n.r.} & $41\%$                 & $\boldsymbol{100.2}$\tabularnewline
        \multirow{1}{*}{\steptwoGOP{}}        & \multirow{1}{*}{$5$}                               & $\boldsymbol{22}$                    & $\boldsymbol{100\%}$                                                & $148.1$                                                  & $100$                                                               & $\boldsymbol{100\%}$ & $104.2$             & $\boldsymbol{22}$     & $\boldsymbol{100\%}$ & $190.6$             & $\boldsymbol{22}$ & $\boldsymbol{100\%}$ & $216.7$ & \texttt{n.r.} & $3\%$                  & $\boldsymbol{98.9}$\tabularnewline
        \salomonGOP{}                         & $5$                                                & \texttt{n.r.}                        & $\boldsymbol{18\%}$                                                 & $137.7$                                                  & \texttt{n.r.}                                                       & $17\%$               & $103.5$             & \texttt{n.r.}         & $14\%$               & $183.4$             & \texttt{n.r.}     & $16\%$               & $219.1$ & \texttt{n.r.} & $0\%$                  & $\boldsymbol{100.8}$\tabularnewline
        \hline
        \multicolumn{2}{c|}{\textbf{Average}} & $\boldsymbol{15.8}$                                & $70.7\%$                             & $104.1$                                                             & $57.6$                                                   & $76.1\%$                                                            & $\boldsymbol{79.4}$  & $23.4$              & $\boldsymbol{79.8\%}$ & $136.3$              & $\boldsymbol{15.8}$ & $68.1\%$          & $148.6$              & $28.0$  & $56.3\%$      & $102.0$\tabularnewline
    \end{tabular}
\end{sidewaystable}
\appendix
\section{Additional proofs}
\label{sec:additional_proofs}
\paragraph*{Proof of Lemma \ref{lemma:Gradient_of_IDW_distance_function}}
The \IDW{} distance function in \eqref{eq:Inverse_distance_weighting_distance_function} is differentiable at any $\boldsymbol{x} \in \mathbb{R}^{n}$ (see Proposition \ref{prop:Differentiability_of_IDW_distance_function}); thus, its gradient $\nabla _{\boldsymbol{x}} z_N\left(\boldsymbol{x}\right)$ is defined everywhere and can be computed by repeatedly applying the chain rule.

Consider the case $\boldsymbol{x} \in \mathbb{R}^n \setminus \mathcal{X}$. First of all, it is easy to prove that the \IDW{} function $w_{i}\left(\boldsymbol{x}\right)$ in \eqref{eq:Inverse_distance_weighting_function} is differentiable at any $\boldsymbol{x} \in \mathbb{R}^{n}\setminus\left\{ \boldsymbol{x}_{i}\right\}$. That is because the squared Euclidean norm $\euclideannorm{\boldsymbol{x}-\boldsymbol{x}_{i}}^2$ is differentiable everywhere and also:
\begin{equation*}
    \euclideannorm{\boldsymbol{x}-\boldsymbol{x}_{i}}^2 \neq 0, \quad \forall \boldsymbol{x} \in \mathbb{R}^{n}\setminus\left\{\boldsymbol{x}_{i}\right\}.
\end{equation*}
Therefore, due to the reciprocal rule, the \IDW{} function is differentiable at any $\boldsymbol{x} \in \mathbb{R}^{n}\setminus\left\{\boldsymbol{x}_{i}\right\}$. In order to compute the gradient of $w_{i}\left(\boldsymbol{x}\right)$ in \eqref{eq:Inverse_distance_weighting_function}, recall that the gradient of the squared Euclidean norm is equal to:
\begin{equation}
    \label{eq:Gradient_of_Euclidean_Norm}
    \nabla_{\boldsymbol{x}}\euclideannorm{\boldsymbol{x}-\boldsymbol{x}_{i}}^2 = 2\cdot\left(\boldsymbol{x}-\boldsymbol{x}_{i}\right), \quad \forall \boldsymbol{x} \in \mathbb{R}^n.
\end{equation}
Using \eqref{eq:Gradient_of_Euclidean_Norm}, it is easy to prove that:
\begin{align}
    \label{eq:Gradient_of_IDW_function}
    \nabla_{\boldsymbol{x}}w_{i}\left(\boldsymbol{x}\right) & =-2\cdot\frac{\boldsymbol{x}-\boldsymbol{x}_{i}}{\euclideannorm{\boldsymbol{x}-\boldsymbol{x}_{i}}^{4}} \nonumber                                                                             \\
                                                            & =-2\cdot\left(\boldsymbol{x}-\boldsymbol{x}_{i}\right)\cdot w_{i}\left(\boldsymbol{x}\right)^{2}, \quad \forall \boldsymbol{x} \in \mathbb{R}^{n}\setminus\left\{ \boldsymbol{x}_{i}\right\}.
\end{align}
Now, let us consider the argument of the $\arctan \left(\cdot\right)$ function in $z_N\left(\boldsymbol{x}\right)$, which is (see \eqref{eq:Inverse_distance_weighting_distance_function}):
\begin{equation*}
    \label{eq:Reciprocal_of_sum_of_IDW_functions}
    h\left(\boldsymbol{x}\right) = \frac{1}{\sum_{i=1}^{N}w_{i}\left(\boldsymbol{x}\right)}, \quad \forall \boldsymbol{x} \in \mathbb{R}^n \setminus \mathcal{X}.
\end{equation*}
We can find the gradient of $h\left(\boldsymbol{x}\right)$ by applying the chain rule in combination with \eqref{eq:Gradient_of_IDW_function}:
\begin{equation}
    \label{eq:Gradient_of_reciprocal_of_sum_of_IDW_functions}
    \nabla_{\boldsymbol{x}}h\left(\boldsymbol{x}\right) =2\cdot\frac{\sum_{i=1}^{N}\left(\boldsymbol{x}-\boldsymbol{x}_{i}\right)\cdot w_{i}\left(\boldsymbol{x}\right)^{2}}{\left[\sum_{i=1}^{N}w_{i}\left(\boldsymbol{x}\right)\right]^{2}},
    \quad \forall \boldsymbol{x} \in \mathbb{R}^{n} \setminus \mathcal{X}.
\end{equation}
Finally, using \eqref{eq:Gradient_of_reciprocal_of_sum_of_IDW_functions} and applying the chain rule one last time, we can compute the gradient of the \IDW{} distance function in \eqref{eq:Inverse_distance_weighting_distance_function}:
\begin{align}
    \label{eq:Inverse_distance_weighting_distance_function_gradient_partial_1}
    \nabla_{\boldsymbol{x}}z_N\left(\boldsymbol{x}\right) & =\frac{d}{dt} \left[-\frac{2}{\pi}\cdot \arctan\left(t\right)\right]\evaluatedat{t=h\left(\boldsymbol{x}\right)}\cdot\nabla_{\boldsymbol{x}}h\left(\boldsymbol{x}\right) \nonumber                                                                                                            \\
                                                          & =-\frac{4}{\pi}\cdot\frac{1}{1+\left[\sum_{i=1}^{N}w_{i}\left(\boldsymbol{x}\right)\right]^{-2}}\cdot\frac{\sum_{i=1}^{N}\left(\boldsymbol{x}-\boldsymbol{x}_{i}\right)\cdot w_{i}\left(\boldsymbol{x}\right)^{2}}{\left[\sum_{i=1}^{N}w_{i}\left(\boldsymbol{x}\right)\right]^{2}} \nonumber \\                                        & =-\frac{4}{\pi}\cdot\frac{\sum_{i=1}^{N}\left(\boldsymbol{x}-\boldsymbol{x}_{i}\right)\cdot w_{i}\left(\boldsymbol{x}\right)^{2}}{1+\left[\sum_{i=1}^{N}w_{i}\left(\boldsymbol{x}\right)\right]^{2}}, \quad
    \forall \boldsymbol{x} \in \mathbb{R}^{n} \setminus \mathcal{X}.
\end{align}
Now, let us consider the case $\boldsymbol{x}_i \in \mathcal{X}$. In \cite{Bemporad2020}, the authors have proven that all the partial derivatives of $z_N\left(\boldsymbol{x}\right)$ in \eqref{eq:Inverse_distance_weighting_distance_function} are zero at each $\boldsymbol{x}_i \in \mathcal{X}$, i.e.
\begin{equation}
    \label{eq:Inverse_distance_weighting_distance_function_gradient_partial_2}
    \nabla_{\boldsymbol{x}}z_N\left(\boldsymbol{x}_i\right) = \boldsymbol{0}_{n}, \quad \forall \boldsymbol{x}_i \in \mathcal{X}.
\end{equation}
Lastly, combining \eqref{eq:Inverse_distance_weighting_distance_function_gradient_partial_1} and \eqref{eq:Inverse_distance_weighting_distance_function_gradient_partial_2}, we obtain the expression for the gradient of the \IDW{} distance function $\forall \boldsymbol{x} \in \mathbb{R}^{n}$, as reported in \eqref{eq:Inverse_distance_weighting_distance_function_gradient}.
\qedwhite{}


\end{document}